\documentclass[english,10pt]{amsart}

\usepackage[english]{babel}

\usepackage{amscd,amssymb,amsfonts,amsmath}
\usepackage{tikz-cd}

\usepackage{bm}
\usepackage{graphics}
\usepackage{epsfig}
\usepackage{mathrsfs}
\usepackage{xcolor}

\DeclareMathAlphabet{\mathscrbf}{OMS}{mdugm}{b}{n}

\definecolor{violet}{rgb}{0.0,0.2,0.7}
\definecolor{rouge2}{rgb}{0.8,0.0,0.2}
\usepackage{hyperref}
\hypersetup{
    unicode=false,          
    pdftoolbar=true,        
    pdfmenubar=true,        
    pdffitwindow=false,     
    pdfstartview={FitH},    
    pdftitle={},    
    pdfauthor={},     
    colorlinks=true,       
   linkcolor=violet,          
    citecolor=rouge2,        
    filecolor=black,      
    urlcolor=cyan}           

\usepackage[text={6.7in,9.2in},centering]{geometry}

\makeatletter
\renewcommand\subsection{\@startsection{subsection}{2}%
  \z@{.5\linespacing\@plus.7\linespacing}{-.5em}%
  {\normalfont\sffamily}}  
\makeatother

\newcommand{\nablaBott}{\nabla_{\! \textup{B}}}

\renewcommand{\phi}{\varphi}

\newcommand{\wt}{\widetilde}

\renewcommand{\le}{\leqslant}
\renewcommand{\ge}{\geqslant}

\newcommand{\cF}{\mathcal{F}}
\newcommand{\cG}{\mathcal{G}}
\newcommand{\cH}{\mathcal{H}}

\newcommand{\cL}{\mathcal{L}}

\newcommand{\sA}{\mathscr{A}}

\newcommand{\sD}{\mathscr{D}}
\newcommand{\sE}{\mathscr{E}}
\newcommand{\sF}{\mathscr{F}}
\newcommand{\sG}{\mathscr{G}}

\newcommand{\sL}{\mathscr{L}}
\newcommand{\sM}{\mathscr{M}}
\newcommand{\sN}{\mathscr{N}}
\newcommand{\sO}{\mathscr{O}}

\newcommand{\dd}{\textup{d}}

\newtheorem{thm}{Theorem}[section]

\newtheorem{question}[thm]{Question}
\newtheorem{lemma}[thm]{Lemma}
\newtheorem{cor}[thm]{Corollary}

\newtheorem{prop}[thm]{Proposition}

\newtheorem*{thm*}{Theorem}
\theoremstyle{definition}
\newtheorem{defn}[thm]{Definition}

\newtheorem{defn-thm}[thm]{Definition-Theorem} 
\newtheorem{defn-lemma}[thm]{Definition-Lemma}

\theoremstyle{remark}

\newtheorem{fact}[thm]{Fact}

\newtheorem*{not-and-def}{Notation and definitions}
 
\newtheorem{rem}[thm]{Remark}

\newtheorem{exmp}[thm]{Example}

\numberwithin{equation}{section}

\def\factor#1.#2.{\left. \raise 2pt\hbox{$#1$} \right/\hskip -2pt\raise -2pt\hbox{$#2$}}

\begin{document} 

\title[Transversely projectively flat foliations]{Transversely projectively flat foliations}

\author{St\'ephane \textsc{Druel}}

\address{St\'ephane Druel: CNRS, Université Claude Bernard Lyon 1, UMR 5208, Institut Camille Jordan, F-69622 Villeurbanne, France} 

\email{stephane.druel@math.cnrs.fr}

\subjclass[2010]{37F75, 32Q30, 14E30, 53B10}

\begin{abstract}
We study transversely projectively flat regular foliations on compact Kähler manifolds.
\end{abstract}

\maketitle

{\small\tableofcontents}

\section{Introduction}
Let $X$ be a compact Kähler manifold with flat tangent bundle. As a classical consequence of the existence of a 
K\"ahler-Einstein metric on $X$, proved independently by Aubin (\cite{aubin_KE}) and Yau (\cite{Yau_KE}), $X$ is then a finite \'etale quotient of a complex torus (see \cite[Chapter IV Corollary 4.15]{kobayashi_diff_geom_vb}). 

\medskip

In this article, we partially extend the above result to transversely projectively flat regular foliations. We refer to Section \ref{section:definition} for this notion. In a nutshell, a regular foliation $\cF$ on $X$ is transversely flat (resp. transversely projectively flat) if the tangent bundle of the space of leaves of $\cF$ is 
flat (resp. projectively flat). The simplest examples 	are linear foliations on complex tori, and foliations
on compact Kähler manifolds defined by pointwise linearly independent holomorphic $1$-forms with values in flat line bundles. Also, the pull-back of a transversely projectively flat foliation  via a submersion is transversely projectively flat as well.

\medskip

First, we address transversely projectively flat foliations with compact leaves.

\begin{thm}\label{thm_intro:compact_leaves}
Let $X$ be a compact K\"ahler manifold, and let $\cF$ be regular foliation of codimension at least $2$ on $X$. Suppose that $\cF$ is transversely projectively flat with compact leaves. Then there exist a complex torus $T$ and a smooth morphism $f \colon Y \to T$ as well as a finite \'etale cover $\gamma\colon Y \to X$ such that $\gamma^{-1}\cF$ is induced by $f$. 
\end{thm}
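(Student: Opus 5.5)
Since $\cF$ is regular with compact leaves on a compact Kähler manifold, I will first apply the Holmann / Ehresmann structure theory for compact-leaf foliations (in the Kähler setting, via Campana and Fujiki: bounded leaf volume gives a proper family of leaves). This yields a proper map $\pi\colon X\to Z$ onto the leaf space, where $Z$ is a compact Kähler orbifold (a normal analytic space with quotient singularities) and the general leaf has trivial holonomy. After a finite étale cover of $X$ (which I expect to justify with the orbifold structure and a Selberg-type lemma applied to finite holonomy groups), $Z$ becomes an orbifold whose orbifold tangent bundle $T_Z$ pulls back to $N_\cF$. The transverse projective flatness of $\cF$ then says that $T_Z$, in the orbifold sense, is projectively flat: it comes from a representation $\rho\colon\pi_1^{\mathrm{orb}}(Z)\to \mathrm{PGL}(q,\mathbb{C})$ with $q=\operatorname{codim}\cF\ge 2$.

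The heart of the argument is to prove that such a compact Kähler orbifold $Z$ with projectively flat tangent bundle is, up to a finite quasi-étale cover, a torus; the conclusion stated in the Theorem then follows. My strategy is a Miyaoka–Yau equality analysis. Projective flatness forces the equality $c_2(T_Z)=\frac{q-1}{2q}c_1(T_Z)^2$ in orbifold cohomology, and it also forces $T_Z$ to be semistable with respect to every Kähler class. I will then split according to $K_Z$. If $K_Z$ is not pseudoeffective, uniruledness (Brunella or Boucksom–Demailly–Păun–Peternell in the Kähler orbifold setting) gives a rational curve. Restricting the projectively flat bundle to its normalization $\mathbb{P}^1$ makes the bundle $\sO(a)^{\oplus q}$, which contradicts the fact that $T_{\mathbb{P}^1}\subset T_Z|_{\mathbb{P}^1}$ has degree $2$ while the quotient must be semipositive with equal slopes. (I still need to check how this interacts with the orbifold points.) If $K_Z$ is pseudoeffective, I will use abundance-type results in the semistable setting. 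The case $K_Z$ big is excluded by the equality case of Miyaoka–Yau: that equality gives a ball quotient with $T_Z$ projectively flat, which holds only for $q=1$, and we have $q\ge2$. The intermediate Kodaira dimensions are to be ruled out by the Beauville–Bogomolov decomposition or by a flat-connection argument. That leaves $c_1=0$, where the equality $c_2=0$ together with the orbifold Beauville–Bogomolov decomposition shows that $Z$ is a torus quotient.

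An alternative route that I would try first is to argue on $X$ directly. Take a Kähler class and use Bott's partial connection on $N_\cF$ combined with the projective flat structure. This shows that the pulled-back $\mathbb{P}(N_\cF)$ is flat, and hence that $N_\cF\otimes L$ is flat for a suitable line bundle $L$ after passing to a finite cover. On the leaf space this means $T_Z\otimes L_Z$ is flat. I would then prove $L_Z$ is torsion by considering $\det$, so that $K_Z$ is numerically a multiple of $c_1(L_Z)$, and exclude the nonzero cases through the above positivity arguments.

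Finally, once $Z'\to Z$ is a quasi-étale cover by a torus $T$, I will take $Y$ to be the normalization of $X\times_Z T$. Because the general leaves have trivial holonomy and $T\to Z$ absorbs the orbifold structure, $Y\to X$ is étale and $f\colon Y\to T$ is smooth with fibers equal to the leaves. The main obstacle is the classification step: orbifold Kähler $Z$ with projectively flat $T_Z$ must be a torus quotient. Within that step the hardest parts are the $K_Z$ pseudoeffective case with intermediate Kodaira dimension and the transfer of the Kähler (non-projective) hypotheses to the orbifold setting.
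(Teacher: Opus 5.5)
\textbf{There is a genuine gap.} Your proposal never proves the fact on which everything hinges: $c_1(\sN)=0$, or equivalently that the orbifold canonical class $K_Z+\sum_i\frac{m_i-1}{m_i}B_i$ of the leaf space is numerically trivial. The case analysis on $K_Z$ that you put in its place does not close.

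\begin{itemize}
\item In the pseudoeffective case you appeal to ``abundance-type results''. These are open, a fortiori for compact K\"ahler orbifolds, where even nonvanishing is unknown.
\item For intermediate Kodaira dimension, ``the Beauville--Bogomolov decomposition or a flat-connection argument'' is not an argument, since Beauville--Bogomolov presupposes $c_1=0$.
\item Concretely, take $Z=C\times E$ with $g(C)\ge 2$ and $E$ an elliptic curve. It satisfies $c_2(Z)=\frac{q-1}{2q}c_1(Z)^2$ (both sides vanish) and has semiample $K_Z$ of Kodaira dimension $1$. The only item in your plan that would exclude it is the asserted semistability of $T_Z$. That is not available a priori: projectively flat bundles need not be semistable. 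For example, the flat bundle underlying the uniformizing representation of a curve of genus $\ge 2$ contains a theta characteristic of positive degree.
\item In the non-pseudoeffective case, $\nu^*T_Z\cong\sO_{\mathbb{P}^1}(a)^{\oplus q}$ together with a nonzero map $T_{\mathbb{P}^1}\to\nu^*T_Z$ only gives $a\ge 2$, which is not yet a contradiction.
\end{itemize}

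Keep in mind that the statement fails for $q=1$ precisely because of fibrations onto curves of genus $\ge 2$, i.e.\ because $c_1$ of the leaf space need not vanish. A correct proof must use $q\ge 2$ exactly to rule this out, and your plan has no mechanism that does so.

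The missing idea is Theorem~\ref{thm:vanishing}: for $q\ge 2$, a flat projective connection on $(\sN,\nablaBott)$ upgrades to an honest holomorphic connection on $\sN$.
\begin{itemize}
\item Take local frames with $\textbf{a}_i=f_{ij}M_{ij}\cdot\textbf{a}_j$ and $\nablaBott^\vee(\textbf{a}_i)=\beta_i\otimes\textbf{a}_i$.
\item Involutivity lets one write $\dd A^i_k=\eta_i\wedge A^i_k$ for the $(q-1)$-forms $A^i_k=\alpha^i_1\wedge\cdots\wedge\widehat{\alpha^i_k}\wedge\cdots\wedge\alpha^i_q$, with a single $1$-form $\eta_i$.
\item Comparing on overlaps gives $(q-1)\,\dd f_{ij}/f_{ij}=\eta_i-\eta_j$.
\item Dividing by $q-1$, the local flat connections twisted by $\frac{1}{q-1}\eta_i$ glue, so $c_1(\sN)=0$.
\end{itemize}

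After that the paper's route is short:
\begin{itemize}
\item $K_Y+\sum_i\frac{m_i-1}{m_i}B_i\equiv 0$ on the leaf space, and it is torsion by Cao--Guenancia--P\u{a}un.
\item The index-one cover of this pair makes the normalized fibre product \'etale over $X$ by purity. Your last step needs this too: a merely quasi-\'etale cover $T\to Z$ does not absorb multiple fibres along divisors.
\item The resulting klt space has $K\sim_{\mathbb{Q}}0$ and tangent bundle projectively flat off a codimension-two subset. It is a quasi-\'etale torus quotient by Theorem~\ref{thm:projectively_flat_spaces}, which rests on \cite{CGGN22}.
\end{itemize}
No analysis according to the sign of $K_Z$ is needed. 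Your alternative route stalls at the same place: proving that $L_Z$ is torsion ``by positivity arguments'' is equivalent to $c_1(\sN)=0$, and that is exactly what the holomorphic connection supplies.
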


The proof of Theorem \ref{thm_intro:compact_leaves} above makes use of the following characterization of torus quotients. Theorem \ref{thm_intro:projectively_flat_spaces} below extends \cite[Theorem D]{CGGN22} to projectively flat compact K\"{a}hler varieties.

\begin{thm}\label{thm_intro:projectively_flat_spaces}
Let X be a compact K\"{a}hler variety of dimension at least $2$ with klt singularities. Suppose that $T_{\textup{reg}}$ is projectively flat. Then there exists a complex torus $T$, and a quasi-\'etale cover $T \to X$.
\end{thm}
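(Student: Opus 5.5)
The strategy is to reduce to the flat case, i.e.\ \cite[Theorem D]{CGGN22}, which gives the conclusion when $T_{X_{\textup{reg}}}$ is flat. Concretely, we want to show that projective flatness forces $c_1(X)=0$ in a suitable sense; the projectivization of the tangent sheaf then becomes a flat bundle twisted by a line bundle that can be untwisted. Write $n=\dim X\ge 2$.

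\emph{Step 1: the flat structure on $X_{\textup{reg}}$.} Projective flatness of $T_{X_{\textup{reg}}}$ gives a representation $\rho\colon\pi_1(X_{\textup{reg}})\to \mathrm{PGL}_n(\mathbb C)$ together with the corresponding flat $\mathbb P^{n-1}$-bundle $\mathbb P(T_{X_{\textup{reg}}})$. Since $X$ is klt, we can pass to a quasi-étale cover that is maximally quasi-étale (Greb--Kebekus--Peternell, and its Kähler version by Claudon--Graf--Guenancia--Naumann). On such a cover, representations of $\pi_1(X_{\textup{reg}})$ extend to $\pi_1(X)$. The Zariski closure of the image of $\rho$ is then controlled: after a further finite cover its identity component acts through a reductive or unipotent group, and the plan is to lift $\rho$ to $\mathrm{GL}_n$ after such covers. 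This gives $T_{X_{\textup{reg}}}\cong E\otimes L$ with $E$ flat and $L$ a line bundle on $X_{\textup{reg}}$, and $L$ extends to a reflexive rank-one sheaf on $X$.

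\emph{Step 2: $K_X\equiv 0$.} From $\det T_{X_{\textup{reg}}} = \det E\otimes L^{n}$ we get $c_1(X) = n\,c_1(L)$ numerically, since $\det E$ is flat. The projective flatness of $T_X$ means equality holds in the Bogomolov--Gieseker-type inequality, namely $\bigl(2n c_2 - (n-1)c_1^2\bigr)\cdot\omega^{n-2}=0$. Now split according to the sign of $K_X$. If $K_X$ is pseudoeffective, the klt Kähler generic semipositivity result (Campana--Păun, extended to the Kähler case by Guenancia and collaborators) together with the slope stability of the flat-twisted sheaf $T_X$ forces $K_X\equiv 0$: indeed $T_X$ is semistable of slope $-K_X\cdot\omega^{n-1}/n$, and $\Omega^{[1]}_X$ is generically nef. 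If $K_X$ is not pseudoeffective, then $X$ is uniruled. Take a general rational curve $C$ in a covering family; then $T_X|_C\cong L|_C^{\oplus n}$, because flat bundles are trivial on rational curves after normalization. On the other hand $T_C\subset T_X|_C$ has degree $2$, which forces $\deg L|_C\ge 2$. This degree must be compatible with all of $T_X|_C$ being positive of equal degree, so $X$ would be a projective space or a quotient of one. A projective space cannot carry a projectively flat tangent bundle in the regular sense, because $\mathbb P^n$ is simply connected while the Euler sequence is non-split; this rules out the uniruled case for $n\ge2$. The mixed case, where $X$ is uniruled but not rationally connected, also has to be excluded. For this I would run the MRC fibration: the projectively flat splitting restricted to the fibres forces them to be projective spaces, which contradicts the fact that flat-twisted bundles have equal slopes in the vertical and horizontal directions.

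\emph{Step 3: conclusion.} With $K_X\equiv 0$, after a quasi-étale index-one cover we may take $c_1(L)=0$. Then $L$ is numerically trivial, hence flat on a Kähler klt space by a Kawamata--Beauville-type argument. It follows that $T_{X_{\textup{reg}}}$ is flat, and \cite[Theorem D]{CGGN22} yields the torus cover. I expect Step 2, and in particular excluding the uniruled and non-pseudoeffective case in the singular Kähler setting, to be the main obstacle. My first attempt there would be to use the Bogomolov--Gieseker equality on a maximally quasi-étale cover together with the decomposition theorem for klt spaces with $K\equiv 0$ once pseudoeffectivity is established.
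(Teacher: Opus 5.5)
\textbf{There is a genuine gap in Step~2, and Step~2 is where the whole difficulty sits.}

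\emph{Pseudoeffective case.} You invoke semistability of $T_X$, but projective flatness does not give this when the monodromy is not unitary. On a curve of genus $g\ge 2$, the flat bundle of the uniformizing representation is the non-split extension of $K^{-1/2}$ by $K^{1/2}$, which is unstable; products give examples in any dimension. Even granting semistability, it cannot combine with generic nefness of $\Omega^{[1]}_X$ to force $K_X\equiv 0$: every canonically polarized manifold has $K_X$-semistable tangent bundle and generically nef cotangent bundle.

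\emph{Non-pseudoeffective case.} Here you would need three things you do not supply: a uniruledness criterion for klt K\"ahler spaces; rational curves of a covering family that avoid $X_{\textup{sing}}$, which is not automatic (think of the rulings of a quadric cone); and a characterization of $\mathbb{P}^n$ in the singular K\"ahler setting. The MRC paragraph is not an argument.

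\emph{The missing idea.} No case distinction is needed. By Theorem~\ref{thm:vanishing}, applied to the foliation by points on $X_{\textup{reg}}$, a projectively flat tangent bundle in dimension $n\ge 2$ automatically admits a holomorphic connection. In this case the proof is a short computation. Take local coframes with $\mathbf{a}_i=f_{ij}M_{ij}\mathbf{a}_j$ and $M_{ij}$ locally constant. The $(n-1)$-forms $A^i_k=\alpha^i_1\wedge\cdots\wedge\widehat{\alpha^i_k}\wedge\cdots\wedge\alpha^i_n$ determine a unique $1$-form $\eta_i$ with $\mathrm{d}A^i_k=\eta_i\wedge A^i_k$ for all $k$. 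Since $\mathbf{A}_i=f_{ij}^{\,n-1}N_{ij}\mathbf{A}_j$ with $N_{ij}$ locally constant, uniqueness gives $(n-1)\,\mathrm{d}f_{ij}/f_{ij}=\eta_i-\eta_j$. Hence the connections $\mathrm{d}_i+\frac{1}{n-1}\eta_i$ on $\Omega^1$ glue, where $\mathrm{d}_i$ is the trivial connection in the coframe $\mathbf{a}_i$. This gives $c_1(X)=0$ in one stroke, and it is exactly where $n\ge 2$ is used.

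\textbf{Steps~1 and~3, and how the paper concludes instead.} You route the conclusion through flatness of $T_{X_{\textup{reg}}}$, via a lift of $\rho$ to $\mathrm{GL}_n$ that you only announce (``the plan is to lift''). Nothing you say kills the lifting obstruction in $H^2(\pi_1,\mu_n)$ on a finite cover. The paper itself only obtains flatness from projective flatness and $c_1=0$ after a generically finite base change (Lemma~\ref{lemma:projectively_flat_versus_flat}), and leaves the general implication as a question. On the non-compact $X_{\textup{reg}}$ this is not a formality.

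The paper never proves flatness. It argues as follows.
\begin{enumerate}
\item It passes to a maximally quasi-\'etale cover. In the K\"ahler setting this is not off the shelf: the paper reruns \cite[Theorem 5.1]{CGGN22}, with Lemma~\ref{lemma:isolated_singularities} (isolated cyclic quotient singularities) replacing \cite[Proposition 5.5]{CGGN22}.
\item Once $c_1(X)=0$, \cite[Corollary 4.2]{CGGN22} gives an \'etale cover $T\times Z\to X$ with $\omega_Z\cong\sO_Z$ and $\tilde q(Z)=0$.
\item If $\dim T=0$, then \cite[Theorem 7.2]{CGGN22} makes the $\mathrm{PGL}_n$-monodromy of $\pi_1(Z)$ finite. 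After quasi-\'etale covers this gives $\Omega^{[1]}_Z\cong\sO_Z^{\oplus n}$, contradicting $\tilde q(Z)=0$.
\item So $\dim T\ge 1$. Then $Z$ is smooth, since $T\times Z$ has only isolated singularities, and may be taken simply connected by Beauville--Bogomolov.
\item The restriction $T_{T\times Z}|_{Z_t}$ is projectively flat with $c_1=0$ on a simply connected manifold, hence trivial. Thus $\Omega^1_{Z_t}$ is globally generated, and $q(Z_t)=0$ forces $Z$ to be a point.
\end{enumerate}
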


Finally, we describe the structure of transversely projectively flat foliations of dimension $1$ on complex projective manifold.

\begin{thm}\label{thm_intro:dimension_one}
Let $\cF$ be regular foliation of dimension $1$ and codimension at least $2$ on a complex projective manifold $X$. Suppose that $\cF$ is transversely projectively flat. Then one of the following holds.
\begin{enumerate}
\item The foliation $\cF$ is induced by a $\mathbb{P}^1$-bundle structure $f\colon X \to Y$ onto a finite \'etale quotient of an abelian variety.
\item There exists an abelian variety $A$ as well as a finite \'etale morphism $\gamma\colon A \to X$ such that $\gamma^{-1}\cF$ is a linear foliation.
\item There exist a smooth projective curve $C$ and an abelian variety $B$ as well as a finite \'etale mrophism $\gamma\colon C \times B \to X$ such that $\gamma^{-1}\cF$ induces a flat Ehresmann connection on the projection morphism $\textup{pr}_C \colon C \times B \to C$.
\end{enumerate}
\end{thm}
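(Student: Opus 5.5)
The plan is to split according to whether the canonical bundle $K_\cF$ of the foliation is pseudo-effective. Since $\cF$ is a regular foliation of rank one, $T_\cF$ is a line bundle and $N_\cF = T_X/T_\cF$ is projectively flat. First I compute the Chern classes. Projective flatness of $N_\cF$ (of rank $r\ge 2$) gives $c_2(N_\cF) = \frac{r-1}{2r}c_1(N_\cF)^2$. Baum--Bott/Bott vanishing for the regular foliation gives $c_1(N_\cF)^{r+1}=0$ in cohomology, and in fact all degree $>2r$ polynomials vanish.

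\textbf{Case $K_\cF$ not pseudo-effective.} By Miyaoka / Bogomolov--McQuillan (or Campana--P\u{a}un), $\cF$ is algebraic with rational leaves. Since $\cF$ is regular, the leaves are smooth rational curves, and $\cF$ is induced by a smooth morphism $f\colon X\to Y$ whose fibers are $\mathbb{P}^1$; here one uses that a regular foliation by rational curves has no holonomy, by Reeb stability. Then $f^*T_Y\cong N_\cF$ is projectively flat, so $T_Y$ is projectively flat. Theorem \ref{thm_intro:projectively_flat_spaces} (with $\dim Y\ge 2$) then shows that $Y$ is a finite étale quotient of an abelian variety. This gives (1).

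\textbf{Case $K_\cF$ pseudo-effective.} Write $K_X = K_\cF + \det N_\cF^*$. Projective flatness gives $N_\cF \cong E\otimes L$ with $E$ a flat projective bundle twisted appropriately. I would use the partial Bott connection to show that $N_\cF$ carries a flat structure along leaves. I aim to prove that $K_\cF$ is numerically trivial or that $c_1(N_\cF)$ is a multiple of $c_1(K_\cF)$, and then proceed as follows.

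(a) If $K_\cF\equiv 0$, then $T_\cF$ is torsion after an étale cover, so $\cF$ is given by a nowhere vanishing vector field. By Bott vanishing with $N_\cF$ projectively flat, $c_1(N_\cF)\equiv 0$, so $T_X$ is an extension of numerically flat bundles. Then $c_1(X)=c_2(X)=0$, and by Yau/Beauville, $X$ is étale covered by an abelian variety. Since $N_\cF$ becomes flat, the vector field is linear, which gives (2).

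(b) If $K_\cF$ is not numerically trivial but pseudo-effective, I use abundance-type results for rank-one foliations (McQuillan, Brunella). The regular foliation has numerical Kodaira dimension $1$, which forces a fibration onto a curve $C$ transverse to $\cF$ or tangent to it. Projective flatness should force the leaves to be transverse to the fibers of a fibration $X\to C$ with $K_\cF$ pulled back from $C$ of general type. After an étale cover, $X\cong C\times B$ with $\cF$ a flat Ehresmann connection, by a splitting theorem for regular foliations with numerically trivial normal bundle along fibers. This gives (3).

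The main obstacle is the pseudo-effective case with $K_\cF\not\equiv 0$. Pinning down the fibration and proving the product decomposition $C\times B$ requires showing that $N_\cF$ restricted to fibers is flat and that the fibers are étale quotients of abelian varieties. For this I would first try the Touzet/Pereira classification of rank-one regular foliations with pseudo-effective canonical bundle, combined with Theorem \ref{thm_intro:projectively_flat_spaces} applied to the fibers.
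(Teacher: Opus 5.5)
\paragraph{Verdict.} There is a genuine gap. Your case where $K_\cF$ is not pseudo-effective is sound: rational leaves, H\"oring/Reeb stability, descent of projective flatness to $T_Y$, then Theorem \ref{thm:projectively_flat_spaces} on $Y$ with $\dim Y=q\ge 2$. The gap is case (b), and it is structural rather than a matter of filling in details.

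\paragraph{Why case (b) cannot be completed as designed.} Nothing you use there distinguishes a \emph{transversely} projectively flat foliation from an arbitrary regular foliation whose normal bundle happens to be projectively flat.
\begin{itemize}
\item Bott vanishing and the flat partial Bott connection exist for every regular foliation.
\item For projectively flat $N_\cF$, all Chern classes are multiples of powers of $c_1(N_\cF)$. So Bott vanishing only gives $c_1(N_\cF)^{q+1}=0$, and your claim in (a) that it yields $c_1(N_\cF)\equiv 0$ is unjustified.
\end{itemize}
The example in the Remark following Theorem \ref{thm:vanishing} shows the route fails. On $A=E\times B$ there is a regular foliation by curves with $N_\cF\cong(\sL^\vee)^{\oplus n-1}$ projectively flat. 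Since $T_A$ is trivial, $K_\cF\equiv c_1(N_\cF)=(n-1)\,\textup{pr}_E^*c_1(\sM)$, which is nef and nonzero. So this foliation lies in your case (b) and even satisfies your intended alternative ``$c_1(N_\cF)$ is a multiple of $K_\cF$''. Yet none of (1)--(3) holds:
\begin{itemize}
\item (1) fails because $A$ has no rational curves.
\item (2) fails because a linear foliation has trivial normal bundle, contradicting $\gamma^*c_1(N_\cF)\neq 0$.
\item (3) fails because $C$ would have to be elliptic, forcing $K_{\gamma^{-1}\cF}\equiv\textup{pr}_C^*K_C\equiv 0$, again contradicting $\gamma^*c_1(N_\cF)\neq0$.
\end{itemize}
So ``projective flatness should force the leaves to be transverse to the fibres'' cannot be proved from your inputs. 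Abundance for rank-one foliations or a Touzet/Pereira-type classification will not rescue it. Relatedly, case (b) never uses $q\ge 2$. For $q=1$ the compatibility condition is vacuous, and the same example with $n=2$ is a counterexample. A minor point: a projectively flat bundle is not globally of the form $E\otimes L$ with $E$ flat in general; compare Lemma \ref{lemma:projectively_flat_versus_flat}.

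\paragraph{The missing idea.} It is Theorem \ref{thm:vanishing}. The compatibility of the projective connection with $\nablaBott$, applied to the integrable $(q-1)$-forms $A^i_k$, shows that the $1$-forms $\frac{1}{q-1}\eta_i$ glue the local flat connections. This division needs $q\ge2$. The result is a holomorphic connection on $\sN$, hence $c_1(\sN)=0$.

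\paragraph{How the paper then finishes.} Once $c_1(\sN)=0$:
\begin{itemize}
\item $K_\cF\equiv K_X$, and by Fact \ref{fact:chern_classes}, $c_2(X)=c_1(T_\cF)c_1(\sN)+c_2(\sN)=0$.
\item The paper splits on whether $K_X$ is nef; this is equivalent to your split once $c_1(\sN)=0$.
\item If $K_X$ is not nef, a rational curve exists. It is a leaf by Lemma \ref{lemma:simply_connected_subvarieties_tangent}, giving (1).
\item If $K_X$ is nef, then $c_2(X)=0$, and the Iwai--Matsumura--M\"uller classification makes $X$ \'etale-covered by an abelian variety or by an abelian scheme $Z\to C$ over a curve of genus $\ge 2$.
\item In the abelian-variety case, Proposition \ref{prop:av} gives (2).
\item In the abelian-scheme case, Lemma \ref{lemma:ab_schemes} forces $T_\cG\not\subseteq T_{Z/C}$. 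Then $K_\cG|_F\equiv K_Z|_F\equiv 0$ makes $\cG$ transverse along general fibres, so the family is isotrivial and $Z=C\times B$ after a further cover. Finally $K_\cG\equiv \textup{pr}_C^*K_C$ gives the flat Ehresmann connection in (3).
\end{itemize}
No abundance statement for foliations is needed.
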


Note that flat vector bundles are projectively flat with vanishing Chern classes but the converse statement does not hold in general. As an intermediate step, we obtain the following result.

\begin{thm}\label{thm_intro:vanishing}
Let $X$ be a complex manifold, and let $\cF$ be regular foliation of codimension $q \ge 2$ with normal bundle $\sN$. If $\cF$ is transversely projectively flat, then $\sN$ admits a connection. In particular, we have $c_i(\sN)=0 \in H^i(X,\Omega_X^i)$ for any index $i \ge 1$.
\end{thm}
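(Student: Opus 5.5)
The plan is to build a holomorphic connection on $\sN$ by gluing local flat connections. Transverse projective flatness makes these local connections agree on overlaps only up to a scalar $1$-form. I will then use the torsion of these local connections, which is meaningful because $\sN$ is locally the pull-back of the tangent bundle of a local leaf space, to normalise them canonically. This normalisation is where $q\ge 2$ enters. The vanishing of Chern classes then follows from the Atiyah class.

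\emph{Step 1: local frames.} Cover $X$ by open sets $U_i$ on which $\cF$ is defined by a submersion $f_i\colon U_i\to Y_i$ onto a $q$-dimensional manifold, so that $\sN|_{U_i}\cong f_i^*T_{Y_i}$. By definition, the projectively flat structure on the space of leaves gives local holomorphic frames $v^{(i)}_1,\dots,v^{(i)}_q$ of $T_{Y_i}$ (shrinking $U_i$ if needed). On overlaps, after identifying the $Y_i$ through the transversal holonomy, these frames differ by cocycles of the form $g_{ij}=\lambda_{ij}A_{ij}$, with $\lambda_{ij}$ a nowhere vanishing holomorphic function and $A_{ij}\in \mathrm{GL}_q(\mathbb{C})$ constant. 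Let $\nabla^{(i)}$ be the flat connection on $T_{Y_i}$ for which the $v^{(i)}_k$ are parallel. On overlaps one gets $\nabla^{(j)}-\nabla^{(i)}=\dd\log\lambda_{ij}\otimes \mathrm{Id}$, up to sign conventions. The obstruction to gluing is therefore the class of the cocycle $(\dd\log\lambda_{ij})$ in $H^1(X,\Omega^1)$, or more precisely in the corresponding group of basic forms.

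\emph{Step 2: canonical normalisation via torsion (the key step).} The torsion of $\nabla^{(i)}$ is $T^{(i)}(v_k,v_l)=-[v_k,v_l]$. This is a holomorphic section of $\mathrm{Hom}(\Lambda^2T_{Y_i},T_{Y_i})$, and $T^{(i)}$ and $T^{(j)}$ are both defined because $\nabla^{(i)}$ is a genuine connection on a tangent bundle. If $\nabla'=\nabla+\alpha\otimes\mathrm{Id}$, then
\[T'(u,w)=T(u,w)+\alpha(u)w-\alpha(w)u.\]
Hence $\mathrm{tr}\bigl(w\mapsto T'(u,w)\bigr)=\mathrm{tr}\bigl(w\mapsto T(u,w)\bigr)+(q-1)\alpha(u)$. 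I therefore set
\[\theta_i(u):=\tfrac{1}{q-1}\,\mathrm{tr}\bigl(w\mapsto T^{(i)}(u,w)\bigr),\]
which makes sense precisely because $q\ge 2$. Then $\theta_j-\theta_i=\dd\log\lambda_{ij}$, so the connections $\nabla^{(i)}-\theta_i\otimes\mathrm{Id}$ agree on overlaps. Pulling them back by $f_i$ gives connections on $\sN|_{U_i}=f_i^*T_{Y_i}$ that glue to a global holomorphic connection on $\sN$. I expect the main obstacle to be checking carefully that the identifications $\sN|_{U_i}\cong f_i^*T_{Y_i}$ are compatible with the cocycle $\lambda_{ij}A_{ij}$ and with the transversal changes of coordinates between the $Y_i$. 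This is what makes the torsion, and hence $\theta_i$, transform correctly, since torsion is natural under local biholomorphisms of leaf spaces. Equivalently, the local formula can be phrased on $X$ using the Bott partial connection, and one would verify that the result is independent of the choices.

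\emph{Step 3: Chern classes.} Once $\sN$ carries a holomorphic connection, its Atiyah class vanishes in $H^1(X,\Omega^1_X\otimes\mathrm{End}\,\sN)$. The classes $c_i(\sN)\in H^i(X,\Omega^i_X)$ are, up to normalisation, the invariant polynomials in the Atiyah class, so they all vanish for $i\ge1$.
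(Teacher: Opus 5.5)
Your proposal is correct and is essentially the paper's proof. For basic frames, the paper's $1$-form $\eta_i$ (characterised by $\dd A^i_k=\eta_i\wedge A^i_k$ with $A^i_k=\alpha^i_1\wedge\cdots\wedge\widehat{\alpha^i_k}\wedge\cdots\wedge\alpha^i_q$ built from the dual coframe) is exactly the pull-back of $(q-1)\theta_i$, i.e.\ of the trace of the torsion of $\nabla^{(i)}$, and its glued connection $\dd_i+\frac{1}{q-1}\eta_i\otimes\textup{id}$ on $\sN^\vee$ is the dual of your $\nabla^{(i)}-\theta_i\otimes\textup{Id}$. The only difference is that the paper stays on $X$ with conormal frames satisfying $\nablaBott^\vee(\alpha^i_k)=\beta_i\otimes\alpha^i_k$ instead of passing to local leaf spaces; your Step~1 tacitly rescales these to basic frames, which is legitimate locally since flatness of $\nablaBott$ gives $\dd_\cF\beta_i=0$.
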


\begin{rem}
In \cite{jahnke_radloff_13}, Jahnke and Radloff proved that finite étale quotients of abelian varieties are the only complex projective manifolds with numerically projectively flat tangent bundle. Greb, Kebekus, and Peternell then generalised the theorem of Jahnke and Radloff to projective varieties with klt singularities in \cite{GKP_proj_flat_JEP}. Theorem \ref{thm_intro:vanishing} makes it possible to considerably simplify the proof of the results mentioned above. 
\end{rem}

\subsection*{Acknowledgements} The author would like to thank Jorge V. Pereira for very helpful discussions.

\section{Notation, conventions and standard facts}\label{section:notation}

\subsection{Global conventions} Throughout the paper, all varieties are assumed to be defined over the field of complex numbers. We will freely switch between the algebraic and analytic context. 

\subsection{Foliations} 
Let $X$ be a complex manifold. By a \emph{regular foliation} $\cF$ of \emph{dimension} $p$ on $X$, we mean an involutive subbundle $T_\cF \subseteq T_X$ of rank $p$. Its \emph{codimension} is $q:=\dim X - p$. We will refer to $T_\cF$ as the \emph{tangent bundle} of $\cF$. The \emph{normal bundle} of $\cF$ is $\sN_\cF:=T_X / T_\cF$; by definition, it is a vector bundle of rank $q$. 

\medskip

A \emph{leaf} of $\cF$ is a maximal connected and immersed holomorphic submanifold $L \subseteq X$ such that
$T_L=T_\cF|_L$.

\medskip

The $q$-th wedge product of the inclusion $\sN^\vee\subseteq \Omega^1_X$ gives rise to a nowhere vanishing global section $\omega\in H^0(X,\Omega^{q}_X\otimes \det\sN)$. Moreover, $\omega$ is \emph{locally decomposable} and \emph{integrable}. To say that $\omega$ is locally decomposable means that, in a neighborhood of every point of $X$, $\omega$ decomposes as the wedge product of $q$ local $1$-forms $\omega=\omega_1\wedge\cdots\wedge\omega_q$.
To say that it is integrable means that for this local decomposition one has $\dd\omega_i\wedge \omega=0$ for every  $i\in\{1,\ldots,q\}$. The integrability condition for $\omega$ is equivalent to the condition that $T_\cF$ 
is closed under the Lie bracket. Conversely, let $\sL$ be a line bundle on $X$, and let $\omega\in H^0(X,\Omega^{q}_X\otimes \sL)$ be a nowhere vanishing global section. Suppose that $\omega$ is locally decomposable and integrable. Then the kernel of the morphism $T_X \to \Omega^{q-1}_X\otimes \sL$ given by the contraction with $\omega$ defines a regular foliation of codimension $q$ on $X$. These constructions are inverse of each other. 

\medskip

Let $f \colon X \to Y$ be a submersion between complex manifolds, and let $\cF$ be a regular foliation of codimension $q$ on $Y$. The \emph{pull-back} $f^{-1}\cF$ of $\cF$ via $f$ is the regular foliation of codimension $q$ on $X$ whose tangent bundle is $T_{f^{-1}\cF}:=\dd f^{-1}(T_\cF)$. As a consequence, we have $\sN_{f^{-1}\cF}\cong f^*\sN_\cF$.

\medskip

Let $f \colon X \to Y$ be a surjective submersion between complex manifolds, and let $\cF$ be a foliation of dimension $p$ on $X$. We say that $\cF$ is \emph{projectable} under $f$ if, for each point $y \in Y$, $\dd f_x(T_{\cF,x})$ is independent of the choice of $x \in f^{-1}(y)$ and $\dim \dd f_x (T_{\cF,x}) = p$. Then $\cF$ induces a regular foliation $\cG$ of dimension $p$ on $Y$ such that $T_\cF \cong f^*T_\cG$ (see \cite[Paragraph 2.7]{druel_bbcd2}).

\subsection{Connections}\label{paragraph:connections} 
Let $X$ be a complex manifold, and let $\sE$ be a vector bundle on $X$, \emph{i.e.} a locally a free $\sO_X$-module of finite rank. Let $r$ be the rank of $\sE$.

A \emph{connection} $\nabla$ on $\sE$ is a morphism of abelian sheaves
\[
\nabla\colon \sE \to \Omega_X^1\otimes\sE
\] 
satisfying 
\[
\nabla(fe)=f\nabla(e)+\dd f \otimes e
\] 
for any local sections $f$ of $\sO_X$ and $e$ of $\sE$ over some open subset of $X$. The \emph{curvature} of $\nabla$ is the $\sO_X$-linear map
\[
\textup{K}(\nabla):=\nabla^1\circ\nabla\colon \sE \to \Omega_X^2\otimes \sE
\]
of $\sO_X$-modules, where $\nabla^i$ denotes the extension of $\nabla$ to 
a morphism of abelian sheaves $\Omega_X^i \otimes \sE \to \Omega_X^{i+1} \otimes \sE$ given by 
\[
\nabla^i(\alpha\otimes e)= \dd \alpha \otimes e + (-1)^i \alpha \wedge \nabla(e)
\]
for any local sections $\alpha$ of $\Omega_X^i$ and $e$ of $\sE$ over some open subset of $X$. 
The connection is said to be \textit{flat} if $\textup{K}(\nabla)=0$.

\medskip

We say that $\sE$ is \emph{flat} if $\sE$ admits a flat connection.
 
\begin{fact}
The vector bundle $\sE$ is flat if and only if $\mathbb{V}(\sE^\vee):=\textup{Spec}_X\textup{S}^\bullet(\sE^\vee)$ comes from a representation $\pi_1(X) \to \textup{GL}(r,\mathbb{C})$.
\end{fact}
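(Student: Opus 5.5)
The plan is to prove the two implications separately, using the standard dictionary between flat connections and local systems. Throughout I assume $X$ is connected, so that it has a universal cover $\pi\colon \wt X \to X$ and $\pi_1(X)$ acts on $\wt X$ by deck transformations. Both directions go through the same intermediate statement: $\sE$ is flat if and only if it is trivialized by flat local frames, and these frames glue along constant transition matrices in $\textup{GL}(r,\mathbb{C})$.

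\emph{From a flat connection to a representation.} Let $\nabla$ be a flat connection on $\sE$, and set $\sE^\nabla:=\ker \nabla$.
\begin{enumerate}
\item Write $\nabla = \dd + A$ in a local frame, with $A$ a matrix of holomorphic $1$-forms. Flatness means $\dd A + A\wedge A = 0$. By the holomorphic Frobenius theorem, or equivalently by the existence of local solutions to the linear system $\dd s = -As$, every point has a neighborhood with a frame of $\nabla$-horizontal sections.
\item Any two horizontal frames differ by an invertible matrix $g$ with $\dd g = 0$, so $g$ is locally constant. Hence $\sE^\nabla$ is a local system of rank $r$, and $\sE \cong \sE^\nabla \otimes_{\mathbb{C}} \sO_X$.
\item A local system on $X$ is the same as a representation $\rho\colon \pi_1(X)\to \textup{GL}(r,\mathbb{C})$: pull back to $\wt X$, where it becomes trivial, and record the monodromy.
\item From this one gets $\mathbb{V}(\sE^\vee) \cong (\wt X \times \mathbb{C}^r)/\pi_1(X)$. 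Here $\pi_1(X)$ acts diagonally, by deck transformations on $\wt X$ and through $\rho$ on $\mathbb{C}^r$. This is exactly what "comes from a representation" means.
\end{enumerate}

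\emph{From a representation to a flat connection.} Suppose $\mathbb{V}(\sE^\vee)\cong (\wt X\times\mathbb{C}^r)/\pi_1(X)$ for some representation $\rho$.
\begin{enumerate}
\item The trivial bundle $\wt X\times \mathbb{C}^r$ carries the trivial connection $\dd$, which is flat.
\item The trivial connection is invariant under the diagonal action, because $\rho$ acts by constant matrices and constant matrices commute with $\dd$.
\item Therefore $\dd$ descends to a connection on $\sE$, and its curvature vanishes because it vanishes upstairs. So $\sE$ is flat.
\end{enumerate}

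\emph{Main obstacle.} The only non-formal step is the local existence of horizontal frames in the first implication. Everything else is bookkeeping: identifying $\mathbb{V}(\sE^\vee)$ with the total space of $\sE$, and matching the monodromy action with the quotient description. For local existence I would argue as follows. The vanishing of $\dd A + A\wedge A$ says precisely that the distribution on $X\times \mathbb{C}^r$ defined by the equations $\dd s + As = 0$ is involutive. Frobenius then gives integral submanifolds, namely the graphs of local horizontal sections. Taking $r$ initial conditions that are linearly independent at a point produces a horizontal frame near that point.
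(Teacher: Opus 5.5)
The paper states this as a standard fact and gives no proof of it. Your argument is the standard one and is correct: a flat connection gives local horizontal frames with locally constant transition matrices, hence a local system and its monodromy representation, and conversely the trivial connection on $\wt X \times \mathbb{C}^r$ is $\pi_1(X)$-invariant and descends. The step you single out, the local existence of horizontal frames differing by constant matrices, is the same mechanism the paper uses in its proof of Lemma~\ref{lemma:proj_flat_frames}.
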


\begin{fact}[{\cite[Proposition II.3.1(a)]{kobayashi_diff_geom_vb}}]\label{fact:connection_vanishing_chern_classes}
If $\sE$ is flat, then $c_i(\sE)=0$ for any index $i \ge 1$. 
\end{fact}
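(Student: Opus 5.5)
The statement is Fact~\ref{fact:connection_vanishing_chern_classes}. I plan to prove it through the Atiyah class. The Chern classes here live in $H^i(X,\Omega_X^i)$, which is the setting in which Theorem~\ref{thm_intro:vanishing} is stated. So I will use the Atiyah class $\textup{at}(\sE)\in H^1(X,\Omega^1_X\otimes \mathscr{E}nd(\sE))$ and the fact that $c_i(\sE)$ is, up to a nonzero constant, the image of $\textup{at}(\sE)$ under the $i$-th invariant polynomial.

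\emph{Step 1: $\textup{at}(\sE)=0$.} Take an open cover $\{U_\alpha\}$ of $X$ on which $\sE$ is trivial, and choose the trivial connection $\nabla_\alpha$ on each $U_\alpha$ coming from a frame. The differences $\nabla_\alpha-\nabla_\beta$ are $\sO_X$-linear, hence sections of $\Omega^1_X\otimes\mathscr{E}nd(\sE)$ over $U_\alpha\cap U_\beta$. They form a Čech $1$-cocycle, and its class is $\textup{at}(\sE)$ by definition. If $\sE$ admits a global holomorphic connection $\nabla$, then $\nabla_\alpha-\nabla_\beta=(\nabla_\alpha-\nabla)-(\nabla_\beta-\nabla)$ is a coboundary, so the class vanishes. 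Only the existence of a connection is used here, not flatness. This matches the way Theorem~\ref{thm_intro:vanishing} is deduced.

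\emph{Step 2: $c_i(\sE)=0$.} Write $\textup{tr}\Lambda^i$ for the $i$-th elementary invariant polynomial on $\mathfrak{gl}_r$. The class $c_i(\sE)\in H^i(X,\Omega^i_X)$ equals, up to the factor $(-1/2\pi\sqrt{-1})^i$, the image of $\textup{at}(\sE)^{\otimes i}$ under the cup product into $H^i(X,\Omega^i_X\otimes\mathscr{E}nd(\sE)^{\otimes i})$ followed by contraction with $\textup{tr}\Lambda^i$. This image is a polynomial of degree $i\ge 1$ in $\textup{at}(\sE)=0$, so it vanishes.

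\emph{Main obstacle.} The only nontrivial input is the identification of $c_i(\sE)$ with the Atiyah-class polynomial. I will cite Atiyah's original paper for it. Alternatively, one can choose a hermitian metric $h$ on $\sE$ with Chern connection $D_h$. Then $(\nabla_\alpha - D_h)$ gives a Dolbeault representative, the curvature $F_h$ has $(1,1)$-part $\bar\partial$-cohomologous to the Atiyah cocycle, and Chern--Weil yields the formula.

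For de Rham classes with $\nabla$ flat there is a more direct route. Chern--Weil theory applied to the smooth connection $\nabla+\bar\partial$ has curvature $\textup{K}(\nabla)=0$, so all Chern forms vanish identically. This is the argument of \cite[Proposition II.3.1(a)]{kobayashi_diff_geom_vb}.
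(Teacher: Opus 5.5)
Your argument is correct, but your main route differs from the one the paper relies on. The paper gives no proof and cites \cite[Proposition II.3.1(a)]{kobayashi_diff_geom_vb}. That argument is exactly your closing paragraph: the $C^\infty$ connection $\nabla+\bar\partial$ has curvature $\textup{K}(\nabla)=0$, so all Chern forms vanish identically. This gives $c_i(\sE)=0$ in $H^{2i}(X,\mathbb{R})$ on an arbitrary complex manifold. It also gives vanishing in $H^i(X,\Omega^i_X)$, since the Hodge class is computed by the $(i,i)$-part of the Chern--Weil form, and that part is zero here.

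Your Atiyah-class route (Steps 1--2) uses only the existence of a holomorphic connection, not flatness, and yields vanishing in $H^i(X,\Omega^i_X)$. This is precisely what the ``in particular'' clause of Theorem~\ref{thm:vanishing} needs. There the connection on $(\sN,\nablaBott)$ is not known to be flat, so the Fact as stated does not apply, and your argument is the right justification.

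What Steps 1--2 do not give on their own is the topological statement. Passing from $H^i(X,\Omega^i_X)$ to $H^{2i}(X,\mathbb{C})$ uses Hodge theory, so this route recovers Kobayashi's statement only for compact K\"ahler $X$. That case does cover the paper's use of the Fact in Lemma~\ref{lemma:projectively_flat_versus_flat}, where the base is projective. If the Fact is read in Kobayashi's sense on a general complex manifold, it is your closing Chern--Weil paragraph, not Steps 1--2, that proves it.
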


\medskip

We shall now briefly recall some basic operations on pairs $(\sE,\nabla)$ consisting of a vector bundle $\sE$ on $X$ and a connection $\nabla$ on $\sE$.

\medskip

Given connections induce new connections on associated vector bundles. The direct sum and tensor product of any two vector bundles equipped with connections (resp. flat connections) have natural connections (resp. flat connections). Also, the dual and the exterior powers of a vector bundle with a connection are equipped with natural connections (resp. flat connections) as well. 

\medskip

Let $f \colon X \to Y$ be a holomorphic map between complex manifolds, and let $\sE$ be a vector bundle on $Y$ with a connection $\nabla$. Let $\eta\colon f^*\Omega_Y^1 \to \Omega_X^1$ be the canonical morphism. The map 
\[
f^*\nabla \colon f^*\sE = \sO_X \otimes_{f^{-1}\sO_Y} f^{-1}\sE \to \Omega_X^1 \otimes f^*\sE 
\]
defined by 
\[
(f^*\nabla) (h \otimes f^{-1}e) = h ((\eta \otimes \textup{id})(f^*(\nabla (e)))) + \dd h \otimes f^{-1}e 
\]
for any local section $e$ of $\sE$ over some open subset $U$ of $Y$ and any holomorphic function $h$ on some open subset $V$ of $X$ such that $f(V) \subseteq U$ is well-defined and induces a connection $f^*\nabla$ on $f^*\sE$. If $\nabla$ is flat, then $f^*\nabla$ is flat as well. 

\subsection{Projective connections} A central notion of this article is that of a flat projective connection in the sense of Beilinson and Kazhdan. 

Let $X$ be a complex manifold, and let $\sE$ be a vector bundle of rank $r$ on $X$. Let $\sD^1(\sE) \subset \sE\hspace{-0.07cm}\textit{nd}_{\,\mathbb{C}_X}(\sE)$ be the sheaf of differential operators $\Delta\colon \sE \to \sE$ of degree at most $1$ with scalar symbol $\sigma(\Delta)$. A connection $\nabla$ on $\sE$ can alternatively be defined as an $\sO_X$-linear map
\[
\nabla \colon T_X \longrightarrow  \sE\hspace{-0.07cm}\textit{nd}_{\,\mathbb{C}_X}(\sE)
\]
satisfying the Leibnitz rule 
\[
\nabla (v) (fe)=f\nabla (v) (e)+v(f) e
\] 
for any local sections $v$ of $T_X$, $f$ of $\sO_X$ and $e$ of $\sE$ over some open subset of $X$. In other words, a connection on $\sE$ is simply a splitting of the Atiyah exact sequence
\begin{center}
\begin{tikzcd}
0 \to \sE\hspace{-0.07cm}\textit{nd}_{\,\sO_X}(\sE) \ar[r] & \sD^1(\sE) \ar[r, "\sigma"'] & T_X \ar[r]\ar[l, bend right, "\nabla"'] & 0.
\end{tikzcd}
\end{center}
Moreover, the connection $\nabla$ is flat if and only if the map $T_X \to \sE\hspace{-0.07cm}\textit{nd}_{\,\mathbb{C}_X}(\sE)$ is a morphism of Lie algebras.

\medskip

A \textit{projective connection} $\square$ on $\sE$ is a splitting of the push-out 
\begin{center}
\begin{tikzcd}
0 \to \sE\hspace{-0.07cm}\textit{nd}_{\,\sO_X}(\sE)/\sO_X \textup{id}\ar[r] & \sD^1(\sE)/\sO_X \textup{id}\ar[r, "\overline{\sigma}"'] & T_X \ar[r]\ar[l, bend right, "\square"'] & 0
\end{tikzcd}
\end{center}
of the Atiyah exact sequence by $\sE\hspace{-0.07cm}\textit{nd}_{\,\sO_X}(\sE) \to \sE\hspace{-0.07cm}\textit{nd}_{\,\sO_X}(\sE)/\sO_X \textup{id}$. The connection is said to be \textit{flat} if the map $T_X \to \sE\hspace{-0.07cm}\textit{nd}_{\,\mathbb{C}}(\sE) / \sO_X \textup{id}$ is a morphism of Lie algebras. 

\medskip

A projective connection on $\sE$ is uniquely determined by the data of an open covering $(U_i)_{i\in I}$ of $X$ and connections $\square_i$ on $\sE|_{U_i}$ such that $\square_i=\square_j+\alpha_{ij}\otimes \textup{id}$ for some holomorphic $1$-form $\alpha_{ij}$ on $U_i\cap U_j$.

\medskip

Suppose that the connection $\square$ is flat. Then $\textup{K}(\square_i)=\beta_i \otimes \textup{id}$ for some holomorphic $2$-form $\beta_i$ on $U_i$, so that $\textup{tr}(\textup{K}(\square_i))=r \beta_i$. The holomorphic $2$-form $\textup{tr}(\textup{K}(\square_i))$ is the curvature of the connection on $\det \sE|_{U_i}$ induced by $\square_i$, and hence it is a closed $2$-form. Shrinking $U_i$ if necessary, we may therefore assume without loss of generality that $\textup{tr}(\textup{K}(\square_i))=\dd \alpha_i$ for some holomorphic $1$-form $\alpha_i$ on $U_i$.
Then 
\[
\textup{K}\left(\square_i-\frac{1}{r}\alpha_i\otimes \textup{id}\right)=\textup{K}(\square_i)-\frac{1}{r}\dd \alpha_i \otimes \textup{id}+\frac{1}{r^2}(\alpha_i\wedge \alpha_i)\otimes \textup{id}=0. 
\]
This shows that a flat projective connection on $\sE$ is uniquely determined by the data of flat connections $\square_i$ on $\sE|_{U_i}$ on an open covering $(U_i)_{i\in I}$ of $X$ such that $\square_i=\square_j+\alpha_{ij}\otimes \textup{id}$ for some closed holomorphic $1$-form $\alpha_{ij}$ on $U_i\cap U_j$. 

\medskip

We say that $\sE$ is \emph{projectively flat} if $\sE$ admits a flat projective connection.

\begin{fact}[{\cite[Corollary I.2.7]{kobayashi_diff_geom_vb}}]
The vector bundle $\sE$ is projectively flat if and only if the projective space bundle 
$\mathbb{P}(\sE):=\textup{Proj}_X(\textup{S}^\bullet\sE)$ comes from a representation $\pi_1(X) \to \textup{PGL}(r,\mathbb{C})$. 
\end{fact}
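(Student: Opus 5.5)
We show both implications by passing between projective flat connections and principal $\textup{PGL}(r,\mathbb{C})$-bundles with flat structure. By the discussion just before the statement, a flat projective connection on $\sE$ is the same as the data of an open covering $(U_i)_{i\in I}$ and flat connections $\square_i$ on $\sE|_{U_i}$ satisfying $\square_i=\square_j+\alpha_{ij}\otimes\textup{id}$, with $\alpha_{ij}$ closed holomorphic $1$-forms. We may take the $U_i$ simply connected, and also the pairwise intersections after refining to a good cover.

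\emph{Direct implication.} On each $U_i$ the flat connection $\square_i$ gives a frame of $\square_i$-flat sections. This yields a trivialization $\sE|_{U_i}\cong \sO_{U_i}^{\oplus r}$ whose transition matrices are constant up to scalar functions. Indeed, on $U_i\cap U_j$ write $\alpha_{ij}=\dd\lambda_{ij}$. Then $e^{\lambda_{ij}}$ times a $\square_i$-flat section is $\square_j$-flat, up to the sign convention. Hence the transition matrices satisfy $g_{ij}=e^{\lambda_{ij}}C_{ij}$ with $C_{ij}\in \textup{GL}(r,\mathbb{C})$ constant. Their images in $\textup{PGL}(r,\mathbb{C})$ are therefore locally constant and satisfy the cocycle condition. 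So $\mathbb{P}(\sE)$ has locally constant $\textup{PGL}$-valued transition functions, i.e. it comes from a $\textup{PGL}(r,\mathbb{C})$-local system. Such a local system is the same as a representation $\pi_1(X)\to \textup{PGL}(r,\mathbb{C})$, taken up to conjugation, and $\mathbb{P}(\sE)\cong (\widetilde X\times\mathbb{P}^{r-1})/\pi_1(X)$.

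\emph{Converse.} Suppose $\mathbb{P}(\sE)$ comes from a representation $\rho$. Then on a good cover we have trivializations of $\mathbb{P}(\sE)$ with constant transition maps $[C_{ij}]\in\textup{PGL}(r,\mathbb{C})$. Over each $U_i$ the isomorphism $\mathbb{P}(\sE|_{U_i})\cong U_i\times\mathbb{P}^{r-1}$ lifts to $\sE|_{U_i}\cong \sL_i\otimes\sO^{\oplus r}$ for some line bundle $\sL_i$, which is trivial on a contractible $U_i$. This follows from the standard fact that two vector bundles with isomorphic projectivizations differ by a line-bundle twist. So $\sE|_{U_i}\cong\sO_{U_i}^{\oplus r}$, and the transition matrices have the form $g_{ij}=h_{ij}C_{ij}$ with $h_{ij}\in\sO^*(U_i\cap U_j)$. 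Now let $\square_i$ be the trivial connection in this frame. Then $\square_i-\square_j=\dd\log h_{ij}\otimes\textup{id}$, up to sign. This is a closed holomorphic $1$-form times the identity, so the $\square_i$ define a flat projective connection.

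\emph{Main obstacle.} The main point to handle carefully is the converse step of lifting local trivializations of $\mathbb{P}(\sE)$ to trivializations of $\sE$ up to scalars. The projective bundle isomorphism must come from a linear isomorphism twisted by a line bundle: an automorphism of $\mathbb{P}^{r-1}$ is linear, and the relative $\sO(1)$ is determined up to a line bundle from the base. Alternatively, this is the standard cited result (Kobayashi, Corollary I.2.7), which may simply be invoked.
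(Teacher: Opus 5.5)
Your proof is correct and follows essentially the paper's route: the Fact itself is only cited from Kobayashi, but your two computations are exactly the proof of Lemma~\ref{lemma:proj_flat_frames}. These are flat frames whose transition matrices are $e^{-\lambda_{ij}}C_{ij}$ with $C_{ij}$ constant, and conversely trivial connections in frames related by $h_{ij}C_{ij}$, which differ by $\dd\log h_{ij}\otimes\textup{id}$; you supplement them with the standard dictionary between locally constant $\textup{PGL}(r,\mathbb{C})$-valued cocycles and representations of $\pi_1(X)$. One small correction in the converse: a holomorphic line bundle on a merely contractible open set need not be trivial (Steinness is also needed), but this is harmless, since you may shrink the $U_i$ to coordinate balls and local constancy of the projective transition maps survives refinement.
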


\begin{fact}[{\cite[Proposition II.3.1(b)]{kobayashi_diff_geom_vb}}]\label{fact:chern_classes}
If $\sE$ is projectively flat, then $c_i(\sE)=\frac{1}{r^i}\binom{r}{i}c_1(\sE)^i$ for any integer $i \ge 1$. 
\end{fact}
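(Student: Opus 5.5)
The plan is to prove the formula by Chern--Weil theory, building a global $C^\infty$ connection on $\sE$ whose curvature is a scalar $2$-form times the identity. Then both sides can be computed from that single form. As explained above, a flat projective connection is given by an open covering $(U_i)_{i\in I}$ of $X$ and holomorphic connections $\square_i$ on $\sE|_{U_i}$ with $\square_i=\square_j+\alpha_{ij}\otimes\textup{id}$ on $U_i\cap U_j$. Flatness of the projective connection gives $\textup{K}(\square_i)=\beta_i\otimes\textup{id}$ for holomorphic $2$-forms $\beta_i$. (We do not need the normalization making the $\square_i$ flat.)

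First, I choose a smooth partition of unity $(\rho_i)_{i\in I}$ subordinate to the covering and set $D:=\sum_i\rho_i\square_i$. This is a global $C^\infty$ connection on $\sE$, viewed as a $C^\infty$ complex vector bundle, since a convex combination of connections is again a connection. On $U_j$ we have
\[
D=\square_j+\gamma_j\otimes\textup{id},\qquad \gamma_j:=\sum_i\rho_i\alpha_{ij},
\]
where $\gamma_j$ is a smooth complex $1$-form on $U_j$.

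Second, I compute the curvature on $U_j$. Because $\gamma_j\otimes\textup{id}$ is scalar, it commutes with everything, and $\gamma_j\wedge\gamma_j=0$. Hence
\[
\textup{K}(D)|_{U_j}=\textup{K}(\square_j)+\dd\gamma_j\otimes\textup{id}=(\beta_j+\dd\gamma_j)\otimes\textup{id}.
\]
Since $\textup{K}(D)$ is a globally defined $\sE\hspace{-0.07cm}\textit{nd}(\sE)$-valued $2$-form, the local forms $\beta_j+\dd\gamma_j$ agree on overlaps. They therefore glue to a global smooth $2$-form $\Omega$ with $\textup{K}(D)=\Omega\otimes\textup{id}$.

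Third, I apply Chern--Weil. The total Chern form of $(\sE,D)$ is
\[
\det\Bigl(\textup{id}+\tfrac{\sqrt{-1}}{2\pi}\textup{K}(D)\Bigr)=\Bigl(1+\tfrac{\sqrt{-1}}{2\pi}\Omega\Bigr)^r .
\]
Here the even-degree forms commute, so the binomial expansion is legitimate. Writing $\theta:=\frac{\sqrt{-1}}{2\pi}\Omega$, which is a closed form since it represents Chern classes, we get $c_i(\sE)=\binom{r}{i}[\theta]^i$ for all $i$. In particular $c_1(\sE)=r[\theta]$. Substituting $[\theta]=\frac1r c_1(\sE)$ yields $c_i(\sE)=\frac{1}{r^i}\binom{r}{i}c_1(\sE)^i$.

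There is no real obstacle. The one point needing care is the gluing step: the curvature of the patched connection $D$ must remain scalar. This works precisely because the transition terms $\alpha_{ij}\otimes\textup{id}$ are scalar, so no commutator terms arise. If one prefers Chern classes in $H^i(X,\Omega^i_X)$ instead of de Rham cohomology, the same computation can be done with Atiyah classes. The Atiyah class of $\sE$ is represented by the scalar cocycle $(\alpha_{ij}\otimes\textup{id})$, and its powers give the same identity.
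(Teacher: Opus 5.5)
Your argument is correct and is essentially the proof behind the cited result. Kobayashi obtains Proposition II.3.1(b) by Chern--Weil theory for a connection whose curvature is a scalar $2$-form times $\textup{id}$, exactly as in your third step. Your partition-of-unity gluing of the $\square_i$ (each regarded as the smooth connection $\square_i+\bar\partial$, whose curvature is just $\textup{K}(\square_i)$) is a direct way to get such a connection from the paper's holomorphic definition, instead of passing through the flat $\textup{PGL}(r,\mathbb{C})$-bundle $\mathbb{P}(\sE)$ and Corollary I.2.7.
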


\begin{lemma}\label{lemma:proj_flat_frames}
The vector bundle $\sE$ is projectively flat if and only if there exists a covering $(U_i)_{i \in I}$ of $X$ by analytically open sets as well as frames $(e_1^i,\ldots,e_r^i)$ of $\sE$ over $U_i$ satisfying the following condition. There exist locally constant matrices $M_{ij} \in \textup{GL}(r,\sO_X(U_i))$ as well as nowhere vanishing holomorphic functions $f_{ij}$ on $U_i\cap U_j$ such that 
\[
\textbf{e}_i= f_{ij} M_{ij} \cdot \textbf{e}_j
\]
on $U_i \cap U_j$, where 
\[
\textbf{e}_i = 
\begin{pmatrix}
e_1^i \\
\vdots \\
e_r^i
\end{pmatrix}
.
\]
\end{lemma}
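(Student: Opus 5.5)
We prove both implications using the local description of flat projective connections given just before the statement: a family of flat connections $\square_i$ on $\sE|_{U_i}$ such that $\square_i=\square_j+\alpha_{ij}\otimes\textup{id}$ with $\alpha_{ij}$ closed holomorphic $1$-forms on $U_i\cap U_j$.

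\emph{Projectively flat $\Rightarrow$ frames.} Start from such data $(U_i,\square_i,\alpha_{ij})$. Refine the covering so that each $U_i$ is simply connected and each $U_i\cap U_j$ is simply connected (for instance, geodesically convex balls for some Riemannian metric, or a good cover). Since $\square_i$ is flat on the simply connected set $U_i$, there is a frame $\textbf{e}_i$ of $\sE|_{U_i}$ consisting of $\square_i$-flat sections. On $U_i\cap U_j$ the form $\alpha_{ij}$ is closed, so $\alpha_{ij}=\dd g_{ij}$ for a holomorphic function $g_{ij}$. Then $\square_j(e^{g_{ij}}s)=e^{g_{ij}}\big(\square_j s+\alpha_{ij}\otimes s\big)=e^{g_{ij}}\square_i s$. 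Thus $s\mapsto e^{g_{ij}}s$ carries $\square_i$-flat sections to $\square_j$-flat sections. (The sign convention only changes $g_{ij}$ to $-g_{ij}$.) It follows that $e^{g_{ij}}\textbf{e}_i$ is a $\square_j$-flat frame. Two $\square_j$-flat frames on the connected set $U_i\cap U_j$ differ by a constant matrix, so $e^{g_{ij}}\textbf{e}_i=M_{ij}\cdot\textbf{e}_j$ with $M_{ij}$ constant. Setting $f_{ij}=e^{-g_{ij}}$ gives $\textbf{e}_i=f_{ij}M_{ij}\cdot\textbf{e}_j$, as required. If $U_i\cap U_j$ is disconnected, we instead work componentwise, and $M_{ij}$ is then only locally constant; this is exactly what the statement allows.

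\emph{Frames $\Rightarrow$ projectively flat.} Given such frames, let $\square_i$ be the connection on $\sE|_{U_i}$ for which the $e^i_k$ are flat sections. It is flat, since its connection matrix in this frame is $0$. On $U_i\cap U_j$ we have $e^i_k=f_{ij}\sum_l (M_{ij})_{kl}e^j_l$, and $\dd M_{ij}=0$. Hence
\[
\square_j e^i_k=\dd f_{ij}\otimes \sum_l (M_{ij})_{kl}e^j_l=\frac{\dd f_{ij}}{f_{ij}}\otimes e^i_k.
\]
Comparing with $\square_i e^i_k=0$ gives $\square_j=\square_i+\dd\log f_{ij}\otimes\textup{id}$ on $\sE|_{U_i\cap U_j}$, because both sides are connections agreeing on a frame. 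The forms $\alpha_{ij}=-\dd\log f_{ij}$ are closed holomorphic $1$-forms. By the characterization above, the $\square_i$ therefore glue to a flat projective connection on $\sE$.

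The only delicate point is choosing a covering fine enough that the flat frames exist and the closed forms $\alpha_{ij}$ admit global primitives on the overlaps. A good cover of the manifold $X$ takes care of both.
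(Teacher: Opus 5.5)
Your proof is correct and follows the paper's route: local flat lifts $\square_i$ with flat frames, holomorphic primitives $g_{ij}$ of the closed forms $\alpha_{ij}$ on small overlaps giving $\textbf{e}_i=e^{-g_{ij}}M_{ij}\cdot\textbf{e}_j$, and conversely the connections annihilating the given frames, which differ by $\dd\log f_{ij}\otimes\textup{id}$. The differences are only cosmetic: you compare two $\square_j$-flat frames directly instead of differentiating the transition matrix $N_{ij}$, and your formula with $\dd f_{ij}/f_{ij}$ is the correct form of the paper's ``$\square_i=\square_j-\dd f_{ij}\otimes\textup{id}$''.
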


\begin{proof}
Suppose first that $\sE$ is projectively flat, and let $\square$ be a projective connection on $\sE$.
Let $(U_i)_{i \in I}$ be a covering of $X$ by analytically open sets, and let $\square_i$ be a flat connection on $\sE|_{U_i}$ lifting $\square|_{U_i}$. Shrinking the $U_i$, if necessary, we may assume that there exist frames
$(e_1^i,\ldots,e_r^i)$ of $\sE$ over $U_i$ such that $\square_i(\textbf{e}_i)=0$, where 
\[
\textbf{e}_i = 
\begin{pmatrix}
e_1^i \\
\vdots \\
e_r^i
\end{pmatrix}
.
\]
Let $\alpha_{ij}$ be closed holomorphic $1$-forms on $U_i\cap U_j$ such that $\square_i=\square_j+\alpha_{ij}\otimes \textup{id}$. Shrinking the $U_i$ further, we may assume that $\alpha_{ij}=\dd g_{ij}$ for some holomorphic function $g_{ij}$ on $U_i\cap U_j$. Let $N_{ij}\in \textup{GL}(r,\sO_X(U_i))$ be the transition matrix, so that $\textbf{e}_i=N_{ij} \cdot \textbf{e}_j$. We have
\begin{multline*}
0 = \square_i (\textbf{e}_i) = \square_i (N_{ij} \cdot \textbf{e}_j) = N_{ij} \cdot \square_i(\textbf{e}_j) + \dd N_{ij} \otimes \textbf{e}_j \\ = \alpha_{ij} \otimes \textbf{e}_i + \dd N_{ij}\cdot N_{ij}^{-1} \otimes \textbf{e}_i = \dd g_{ij} \otimes \textbf{e}_i + \dd N_{ij}\cdot N_{ij}^{-1} \otimes \textbf{e}_i.
\end{multline*}
This easily implies that $N_{ij}=\exp(-g_{ij}) M_{ij}$ for some locally constant matrix $M_{ij} \in \textup{GL}(r,\sO_X(U_i))$, so that $\textbf{e}_i= f_{ij} M_{ij} \cdot \textbf{e}_j$, where $f_{ij}=\exp(-g_{ij})$. 

Conversely, let $(U_i)_{i \in I}$ be a covering of $X$ by analytically open sets and let $(e_1^i,\ldots,e_r^i)$ be frames of $\sE$ over $U_i$ as in the statement of the lemma. Let $\square_i$ be the flat connection on $\sE|_{U_i}$ such that $\square_i (\textbf{e}_i)=0$. Then we have $\square_i=\square_j-\dd f_{ij}\otimes \textup{id}$ on $U_i \cap U_j$, and hence the $\square_i$ define a projective connection $\square$ on $\sE$.
\end{proof}

Let $f \colon X \to Y$ be a holomorphic map between complex manifolds, and let $\sE$ be a vector bundle on $Y$ with a projective connection $\square$ given by an open covering $(U_i)_{i\in I}$ of $X$ and connections $\square_i$ on $\sE|_{U_i}$ as above. The connections
$f|_{f^{-1}(U_i)}^*\square_i$ on $f^*\sE|_{f^{-1}(U_i)}$ then define a projective connection $f^*\square$ on $f^*\sE$. If $\square$ is flat, then so is $f^*\square$.

\medskip

Any flat vector bundle is projectively flat with vanishing Chern classes, so a natural question is the following.

\begin{question}
When is it true that a projectively flat vector bundle $\sE$ with $c_1(\sE)=0$ on a compact complex manifold is flat?
\end{question}

Numerically projectively flat vector bundles with vanishing first Chern class on complex projective manifolds are automatically numerically flat and hence flat. (We refer to \cite{druel_IMJ} and the references therein for theses notions.) 

The following lemma provides a partial result regarding the question posed above.

\begin{lemma}\label{lemma:projectively_flat_versus_flat}
Let $X$ be a complex projective manifold, and let $\sE$ be a projectively flat vector bundle on $X$ with $c_1(\sE)=0$. Then there exists a generically finite surjective morphism $f \colon Y \to X$ from a complex projective manifold $Y$ such that $f^*\sE$ is flat.  
\end{lemma}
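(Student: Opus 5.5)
The idea is to pass to a cover on which $\det\sE$ acquires an $r$-th root, and then twist $\sE$ by the inverse of that root so that the projective connection lifts to a genuine flat connection.

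\textbf{Step 1: extracting an $r$-th root of the determinant.} Let $r$ be the rank of $\sE$ and set $\sL:=\det\sE$. Since $c_1(\sL)=c_1(\sE)=0$, the class of $\sL$ lies in $\textup{Pic}^0(X)$ modulo torsion. I would first replace $X$ by a cover on which $\sL$ becomes an $r$-th power, and there are two ways to do this.
\begin{enumerate}
\item Since $\textup{Pic}^0(X)$ is divisible, write $\sL\cong \sM_0^{\otimes r}\otimes \sT$ with $\sT$ torsion, $\sT^{\otimes m}\cong\sO_X$. Apply the Kummer cover associated with $\sT$ (a finite étale cover $f\colon Y\to X$ along which $\sT$ becomes trivial), or more crudely use the cyclic cover construction to obtain an $r$-th root of $f^*\sT$.
\item Alternatively, take a general section-type construction: the standard Bloch--Gieseker/Kawamata cyclic covering gives a finite flat cover $f\colon Y\to X$ from a smooth projective $Y$ with $f^*\sL\cong\sM^{\otimes r}$. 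Here one writes $\sL$ as a difference of very ample bundles and takes $r$-th roots of general sections. This is the reason the statement allows generically finite $f$.
\end{enumerate}
In either case we end up with a smooth projective $Y$ and a line bundle $\sM$ on $Y$ with $\sM^{\otimes r}\cong f^*\det\sE$. Moreover $c_1(\sM)=0$ in $H^2(Y,\mathbb{Q})$, and in fact $c_1(\sM)=0$ in $H^2(Y,\mathbb{C})$ as well, since $r\,c_1(\sM)=0$.

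\textbf{Step 2: normalizing the determinant.} Set $\sE':=f^*\sE\otimes\sM^{-1}$. This bundle is projectively flat, because $f^*$ preserves projective flatness and twisting by a line bundle does not change $\mathbb{P}(\sE)$. Its determinant is now trivial: $\det\sE'\cong\sO_Y$. On the other hand, $\sM$ has $c_1(\sM)=0$ on the compact Kähler manifold $Y$, so $\sM\in\textup{Pic}^0(Y)$ admits a flat unitary connection. Therefore it suffices to show that $\sE'$ is flat, since then $f^*\sE\cong \sE'\otimes\sM$ is flat.

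\textbf{Step 3: flatness of $\sE'$ (the key step).} Take the local flat connections $\square_i$ on $\sE'|_{U_i}$ from the flat projective connection. Because $\square_i=\square_j+\alpha_{ij}\otimes\textup{id}$ with $\alpha_{ij}$ closed, the induced flat connections $\det\square_i$ on $\det\sE'\cong\sO_Y$ differ by $r\alpha_{ij}$. Each $\det\square_i$ has the form $\dd+\beta_i$ with $\beta_i$ a closed holomorphic $1$-form on $U_i$, and $\beta_i-\beta_j=r\alpha_{ij}$. Consider the modified connections $\square_i':=\square_i-\frac1r\beta_i\otimes\textup{id}$.
\begin{enumerate}
\item Each $\square_i'$ is flat, since $\beta_i$ is closed and $\beta_i\wedge\beta_i=0$.
\item On overlaps, $\square_i'-\square_j'=\alpha_{ij}-\frac1r(\beta_i-\beta_j)=0$.
\end{enumerate}
So the $\square_i'$ glue to a global flat connection on $\sE'$. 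I expect the subtle point of the whole argument to be Step 1, producing the $r$-th root on a smooth projective cover. Step 3 then works with no cohomological obstruction at all, precisely because the determinant is globally trivialized.
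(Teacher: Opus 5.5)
Your proof is correct, but it takes a different route from the paper.

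\textbf{The paper's route.} It works with the monodromy representation $\rho\colon \pi_1(X)\to \textup{PGL}(r,\mathbb{C})=\textup{PSL}(r,\mathbb{C})$. A lemma of Langer and Simpson supplies a generically finite $f\colon Y\to X$ after which $\rho\circ f_*$ lifts to $\textup{SL}(r,\mathbb{C})$. The resulting flat bundle $\sV$ satisfies $\mathbb{P}(\sV)\cong\mathbb{P}(f^*\sE)$, so $f^*\sE\otimes\sL$ is flat for some line bundle $\sL$. Finally, $c_1(\sL)=0$ forces $\sL$ to be flat.

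\textbf{Your route.} You avoid monodromy representations altogether. You normalize the determinant and then do a direct \v{C}ech computation: the local flat lifts $\square_i$ are corrected by the scalar closed forms $\frac{1}{r}\beta_i$ so that they glue. This is elementary, needs no external lifting result, and with option (1) of Step 1 it even gives a finite \'etale $f$.

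\textbf{Step 1 is unnecessary.} You expected Step 1 to be the subtle point, but it can be dropped entirely. On the compact K\"ahler manifold $X$, the line bundle $\det\sE$ already carries a flat holomorphic connection $\nabla^{\det}$, since $c_1(\det\sE)=0$; this is the same fact the paper uses for $\sL$. Run your Step 3 with $\beta_i:=\det\square_i-\nabla^{\det}$. This form is closed because both connections are flat, and it still satisfies $\beta_i-\beta_j=r\alpha_{ij}$. The connections $\square_i-\frac{1}{r}\beta_i\otimes\textup{id}$ then glue into a flat connection on $\sE$ itself. So the lemma holds with $f=\textup{id}$.

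By contrast, the paper passes through the stronger intermediate statement of an $\textup{SL}$-lift of the monodromy. That is why its statement only asserts a generically finite cover.
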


\begin{proof}Let $r$ be the rank of $\sE$, and let 
\[
\rho\colon \pi_1(X) \to \textup{PGL}(r,\mathbb{C})=\textup{PSL}(r,\mathbb{C})
\]
be a representation that defines the projectively flat structure on $\sE$. By \cite[Lemma 2.6]{langer_simpson}, there exists a generically finite surjective morphism $f \colon Y \to X$ from a complex projective manifold $Y$ such that the composition
\[
\pi_1(Y) \to \pi_1(X) \to \textup{PSL}(r,\mathbb{C})
\]
lifts to a representation
\[
\pi_1(Y) \to \textup{SL}(r,\mathbb{C}).
\]
As a consequence, there exists a line bundle $\sL$ on $Y$ such that $(f^*\sE)\otimes\sL$ is a flat vector bundle. Now observe that $c_1(\sL)=0$. This is because Chern classes of flat vector bundles vanish by Fact \ref{fact:connection_vanishing_chern_classes} and $c_1(\sE)=0$ by assumption. This implies that $\sL$ is flat, finishing the proof of the lemma.
\end{proof}

We will need the following auxiliary statement.

\begin{lemma}\label{lemma:first_chern_class_direct_summand}
Let $X$ be a complex projective manifold, and let $\sE$ be a projectively flat vector bundle on $X$ with $c_1(\sE)=0$. Let $\sA$ be a direct summand of $\sE$. Then $c_1(\sA)=0$.  
\end{lemma}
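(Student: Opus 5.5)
The plan is to reduce to the flat case using Lemma \ref{lemma:projectively_flat_versus_flat}, and then use that direct summands of flat bundles on projective manifolds are numerically flat.

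\emph{Step 1: pass to a cover where $\sE$ is flat.} By Lemma \ref{lemma:projectively_flat_versus_flat}, there is a generically finite surjective morphism $f\colon Y\to X$ from a complex projective manifold $Y$ such that $f^*\sE$ is flat. The bundle $f^*\sA$ is a direct summand of $f^*\sE$. It suffices to show $c_1(f^*\sA)=0$. Indeed, $f^*$ is injective on rational cohomology, since $f_*f^*=\deg(f)\cdot\textup{id}$ by the projection formula for generically finite surjective maps. In particular $f^*c_1(\sA)=0$ forces $c_1(\sA)=0$ in $H^2(X,\mathbb{Q})$, hence in $H^1(X,\Omega^1_X)$.

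\emph{Step 2: flat bundles on projective manifolds are numerically flat.} Replace $X$ by $Y$ and assume $\sE$ is flat with $\sE=\sA\oplus\sB$. Fix an ample class $H$. A flat bundle on a compact Kähler manifold is semistable with respect to any polarization, with vanishing Chern classes; this is \emph{Simpson's correspondence}, where flat bundles are exactly the extensions of stable flat, i.e. numerically flat, bundles. More concretely, flat bundles on projective manifolds are numerically flat, so $\sE$ and $\sE^\vee$ are nef.

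\emph{Step 3: conclude.} A quotient of a nef bundle is nef. Hence $\sA$, a quotient of $\sE$, and $\sA^\vee$, a quotient of $\sE^\vee$, are both nef. Thus $\det\sA$ and $\det\sA^\vee$ are nef line bundles, so $\det\sA$ is numerically trivial. It follows that $c_1(\sA)=0$ in $H^2(X,\mathbb{R})$, hence in $H^1(X,\Omega^1_X)$ by Hodge theory.

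\emph{Alternative for Step 2 and main obstacle.} The main obstacle is justifying Step 2, since the paper only refers to \cite{druel_IMJ} for these notions. If it is not directly available, I would argue by semistability instead. A flat bundle admits a Hermitian–Yang–Mills-type structure only when the representation is unitary, so in general one uses Simpson's result that flat bundles are $H$-semistable with $c_1\cdot H^{n-1}=0$. This gives $c_1(\sA)\cdot H^{n-1}\le 0$ and $c_1(\sB)\cdot H^{n-1}\le 0$, with sum zero, so both slopes vanish for every ample $H$. Varying $H$ over the ample cone, which spans $H^{1,1}\cap H^2(X,\mathbb{R})$ on $N^1$, I would then deduce that $c_1(\sA)$ is numerically trivial, and hence zero in $H^1(X,\Omega^1_X)$. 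This last deduction uses the Hodge index theorem to pass from numerical triviality against all $H^{n-1}$ to vanishing of the $(1,1)$-class.
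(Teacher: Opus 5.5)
\textbf{There is a genuine gap in Step~2.} Your Step~1 is fine and matches the paper, which also pulls back along the generically finite cover of Lemma~\ref{lemma:projectively_flat_versus_flat} and descends with the projection formula.

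Step~2, and its ``alternative'' version, asserts something false. A vector bundle with a flat holomorphic connection on a projective manifold need not be nef, nor even semistable. Here is a counterexample. On a curve $C$ of genus $g\ge 2$, take the bundle attached to the uniformizing representation $\pi_1(C)\to\textup{SL}(2,\mathbb{R})$. It is the non-split extension $0\to K_C^{1/2}\to\mathscr{V}\to K_C^{-1/2}\to 0$. This bundle is flat, but it has the quotient $K_C^{-1/2}$ of degree $1-g<0$, and $K_C^{1/2}$ is a destabilizing subbundle.

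The source of the error is a misreading of Simpson's correspondence. It matches flat bundles with semistable \emph{Higgs} bundles, not with semistable holomorphic bundles. The successive extensions of unitary flat bundles are exactly the numerically flat bundles, which form a strictly smaller class than the flat ones. So you have neither nefness of $\sA$ and $\sA^\vee$ nor the inequality $c_1(\sA)\cdot H^{n-1}\le 0$, and Step~3 has nothing to rest on.

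\textbf{The fix.} The input you need is weaker than semistability: a direct summand of a bundle with a holomorphic connection also carries one. Write $f^*\sE=f^*\sA\oplus\mathscr{B}$ with inclusion $\iota$ and projection $\textup{pr}$, and let $\nabla$ be a flat connection on $f^*\sE$. Then $(\textup{id}\otimes\textup{pr})\circ\nabla\circ\iota$ is a holomorphic connection on $f^*\sA$; the Leibniz rule survives because $\textup{pr}\circ\iota=\textup{id}$. Hence the Atiyah class of $f^*\sA$ vanishes, so $c_1(f^*\sA)=0$ in $H^1(Y,\Omega^1_Y)$, and therefore in $H^2(Y,\mathbb{C})$ by Hodge theory. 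Your Step~1 then gives $c_1(\sA)=0$.

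The paper reaches the same conclusion differently. It restricts to curves $C\to Y$ and invokes Weil's theorem that direct summands of flat bundles on curves have degree $0$; the proof of that theorem is essentially this observation. It then concludes via the projection formula.
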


\begin{proof}
By Lemma \ref{lemma:projectively_flat_versus_flat}, there exists a generically finite surjective morphism $f \colon Y \to X$ from a complex projective manifold $Y$ such that $f^*\sE$ is flat. Let now $C \to Y$ be a smooth projective curve. A theorem of Weil (\cite[Theorem 10]{atiyah57}) asserts that a vector bundle on a smooth projective curve is flat is and only if any direct summand has degree $0$. This immediately implies that $\deg(\sA|_C)=0$. The conclusion now follows from the projection formula.   
\end{proof}

\begin{question}\label{question:direct_summand_1}
When is it true that a direct summand of a flat vector bundle on a complex manifold is flat?
\end{question}

A theorem of Weil states that the answer to Question \ref{question:direct_summand_1} is yes if $\dim X=1$ (see \cite[Theorem 10]{atiyah57}).

\subsection{Ehresmann connections} 
Let $f \colon X \to Y$ be a surjective submersion between complex manifolds. An \emph{Ehresmann connection} on $f$ is a splitting of the exact sequence
\[
0 \to T_{X/Y}\to T_X \to f^*T_Y \to 0.
\]
The connection is said to be \emph{flat} if the corresponding subbundle $\sD \subseteq T_X$ is involutive.

\medskip

Suppose that the connection is flat, and let $\cF$ be the induced (regular) foliation on $X$. If $f$ is proper, then such a connection has the path lifting property, \emph{i.e.} given any point $y \in Y$, any path $\gamma\colon [0,1] \to Y$ starting at $y(0) = y$, and any point $x$ lying in the fibre $F=f^{-1}(y) \subseteq X$, there exists a unique path $\tilde{\gamma} \colon [0,1] \to X$ that is contained in a leaf of $\cF$ and starts at the point $\tilde{\gamma}(0)=x$. The monodromy representation at $y$ is the homomorphism 
\[
\pi_1(Y,y) \to \textup{Aut}(F)
\] 
defined by declaring that a homotopy class $[\gamma] \in \pi_1(Y,y)$ acts on $F$ by sending $x \in F$ to $\widetilde{\gamma^{-1}}(1)$ where $\widetilde{\gamma^{-1}}$ is the path lifting $\gamma^{-1}$ with initial condition $\widetilde{\gamma^{-1}}(0)=x$. Lifting arbitrary paths in Y based at y then gives a canonical isomorphism
\[
(\wt{Y} \times F)/\pi_1(Y,y) \cong X,
\]
where $\wt{Y}$ denotes the universal cover of $Y$ based at $y$ and $\pi_1(Y,y)$ acts diagonally on the product.

\section{Vector bundles on foliated manifolds}\label{section:connections_foliated_bundles}

\subsection{Partial connections}\label{paragraph:partial_connections}
Let $\cF$ be a regular foliation on a complex manifold $X$, and let $\dd_\cF \colon \Omega^{\bullet}_\cF \to \Omega^{\bullet + 1}_\cF$ be the exterior derivative along the leaves. Let $\sE$ be a vector bundle. A \textit{$\cF$-connection} (or a partial connection) $\nabla$ on $\sE$ is a morphism of abelian sheaves
\[
\nabla\colon \sE \to \Omega_\cF^1\otimes\sE
\] 
satisfying 
\[
\nabla(fe)=f\nabla(e)+\dd_\cF f \otimes e
\] 
for any local sections $f$ of $\sO_X$ and $e$ of $\sE$ over some open subset of $X$. The \emph{curvature} of $\nabla$ is the $\sO_X$-linear map
\[
\textup{K}(\nabla):=\nabla^1\circ\nabla\colon \sE \to \Omega_\cF^2\otimes \sE
\]
of $\sO_X$-modules, where $\nabla^i$ denotes the extension of $\nabla$ to 
a morphism of abelian sheaves $\Omega_\cF^i \otimes \sE \to \Omega_\cF^{i+1} \otimes \sE$ given by 
\[
\nabla^i(\alpha\otimes e)= \dd_\cF \alpha \otimes e + (-1)^i \alpha \wedge \nabla(e)
\]
for any local sections $\alpha$ of $\Omega_X^i$ and $e$ of $\sE$ over some open subset of $X$. 
The connection is said to be \textit{flat} if $\textup{K}(\nabla)=0$.

A connection $\nabla$ on $\sE$ can alternatively be defined as an $\sO_X$-linear map
\[
\nabla \colon T_\cF \longrightarrow  \sE\hspace{-0.07cm}\textit{nd}_{\,\mathbb{C}_X}(\sE)
\]
satisfying the Leibnitz rule 
\[
\nabla (v) (fe)=f\nabla (v) (e)+v(f) e
\] 
for any local sections $v$ of $T_\cF$, $f$ of $\sO_X$ and $e$ of $\sE$ over some open subset of $X$. 
The partial connection is flat if and only if the map $T_\cF \to \sE\hspace{-0.07cm}\textit{nd}_{\,\mathbb{C}_X}(\sE)$ is a morphism of Lie algebras. 

\medskip

Given partial connections induce new partial connections on associated vector bundles (see \ref{paragraph:connections}).

\medskip

Let $f \colon (X,\cF) \to (Y,\cG)$ be a \emph{morphism of foliated manifolds}, \textit{i.e.} the composition 
\[
f^*\Omega_Y^1 \longrightarrow \Omega_X^1 \longrightarrow \Omega_\cF^1
\] 
factors as 
\[
f^*\Omega_Y^1 \longrightarrow f^*\Omega_\cG^1 \overset{\eta}{\longrightarrow} \Omega_\cF^1,
\] 
and let $\sE$ be a vector bundle on $Y$ with a $\cG$-connection $\nabla$. The map 
\[
f^*\nabla \colon f^*\sE = \sO_X \otimes_{f^{-1}\sO_Y} f^{-1}\sE \to \Omega_\cF^1 \otimes f^*\sE 
\]
defined by 
\[
(f^*\nabla) (h \otimes f^{-1}e) = h ((\eta \otimes \textup{id})(f^*(\nabla (e)))) + \dd_\cF h \otimes f^{-1}e 
\]
for any local section $e$ of $\sE$ over some open subset $U$ of $Y$ and any holomorphic function $h$ on some open subset $V$ of $X$ such that $f(V) \subseteq U$ is well-defined and induces a $\cF$-connection $f^*\nabla$ on $f^*\sE$. If $\nabla$ is flat, then $f^*\nabla$ is flat as well. 

\begin{exmp}\label{exmp:canonical_connection_pull_back_foliation}
Let $f \colon X \to Y$ be a submersion of complex manifolds, and let $\cF$ be the regular foliation on $X$ induced by $f$. If $\sE$ is a vector bundle on $Y$, then $f^*\sE$ admits a natural flat $\cF$-connection. 
\end{exmp}

A vector bundle $\sE$ with a flat partial connection $\nabla$ is called a \emph{foliated vector bundle}. A \emph{morphism} between two foliated vector bundles $(\sE_1,\nabla_1)$ and $(\sE_2,\nabla_2)$ is an $\sO_X$-linear map $\phi \colon \sE_1 \to \sE_2$ such that
\[
\phi(\nabla_1(v)(e))=\nabla_2(v)(\phi(e))
\]
for any local sections $v$ and $e$ of $T_\cF$ and $\sE$ over some open subset of $X$.

\begin{rem}\label{remark:Ehresmann_versus_Koszul}
Let $(\sE,\nabla)$ be a foliated vector bundle on $X$, and let $E:=\mathbb{V}(\sE^\vee)$ with projection $\pi\colon E \to X$. Let $v$ and $\ell$ be any local sections of $T_\cF$ and $\sE^\vee$ over some open subset $U$ of $X$. The map sending $\ell$ to $v_E(\ell):=\nabla^\vee(v)(\ell)$ extends to a derivation
\[
v_E \colon \textup{S}^\bullet H^0(U,\sE^\vee|_U) \to \textup{S}^\bullet H^0(U,\sE^\vee|_U)
\]
by the Leibnitz rule, and gives a splitting of the exact sequence
\[
0 \to T_{E/X} \to T_E \to \pi^*T_X \to 0
\]
over $\pi^*T_\cF$. Let $\sD \subseteq T_X$ be the corresponding distribution. By construction, $\sD$ is $\textup{GL}(E/X)$-invariant, and involutive. The induced foliation $\cH$ on $X$ then projects onto $\cF$. In particular, $E$ is a foliated bundle in the sense of Molino.

Conversely, if $\cH$ a $\textup{GL}(E/X)$-invariant foliation on $E$ which projects on $\cF$, then 
$\cH$ comes from a flat partial connection on $\sE$.
\end{rem}

\begin{rem}
We refer to \cite{moerdijk_mrcun_foliations} for a standard reference concerning Lie groupoids.
Recall that the monodromy groupoid $\textup{G}=(G,X,s,t,c)$ of $\cF$ is the holomorphic Lie groupoid over $X$ for which the morphisms between two points in the same leaf $L$ are the homotopy classes of paths in $L$. If the points are not in the same leaf, then there is no morphism between them. The source and target maps pick out the endpoints of paths. The usual composition of paths then makes $\textup{G}$ into a Lie groupoid over $X$. 

A $\textup{G}$-equivariant vector bundle is a pair $(\sE,\psi)$, where $\sE$ a vector bundle on $X$, and $\psi\in \textup{Hom}_{\,\sO_G}(s^*\sE,t^*\sE)$) is an isomorphism satisfying the cocycle condition
\[
p_1^*\psi \circ p_2^* \psi = c^* \psi
\]
for $p_1,p_2$ the natural projections $G\times_{s,X,t} G \to G$.
A morphism between two $\textup{G}$-equivariant vector bundles $(\sE_1,\psi_1)$ and $(\sE_2,\psi_2)$ is an $\sO_X$-linear map $\phi \colon \sE_1 \to \sE_2$ such that 
\[
t^*\phi \circ \psi_1 = \psi_2 \circ s^*\phi\colon s^*\sE_1 \to t^*\sE_2.
\]

Let $(\sE,\nabla)$ be a foliated vector bundle on $X$, and let $L$ be a leaf of $\cF$. Then $\nabla|_L$ is a flat connection on $\sE|_L$. The parallel transport isomorphisms with respect to the flat connections $\nabla|_L$ give an equivalence between the category of foliated vector bundles and the category of $\textup{G}$-equivariant vector bundles. 
\end{rem}

\subsection{Bott partial connection}\label{exmp:bott_connection} 
Let $\cF$ be a regular foliation on a complex manifold $X$. Let $\sN$ be the normal bundle of $\cF$, and let $\pi\colon T_X\to \sN$ be the quotient map. For sections $e$ of $\sN$, $t$ of $T_X$, and $v$ of $\cF$ over some open subset of $X$ with $e=\pi(t)$, set 
\[
\nablaBott(v)(e)=\pi ([v,t]). 
\]
This expression is well defined, $\sO_X$-linear in $v$, and satisfies the Leibnitz rule 
\[\nablaBott(v)(fe)=f\nablaBott(v)(e)+v(f)e.
\]
The partial connection $\nablaBott$ is flat. The induced partial connection on $\sN^\vee$ is given by the formula
\[
\nablaBott^\vee(\alpha)=\cL_v \alpha =\iota_v \dd \alpha,
\]
for any local sections $\alpha$ of $\sN^\vee \subseteq \Omega_X^1$ and $v$ of $T_\cF$ over some open subset of $X$.
We will refer to $\nablaBott$ as the \textit{Bott partial connection} on $\sN$.

\medskip

The following lemma can be easily verified.

\begin{lemma}\label{lemma:functoriality_Bott_connection}
Let $f \colon (X,\cF) \to (Y,\cG)$ be a morphism of foliated manifolds. Then the natural map $\sN_\cF \to f^*\sN_\cG$ induces a morphism 
\[
(\sN_\cF,\nablaBott^\cF) \to (f^*\sN_\cG,f^*\nablaBott^\cG)
\]
of foliated vector bundles.
\end{lemma}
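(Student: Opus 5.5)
We need to prove that the natural map $\phi\colon \sN_\cF \to f^*\sN_\cG$ intertwines $\nablaBott^\cF$ with $f^*\nablaBott^\cG$. The plan is to check this locally, in the dual formulation: it suffices to show that the dual map $\phi^\vee\colon f^*\sN_\cG^\vee \to \sN_\cF^\vee$ is compatible with the dual partial connections. Here $\phi^\vee$ is induced by pulling back $1$-forms, $\beta \mapsto f^*\beta$. The hypothesis that $f$ is a morphism of foliated manifolds means $f^*\sN_\cG^\vee \to \Omega^1_X$ lands in $\sN_\cF^\vee$, so this map is well defined.

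We use the formula $\nablaBott^\vee(v)(\alpha)=\iota_v\dd\alpha$ restricted to $T_\cF$, for $\alpha$ a section of $\sN^\vee\subseteq\Omega^1$. Both sides satisfy the Leibniz rule in the section variable. The sheaf $f^*\sN_\cG^\vee$ is generated over $\sO_X$ by the sections $f^{-1}\beta$ with $\beta$ a local section of $\sN_\cG^\vee$. It therefore suffices to verify, for $v$ a local section of $T_\cF$ and $\beta$ a local section of $\sN_\cG^\vee$, that
\[
\iota_v \dd (f^*\beta) \;=\; \phi^\vee\big((f^*\nablaBott^{\cG,\vee})(v)(f^{-1}\beta)\big)
\]
as sections of $\sN_\cF^\vee$.

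The left side equals $\iota_v f^*(\dd\beta)$. For the right side, recall that $f^*\nablaBott^{\cG,\vee}(f^{-1}\beta)$ is obtained as follows: take $\nablaBott^{\cG,\vee}\beta\in\Omega^1_\cG\otimes\sN_\cG^\vee$, pull it back, map via $\eta\colon f^*\Omega^1_\cG\to\Omega^1_\cF$, and contract with $v$. Pointwise, $\eta$ is dual to $\dd f\colon T_\cF\to f^*T_\cG$. The contraction therefore amounts to evaluating $\nablaBott^{\cG,\vee}$ on $\dd f(v)$, read as a section of $f^*T_\cG$. Write $\dd f(v)=\sum g_k\, f^{-1}w_k$ locally, with $w_k$ a local frame of $T_\cG$ near $f(x)$ and $g_k\in\sO_X$. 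Then the right side becomes $\sum_k g_k\, f^*(\iota_{w_k}\dd\beta)$.

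So the identity reduces to the pointwise equality
\[
\iota_v f^*(\dd\beta)=\sum_k g_k f^*(\iota_{w_k}\dd\beta),
\]
modulo $(T_\cF)^\perp$; in fact it holds as an identity of $1$-forms. To see this, evaluate both sides on a tangent vector $u$ at $x$. The left side gives $\dd\beta(\dd f(v),\dd f(u))$ and the right side gives $\sum_k g_k\,\dd\beta(w_k,\dd f(u))$, which agree by linearity of $\dd\beta$ in its first slot.

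The main (minor) obstacle is bookkeeping. One must check that $\iota_w\dd\beta$ for $w\in T_\cG$ already lies in $\sN_\cG^\vee$: this follows from integrability, which forces $\dd\beta$ to vanish on $T_\cG\wedge T_\cG$. One must also handle the passage between the Koszul-type definition of the pulled-back partial connection and the pointwise contraction; this is exactly where the factorization through $\eta$ is used. Finally, dualizing recovers the statement for $\sN_\cF\to f^*\sN_\cG$, since the dual connections of $\sN_\cF$ and $f^*\sN_\cG$ are the induced ones and dualizing commutes with pull-back.
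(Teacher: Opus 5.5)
Your proof is correct and complete. The paper gives no argument for this lemma; it only says that it ``can be easily verified''. Your dual computation fills in that omitted verification cleanly:
\begin{itemize}
\item you reduce by the Leibniz rule to the generators $f^{-1}\beta$, with $\beta$ a local section of $\sN_\cG^\vee$;
\item you identify $\eta$ with the transpose of $\dd f|_{T_\cF}\colon T_\cF\to f^*T_\cG$, which is exactly what the morphism-of-foliated-manifolds hypothesis provides;
\item you check $\iota_v f^*\dd\beta=\sum_k g_k\, f^*(\iota_{w_k}\dd\beta)$ pointwise.
\end{itemize}
The two side checks are also right: $\iota_w\dd\beta\in\sN_\cG^\vee$ for $w\in T_\cG$ by integrability, and dualizing commutes with pull-back of partial connections.

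Working on the conormal side is the convenient choice. Pull-back of forms makes sense for an arbitrary holomorphic $f$. The defining formula $\nablaBott(v)(\pi(t))=\pi([v,t])$, by contrast, would force you to compare brackets of vector fields on $X$ with brackets on $Y$, and these fields are not $f$-related when $f$ is not a submersion. That route can be carried out in local coordinates, but it needs an extra cancellation using $\pi(w_k)=0$ and is messier.

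One cosmetic slip: the phrase ``modulo $(T_\cF)^\perp$'' is misplaced. Since $(T_\cF)^\perp=\sN_\cF^\vee$ is a subsheaf of $\Omega^1_X$, the identity you need is an equality of $1$-forms on the nose, and that is exactly what you then prove.
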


\subsection{Projective connections on foliated vector bundles}

Let $\cF$ be a regular foliation on a complex manifold $X$, and let $(\sE,\nabla)$ be a foliated vector bundle of rank $r$. 

\begin{defn}
$\, $
\begin{enumerate}
\item A \emph{connection} (resp. a \emph{flat connection}) on $(\sE,\nabla)$ is a connection (resp. a flat connection) $\square$ on $\sE$ such that $\nabla$ coincides with the composition 
\begin{center}
\begin{tikzcd}
\square_\cF \colon \sE \ar[r, "\square"] & \Omega_X^1\otimes\sE \ar[r, "{\textup{pr} \otimes \textup{id}}"] & \Omega_\cF^1 \otimes\sE.
\end{tikzcd}
\end{center}

We say that $(\sE,\nabla)$ is \emph{flat} if $(\sE,\nabla)$ admits a flat connection.

\item A \emph{projective connection} (resp. a \emph{flat projective connection}) on $(\sE,\nabla)$ is a projective connection (resp. a flat projective connection) $\square$ on $\sE$ such that the diagram  
\begin{center}
\begin{tikzcd}
& & \sD^1(\sE) \ar[d] \\
T_\cF \ar[r, hook] \ar[urr, bend left=15, "\nabla"] & T_X \ar[r,"\square"] & \sD^1(\sE)/\sO_X \textup{id}
\end{tikzcd}
\end{center}
commutes. 

We say that $(\sE,\nabla)$ is \emph{projectively flat} if $(\sE,\nabla)$ admits a flat projective connection. 
\end{enumerate}
\end{defn}

\begin{rem}\label{remark:projective_connection_foliated_vb}
Let $\square$ be a projective connection (resp. a flat projective connection) on $\sE$ given by 
an open covering $(U_i)_{i\in I}$ of $X$ and connections (resp. flat projective connections) $\square_i$ on $\sE|_{U_i}$ such that $\square_i|_{U_i\cap U_j}=\square_j|_{U_i\cap U_j}+\alpha_{ij}\otimes \textup{id}$ for some holomorphic $1$-form (resp. closed holomorphic $1$-form) $\alpha_{ij}$ on $U_i\cap U_j$. Then $\square$ is 
a projective connection (resp. a flat projective connection) on $(\sE,\nabla)$ if and only if 
$\nabla|_{U_i}= {\square_\cF}|_{U_i}+\beta_i \otimes \textup{id}$ for some $\beta_i\in H^0\left(U_i,\Omega_\cF^1|_{U_i}\right)$.
\end{rem}

\begin{rem}
If $(\sE,\nabla)$ is flat (resp. projectively flat), then $\sE$ is flat (resp. projectively flat). 
\end{rem}

\begin{lemma}\label{lemma:existence_horizontal_sections}
Let $x \in X$ be any point. Then there exists an open neighborhood $U$ of $x$ such that 
$(\sE|_U,\nabla|_U)$ is flat.
\end{lemma}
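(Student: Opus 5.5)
We work in local coordinates adapted to the foliation. By the Frobenius theorem, the point $x$ has an open neighborhood $U \cong V \times W$, where $V \subseteq \mathbb{C}^p$ and $W \subseteq \mathbb{C}^q$ are polydiscs with coordinates $(z_1,\ldots,z_p)$ and $(w_1,\ldots,w_q)$. On $U$, the foliation $\cF|_U$ is induced by the projection $\textup{pr}_W \colon U \to W$, so $T_\cF|_U$ is spanned by $\partial/\partial z_1,\ldots,\partial/\partial z_p$. The plan is to produce a frame $(e_1,\ldots,e_r)$ of $\sE|_U$ consisting of $\nabla$-horizontal sections, meaning $\nabla(e_k)=0$ in $\Omega_\cF^1\otimes\sE$. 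Given such a frame, we define $\square$ as the unique connection on $\sE|_U$ with $\square(e_k)=0$ for all $k$. This connection is flat, since its connection matrix vanishes. Moreover, its $\cF$-part $\square_\cF$ is the $\cF$-connection killing the $e_k$. Two $\cF$-connections that agree on a frame agree everywhere, by the Leibniz rule. Hence $\square_\cF=\nabla|_U$, and $\square$ is a flat connection on $(\sE|_U,\nabla|_U)$, as the definition requires.

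To build the horizontal frame, we shrink $U$ so that $\sE|_U$ is trivial, with a frame $\textbf{s}$. We write $\nabla(\textbf{s}) = A\otimes \textbf{s}$, where $A=\sum_{a=1}^p A_a\,\dd z_a$ and the $A_a$ are holomorphic $r\times r$ matrix functions on $U$. The $\dd z_a$ give a frame of $\Omega^1_\cF|_U$. Flatness of $\nabla$ is then the zero-curvature condition along the leaves:
\[
\partial_{z_a} A_b - \partial_{z_b} A_a + [A_a,A_b]=0 \quad \text{for all } a,b,
\]
with the ordering and sign conventions fixed by how $\nabla$ acts on frames. We look for a matrix $g \in \textup{GL}(r,\sO(U))$ with $\partial_{z_a} g = -g A_a$ for all $a$, again with signs adjusted to the convention. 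This is a system of linear PDEs in the $z$ variables only, and $w$ enters as a holomorphic parameter. Because of the zero-curvature condition, the Frobenius theorem gives a unique solution with initial condition $g(0,w)=\textup{id}$. Then $\textbf{e}:=g\cdot\textbf{s}$ satisfies $\nabla(\textbf{e})=0$.

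An alternative way to get the solution is to integrate successively along $z_1$, then $z_2$, and so on, using the flatness condition to check compatibility. We also need the solution to depend holomorphically on $w$. This holds because solutions of holomorphic ODEs depend holomorphically on parameters and on initial data, and the domain is a polydisc.

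The main point requiring care is this holomorphic dependence on the transverse parameter $w$, together with existence on the whole polydisc. Existence is not a problem for a linear system, and simply connected leaf plaques ensure there is no monodromy. Shrinking $U$ again if necessary handles any remaining issue. One can also argue more conceptually: on each plaque $V\times\{w\}$, the restriction $\nabla|_{V\times\{w\}}$ is an honest flat connection, so it has horizontal frames given by parallel transport from $(0,w)$. Holomorphy in $w$ follows from the groupoid picture, namely the equivalence with $\textup{G}$-equivariant bundles recalled above, or equivalently from Remark~\ref{remark:Ehresmann_versus_Koszul}. In that remark, the foliation $\cH$ on $\mathbb{V}(\sE^\vee)$ is holomorphic and projects onto $\cF$, so local holonomy along plaques is holomorphic.
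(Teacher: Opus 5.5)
Your proof is correct. Its outer shape matches the paper's: find a $\nabla$-horizontal frame $(e_1,\ldots,e_r)$ of $\sE$ near $x$, then take the connection $\square$ with $\square(e_k)=0$. Its $\cF$-part agrees with $\nabla$ because both are $\cF$-connections killing the same frame.

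The difference is in how you produce the horizontal frame. The paper passes to $E=\mathbb{V}(\sE^\vee)$ and notes that $\nabla(e)=0$ exactly when $e(U)$ is invariant under the foliation $\cH$ of Remark~\ref{remark:Ehresmann_versus_Koszul}. It then quotes \cite[Proposition 2.7]{molino} for a local frame of such invariant sections; your closing ``conceptual'' remark is essentially this route. Your main argument instead integrates $\partial_{z_a}g=-gA_a$ directly in Frobenius coordinates, with $w$ as a holomorphic parameter.

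The paper's version is shorter and ties in with the foliated-bundle picture used elsewhere in Section~\ref{section:connections_foliated_bundles}. However, it hands the analysis to a reference written for $C^\infty$ foliations. Yours is self-contained, stays in the holomorphic category, and shows exactly where flatness enters.

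Two small points to tidy up.
\begin{enumerate}
\item Use the paper's convention $\nabla^1(\alpha\otimes e)=\dd_\cF\alpha\otimes e-\alpha\wedge\nabla(e)$ with $\nabla(\textbf{s})=A\otimes\textbf{s}$. Then the curvature matrix is $\dd_\cF A-A\wedge A$, so flatness reads $\partial_{z_a}A_b-\partial_{z_b}A_a=[A_a,A_b]$. This is precisely the compatibility condition for your system; the sign you displayed corresponds to the column convention.
\item You should state why $g$ is invertible. For instance, $\det g$ satisfies $\partial_{z_a}\det g=-\textup{tr}(A_a)\det g$ with $\det g(0,w)=1$, so it never vanishes. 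Alternatively, simply shrink $U$ around $x$.
\end{enumerate}
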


\begin{proof}
Let $E:=\mathbb{V}(\sE^\vee)$ with projection $\pi\colon E \to X$, and let $\cH$ be the foliation on $E$ induced by $\nabla$ (see Remark \ref{remark:Ehresmann_versus_Koszul}). Let $e$ be any local section of $\sE$ over some open subset $U$ of $x$, viewed as a section of $\pi$ over $U$. A simple computation shows that $\nabla(e)=0$ if and only if $e(U)$ is invariant under $\cH|_{\pi^{-1}(U)}$. It then follows from \cite[Proposition 2.7]{molino} that there exists an open neighborhood $U$ of $x$ and a frame $(e_1,\ldots,e_r)$ of $\sE$ over $U$ such that $\nabla(e_i)=0$ for any index $i$. The (flat) connection $\square$ on $\sE|_U$ such that $\square(e_i)=0$ for any index $i$ gives a flat connection $\square$ on $(\sE|_U,\nabla|_U)$. This finishes the proof of the lemma.
\end{proof}

\begin{lemma}\label{lemma:trans_proj_flat_frames}
The foliated vector bundle $(\sE,\nabla)$ is projectively flat if and only if there exists a covering $(U_i)_{i \in I}$ of $X$ by analytically open sets as well as frames $(e_1^i,\ldots,e_r^i)$ of $\sE$ over $U_i$ satisfying the following properties. 
\begin{enumerate}
\item There exists $\beta_i\in H^0\left(U_i,\Omega_\cF^1|_{U_i}\right)$ such that
\[
\nabla(\textbf{e}_i)=\beta_i\otimes \textbf{e}_i,
\]
where 
\[
\textbf{e}_i = 
\begin{pmatrix}
e_1^i \\
\vdots \\
e_r^i
\end{pmatrix}
.
\]
\item There exist locally constant matrices $M_{ij} \in \textup{GL}(r,\sO_X(U_i))$ as well as nowhere vanishing holomorphic functions $f_{ij}$ on $U_i\cap U_j$ such that 
\[
\textbf{e}_i= f_{ij} M_{ij} \cdot \textbf{e}_j
\]
on $U_i \cap U_j$.
\end{enumerate}
\end{lemma}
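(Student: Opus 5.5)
The plan mirrors the proof of Lemma \ref{lemma:proj_flat_frames}, with the extra partial connection carried along.

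\emph{Forward direction.} Suppose $\square$ is a flat projective connection on $(\sE,\nabla)$. By the discussion preceding Lemma \ref{lemma:proj_flat_frames}, we choose an open covering $(U_i)_{i\in I}$ and flat connections $\square_i$ on $\sE|_{U_i}$ lifting $\square$, with $\square_i=\square_j+\alpha_{ij}\otimes\textup{id}$ for closed holomorphic $1$-forms $\alpha_{ij}$. By Remark \ref{remark:projective_connection_foliated_vb}, $\nabla|_{U_i}=(\square_i)_\cF+\beta_i\otimes\textup{id}$ for some $\beta_i\in H^0(U_i,\Omega^1_\cF|_{U_i})$. After shrinking the $U_i$, we pick $\square_i$-flat frames $\textbf{e}_i$. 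Then $(\square_i)_\cF(\textbf{e}_i)=0$, so $\nabla(\textbf{e}_i)=\beta_i\otimes\textbf{e}_i$, which is property (1). Property (2) follows verbatim from the computation in the proof of Lemma \ref{lemma:proj_flat_frames}: writing $\alpha_{ij}=\dd g_{ij}$, we get $N_{ij}=\exp(-g_{ij})M_{ij}$ with $M_{ij}$ locally constant.

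\emph{Converse.} Given frames satisfying (1) and (2), we let $\square_i$ be the flat connection on $\sE|_{U_i}$ with $\square_i(\textbf{e}_i)=0$. As in Lemma \ref{lemma:proj_flat_frames}, the relation $\textbf{e}_i=f_{ij}M_{ij}\cdot\textbf{e}_j$ gives $\square_i=\square_j-f_{ij}^{-1}\dd f_{ij}\otimes\textup{id}$. The difference form $f_{ij}^{-1}\dd f_{ij}$ is closed, so the $\square_i$ glue to a flat projective connection $\square$ on $\sE$. It remains to check the compatibility of Remark \ref{remark:projective_connection_foliated_vb}. The operators $\nabla|_{U_i}$ and $(\square_i)_\cF+\beta_i\otimes\textup{id}$ are both $\cF$-connections, so their difference is $\sO_X$-linear. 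They agree on the frame $\textbf{e}_i$, since both send it to $\beta_i\otimes\textbf{e}_i$. Hence they coincide, and $\square$ is a flat projective connection on $(\sE,\nabla)$.

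\emph{Main obstacle.} The only delicate point is the sign and normalization bookkeeping in the relation $\square_i=\square_j+\alpha_{ij}\otimes\textup{id}$ when passing between transition functions and differences of connections. In particular, in the converse the correct difference form is $\dd\log f_{ij}$ rather than $\dd f_{ij}$; this causes no trouble because $\dd\log f_{ij}$ is closed. Everything else is routine.
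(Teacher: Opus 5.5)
Your proposal is correct and takes the same route as the paper. The paper's proof just says the claim follows from Lemma \ref{lemma:proj_flat_frames} together with Remark \ref{remark:projective_connection_foliated_vb}, and you spell out exactly that combination in both directions: flat local lifts give the frames and the $\beta_i$, and conversely the $\sO_X$-linearity argument gives $\nabla|_{U_i}=(\square_i)_\cF+\beta_i\otimes\textup{id}$. Your observation that the transition form is $\dd\log f_{ij}=f_{ij}^{-1}\dd f_{ij}$, rather than the $\dd f_{ij}$ written in the converse part of the proof of Lemma \ref{lemma:proj_flat_frames}, is right and affects nothing.
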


\begin{proof}The claim follows easily from Lemma \ref{lemma:proj_flat_frames} and Remark \ref{remark:projective_connection_foliated_vb} above.
\end{proof}

The proof of Lemma \ref{lemma:trans_flat_frames} below is very similar to that of Lemma \ref{lemma:trans_proj_flat_frames}. We leave it to the reader to spell out the details.

\begin{lemma}\label{lemma:trans_flat_frames}
The foliated vector bundle $(\sE,\nabla)$ is flat if and only if there exists a covering $(U_i)_{i \in I}$ of $X$ by analytically open sets as well as frames $(e_1^i,\ldots,e_r^i)$ of $\sE$ over $U_i$ satisfying the following properties. 
\begin{enumerate}
\item We have
\[
\nabla(\textbf{e}_i)=0,
\]
where 
\[
\textbf{e}_i = 
\begin{pmatrix}
e_1^i \\
\vdots \\
e_r^i
\end{pmatrix}
.
\]
\item There exist locally constant matrices $M_{ij} \in \textup{GL}(r,\sO_X(U_i))$ such that 
\[
\textbf{e}_i= M_{ij} \cdot \textbf{e}_j
\]
on $U_i\cap U_j$.
\end{enumerate}
\end{lemma}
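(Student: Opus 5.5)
The plan is to argue exactly as in Lemma \ref{lemma:trans_proj_flat_frames}, replacing Lemma \ref{lemma:proj_flat_frames} by its flat counterpart. That counterpart says: $\sE$ is flat if and only if there are a covering $(U_i)_{i\in I}$ and frames $\textbf{e}_i$ over $U_i$ with locally constant transition matrices $M_{ij}$. This is the classical description of flat bundles through horizontal frames. It follows from the computation in the proof of Lemma \ref{lemma:proj_flat_frames} with all $\alpha_{ij}=0$, so that $\dd N_{ij}=0$.

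For the forward direction, let $\square$ be a flat connection on $(\sE,\nabla)$. Since $\square$ is flat, every point has a neighbourhood $U_i$ carrying a frame $\textbf{e}_i$ with $\square(\textbf{e}_i)=0$; I will shrink the $U_i$ so that the intersections are small enough as well. Any two such frames satisfy $\textbf{e}_i=N_{ij}\cdot\textbf{e}_j$. Applying $\square$ gives $0=\dd N_{ij}\otimes \textbf{e}_j$, so $N_{ij}=M_{ij}$ is locally constant, which is condition (2). For condition (1), the definition of a connection on a foliated bundle says $\nabla=\square_\cF=(\textup{pr}\otimes\textup{id})\circ\square$. Hence $\nabla(\textbf{e}_i)=(\textup{pr}\otimes\textup{id})(\square\textbf{e}_i)=0$.

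Conversely, suppose frames $\textbf{e}_i$ satisfying (1) and (2) are given. On each $U_i$, define $\square_i$ to be the flat connection with $\square_i(\textbf{e}_i)=0$. On $U_i\cap U_j$ we have $\textbf{e}_i=M_{ij}\cdot\textbf{e}_j$ with $\dd M_{ij}=0$, so $\square_i(\textbf{e}_j)=\square_i(M_{ij}^{-1}\cdot\textbf{e}_i)=0$. Thus $\square_i$ and $\square_j$ both kill the frame $\textbf{e}_j$ and must coincide. The $\square_i$ therefore glue to a global flat connection $\square$ on $\sE$. Finally, $\square_\cF$ and $\nabla$ are both $\cF$-connections that annihilate the frame $\textbf{e}_i$ on $U_i$: the first by construction, the second by (1). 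By the Leibniz rule they agree on $U_i$, hence everywhere. So $\square$ is a flat connection on $(\sE,\nabla)$.

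No step is a real obstacle; the argument is a direct transcription of the proof of Lemma \ref{lemma:proj_flat_frames}. The only point needing a little care is the shrinking in the forward direction. The $U_i$ should be chosen simply connected, with connected pairwise intersections, or else $M_{ij}$ is only locally constant rather than constant. That is harmless, since the statement only asks for locally constant matrices.
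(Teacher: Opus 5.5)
Your proof is correct and is the argument the paper intends. The paper leaves this lemma to the reader as the flat analogue of Lemma \ref{lemma:trans_proj_flat_frames}: horizontal frames with locally constant transition matrices, as in Lemma \ref{lemma:proj_flat_frames} with $\alpha_{ij}=0$, together with the compatibility $\nabla=\square_\cF$ checked on frames via the Leibniz rule, which is exactly what you do.
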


\begin{prop}\label{prop:projectively_flat_versus_flat}
The following statements hold. 
\begin{enumerate}
\item The foliated vector bundle $(\sE,\nabla)$ admits a projective connection if and only if $(\sE\otimes\sE^\vee,\nabla\otimes\nabla^\vee)$ admits a connection.
\item The foliated vector bundle $(\sE,\nabla)$ is projectively flat if and only if $(\sE\otimes\sE^\vee,\nabla\otimes\nabla^\vee)$ is flat.
\end{enumerate}
\end{prop}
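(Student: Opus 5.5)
The direct implications should be formal. Suppose $\square$ is a projective (resp. flat projective) connection on $(\sE,\nabla)$, given as in Remark~\ref{remark:projective_connection_foliated_vb} by local connections $\square_i$ on $\sE|_{U_i}$ with $\square_i=\square_j+\alpha_{ij}\otimes\textup{id}$. Take the induced connections $\square_i\otimes\square_i^\vee$ on $\sE\otimes\sE^\vee|_{U_i}$. The scalar term $\alpha_{ij}\otimes \textup{id}$ cancels in $\sE\otimes\sE^\vee$, so these glue to a global connection $D$ on $\sE\otimes\sE^\vee$. If $\square$ is flat, the $\square_i$ may be chosen flat, and then $D$ is flat. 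On $T_\cF$ we have $(\square_i)_\cF=\nabla|_{U_i}-\beta_i\otimes\textup{id}$, and again $\beta_i$ cancels after tensoring with the dual. Hence $D_\cF=\nabla\otimes\nabla^\vee$, so $D$ is a connection (resp. flat connection) on $(\sE\otimes\sE^\vee,\nabla\otimes\nabla^\vee)$.

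For the converse in (1), I would use an Atiyah-class type argument, made explicit with local connections. By Lemma~\ref{lemma:existence_horizontal_sections}, cover $X$ by open sets $U_i$ carrying connections $\nabla_i$ on $\sE|_{U_i}$ with $(\nabla_i)_\cF=\nabla|_{U_i}$. Write $D_i$ for the induced connection on $\sE\otimes\sE^\vee|_{U_i}$. If $\nabla_i-\nabla_j=A_{ij}\in \Omega^1_X\otimes\sE nd(\sE)$, then $D_i-D_j=\textup{ad}(A_{ij})$.

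The adjoint map $\textup{ad}\colon \sE nd(\sE)/\sO_X\textup{id}\to \sE nd(\sE nd(\sE))$ is injective. Since $\mathfrak{gl}_r$ is reductive, it admits a $\textup{GL}_r$-equivariant retraction $\psi\colon \mathfrak{gl}(\mathfrak{gl}_r)\to \mathfrak{gl}_r/\mathbb{C}$. By equivariance this gives a global $\sO_X$-linear retraction $\psi\colon \sE nd(\sE nd(\sE))\to \sE nd(\sE)/\sO_X\textup{id}$.

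Given a connection $D$ on $(\sE\otimes\sE^\vee,\nabla\otimes\nabla^\vee)$, set
\[
\nabla_i':=\nabla_i+\widetilde{\psi(D-D_i)},
\]
where the tilde denotes any local lift to $\Omega^1_X\otimes\sE nd(\sE)$. Then
\[
\nabla_i'-\nabla_j'=A_{ij}-\psi(\textup{ad}A_{ij}) \equiv 0 \mod \Omega^1_X\otimes\textup{id},
\]
so the $\nabla'_i$ define a projective connection. Since $D-D_i$ vanishes on $T_\cF$, the correction term restricts to a scalar on $T_\cF$, so Remark~\ref{remark:projective_connection_foliated_vb} gives compatibility with $\nabla$.

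The main obstacle is the converse in (2): showing that this projective connection is flat when $D$ is flat. The difficulty is that $D$ need not be of the form $\textup{ad}(\nabla')$ and need not respect composition in $\sE nd(\sE)$.

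My first attempt would use the Tannakian structure. Flatness of $D$ lets us apply Lemma~\ref{lemma:trans_flat_frames} to $\sE\otimes\sE^\vee$, giving local $D$-flat frames with locally constant transition matrices. I would then try to replace $D$ by a flat connection that is compatible with the algebra structure of $\sE nd(\sE)$. The idea is to project $D$ onto connections that are derivations of the multiplication $m\colon \sE nd(\sE)\otimes\sE nd(\sE)\to\sE nd(\sE)$, while keeping $D_\cF$ fixed, which is possible because $\nabla\otimes\nabla^\vee$ already is a derivation.

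Once $D=\textup{ad}(\nabla'_i)$ on each $U_i$, flatness of $D$ forces $\textup{ad}(\textup{K}(\nabla_i'))=0$. Hence $\textup{K}(\nabla'_i)$ is scalar, and the projective connection is flat.

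If this projection step fails in general, the fallback is the frame description: deduce from local flat frames of $\sE nd(\sE)$ that the associated projective frame bundle has locally constant transition functions in $\textup{PGL}(r,\mathbb{C})$. Lemma~\ref{lemma:trans_proj_flat_frames} would then conclude.
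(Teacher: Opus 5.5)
Your argument for (1) and for the direct implication in (2) is the paper's argument. Your $\psi$ is the section $\Lambda$ that the paper takes from Biswas, and your $\nabla_i'$ are its $\overline{\square}_i$. The paper chooses $\sN^\vee$-valued lifts; your remark that any lift is scalar on $T_\cF$ makes this unnecessary.

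\textbf{The gap.} The converse in (2) is not proved, and neither of your suggested routes works as stated. Take the $\nabla_i$ flat, as Lemma~\ref{lemma:existence_horizontal_sections} allows. Write $D-D_i=\textup{ad}(\textup{a}_i)+\textup{B}_i$, with $\bar{\textup{a}}_i=\psi(D-D_i)$ and $\textup{B}_i$ taking values in $\ker\psi$.

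\emph{The projection route.} Your projection onto derivations is exactly the replacement of $D$ by $\nabla_i'\otimes(\nabla_i')^\vee=D-\textup{B}_i$, and this does not preserve flatness. Apply $\psi$ to $\textup{K}(D)=0$ in a $\nabla_i$-flat frame. Since $\ker\psi$ is $\textup{GL}_r$-stable, the cross terms $[\textup{ad}\,\textup{a}_i,\textup{B}_i]$ are killed, and you get
\[
\overline{\textup{K}(\nabla_i')}=-\psi(\textup{B}_i\wedge\textup{B}_i)\in H^0\big(U_i,\Omega^2_{U_i}\otimes(\sE\hspace{-0.07cm}\textit{nd}_{\,\sO_{U_i}}(\sE|_{U_i})/\sO_{U_i}\textup{id})\big).
\]
Because $\ker\psi$ is not a Lie subalgebra, there is no reason for the right-hand side to vanish. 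So the situation ``$D=\textup{ad}(\nabla_i')$'' is never reached by projecting.

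\emph{The frame fallback.} It fails for the same reason. $D$-flat frames of $\sE\otimes\sE^\vee$ have constant transition matrices in $\textup{GL}(r^2,\mathbb{C})$. Nothing forces these to preserve the composition law, i.e.\ to come from $\textup{PGL}(r,\mathbb{C})$, and that is what Lemma~\ref{lemma:trans_proj_flat_frames} needs.

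\textbf{The paper's treatment of this step.} The paper runs the same construction and disposes of this point with ``one readily checks that if $\textup{D}$ is flat, then $\square$ is flat as well''. By the identity above, flatness is automatic when $\textup{B}_i\wedge\textup{B}_i=0$, for example when $\sN$ has rank $1$ (since $\textup{B}_i$ vanishes on $T_\cF$). It is not automatic in general, as the following example shows.
\begin{itemize}
\item Let $X=\mathbb{C}^2$ with coordinates $x,y$, with $T_\cF=0$ and $\sE=\sO^{\oplus 2}$.
\item Let $\psi$ be the orthogonal projection onto $\textup{ad}(\mathfrak{sl}_2)$ for the invariant form $\textup{tr}(S\circ T)$ on $\textup{End}(\mathfrak{gl}_2)$, and let $e,h$ be the usual elements of $\mathfrak{sl}_2$.
\item Define $P,Q\in\textup{End}(\mathfrak{gl}_2)$ by $P(\textup{id})=h$, $P(\mathfrak{sl}_2)=0$, $Q(\textup{id})=0$, and $Q(v)=\textup{tr}(ev)\,\textup{id}$ for $v\in\mathfrak{sl}_2$.
\end{itemize}
Then $\psi(P)=\psi(Q)=0$, $\psi([P,Q])=\tfrac12\bar e$ and $[P,[P,Q]]=0$. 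Hence $D=\dd+P\,\dd x+(Q-x[P,Q])\,\dd y$ is flat. The resulting connection $\overline{\square}=\dd-\tfrac{x}{2}e\,\dd y$ has curvature $-\tfrac12 e\,\dd x\wedge\dd y$, which is not scalar. Of course $\sO^{\oplus 2}$ is projectively flat; only the construction fails, not the statement.

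So you have correctly located the crux. Closing it needs an additional idea, for instance producing a \emph{flat} connection on $\sE\otimes\sE^\vee$ that is a derivation for composition, rather than projecting an arbitrary flat one.
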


\begin{proof}
Let $\square$ be a projective connection on $(\sE,\nabla)$ given by an open covering $(U_i)_{i\in I}$ of $X$ and connections $\square_i$ on $\sE|_{U_i}$ such that $\square_i=\square_j+\alpha_{ij}\otimes \textup{id}$ for some holomorphic $1$-form $\alpha_{ij}$ on $U_i\cap U_j$. Then the connections 
$\square_i \otimes \square^\vee_i$ on $\sE\otimes\sE^\vee|_{U_i}$ satisfy
\begin{align*}
\square_i \otimes \square^\vee_i & = \square_j \otimes \square^\vee_j + [\alpha_{ij}\otimes \textup{id}, \bullet\, ] \\
& =  \square_j \otimes \square^\vee_j 
\end{align*}
on $U_i \cap U_j$. One then readily checks that the $\square_i \otimes \square^\vee_i$ define a connection on $(\sE\otimes\sE^\vee,\nabla\otimes\nabla^\vee)$. Moreover, a standard calculation shows that $\textup{K}(\square_i \otimes \square^\vee_i) = 0$ if $\textup{K}(\square_i)=0$. Thus, if $(\sE,\nabla)$ is projectively flat, then $(\sE\otimes\sE^\vee,\nabla\otimes\nabla^\vee)$ is flat.

The proof of the converse statement is similar to that of \cite[Proposition 1.2]{biswas_semistability}, and so we leave some easy details to the reader. The first step of the argument of Biswas applies to show that the $\sO_X$-linear map
\[
\sE\hspace{-0.07cm}\textit{nd}_{\,\sO_X}(\sE)/ \sO_X\textup{id} \to  \sE\hspace{-0.07cm}\textit{nd}_{\,\sO_X}(\sE\hspace{-0.07cm}\textit{nd}_{\,\sO_X}(\sE))  
\]
defined by 
\[
\bar{u} \mapsto [u,\bullet\,]
\]
admits a section. Let 
\[
\Lambda \colon \Omega_X^1 \otimes \sE\hspace{-0.07cm}\textit{nd}_{\,\sO_X}(\sE\hspace{-0.07cm}\textit{nd}_{\,\sO_X}(\sE)) \to \Omega_X^1 \otimes (\sE\hspace{-0.07cm}\textit{nd}_{\,\sO_X}(\sE)/ \sO_X\textup{id})
\]
be the induced map. 

Let now $\textup{D}$ be any connection on $(\sE\otimes\sE^\vee,\nabla\otimes\nabla^\vee)$. Let $(U_i)_{i\in I}$ be an open covering of $X$, and let $\square_i$ be flat connections on $(\sE|_{U_i},\nabla|_{U_i})$ (see Lemma \ref{lemma:existence_horizontal_sections}). Let also
\[
\textup{A}_i \in H^0(U_i,\Omega_{U_i}^1 \otimes \sE\hspace{-0.07cm}\textit{nd}_{\,\sO_{U_i}}(\sE\hspace{-0.07cm}\textit{nd}_{\,\sO_{U_i}}(\sE|_{U_i})))
\] 
such that 
\[
\textup{D}|_{U_i}=\square_i \otimes \square^\vee_i + \textup{A}_i.
\]
Shrinking the $U_i$ further, we may assume that there exists
\[
\textup{a}_i \in H^0(U_i,\Omega_{U_i}^1 \otimes \sE\hspace{-0.07cm}\textit{nd}_{\,\sO_{U_i}}(\sE|_{U_i}))
\]
such that 
\[
\Lambda(\textup{A}_i)=\bar{\textup{a}}_i \in H^0(U_i,\Omega_{U_i}^1 \otimes (\sE\hspace{-0.07cm}\textit{nd}_{\,\sO_{U_i}}(\sE|_{U_i})/\sO_{U_i}\textup{id})).
\]
Note that both $D|_{U_i}$ and $\square_i \otimes \square^\vee_i$ are connections on $(\sE\otimes\sE^\vee|_{U_i},\nabla\otimes\nabla^\vee|_{U_i})$, so that
\[
\textup{A}_i \in H^0(U_i,\sN^\vee|_{U_i} \otimes \sE\hspace{-0.07cm}\textit{nd}_{\,\sO_{U_i}}(\sE\hspace{-0.07cm}\textit{nd}_{\,\sO_{U_i}}(\sE|_{U_i}))) \subseteq H^0(U_i,\Omega_{U_i}^1 \otimes \sE\hspace{-0.07cm}\textit{nd}_{\,\sO_{U_i}}(\sE\hspace{-0.07cm}\textit{nd}_{\,\sO_{U_i}}(\sE|_{U_i}))).
\]
As a consequence, we may also assume without loss of generality that 
\[
\textup{a}_i \in H^0(U_i,\sN^\vee|_{U_i} \otimes \sE\hspace{-0.07cm}\textit{nd}_{\,\sO_{U_i}}(\sE|_{U_i})) \subseteq H^0(U_i,\Omega_{U_i}^1 \otimes \sE\hspace{-0.07cm}\textit{nd}_{\,\sO_{U_i}}(\sE|_{U_i})).
\]

Let $\overline{\square}_i:=\square_i+\textup{a}_i$. By choice of $\textup{a}_i$, $\overline{\square}_i$ is a connection on $(\sE|_{U_i},\nabla|_{U_i})$. Moreover,
\[
\overline{\square}_i \otimes \overline{\square}_i^\vee = \square_i \otimes \square^\vee_i + [\textup{a}_i,\bullet\,],
\]
so that 
\[
\overline{\square}_i \otimes \overline{\square}_i^\vee - \overline{\square}_j \otimes \overline{\square}_j^\vee= \square_i \otimes \square^\vee_i - \square_j \otimes \square^\vee_j + [\textup{a}_i-\textup{a}_j,\bullet\,]=\textup{A}_j-\textup{A}_i + [\textup{a}_i-\textup{a}_j,\bullet\,]
\]
on $U_i\cap U_j$, and hence
\[
\Lambda ( \overline{\square}_i \otimes \overline{\square}_i^\vee - \overline{\square}_j \otimes \overline{\square}_j^\vee ) = 0.
\]
On the other hand, we have
\[
\overline{\square}_i \otimes \overline{\square}_i^\vee - \overline{\square}_j \otimes \overline{\square}_j^\vee= \left[\overline{\square}_i - \overline{\square}_j,\bullet\,\right],
\]
hence there exist $1$-forms $\alpha_{ij}$ on $U_i\cap U_j$ such that $\overline{\square}_i = \overline{\square}_j + \alpha_{ij} \otimes \textup{id}$. The $\overline{\square}_i$ then define a projective connection $\square$ on $(\sE,\nabla)$. Finally, one readily checks that if $\textup{D}$ is flat, then $\square$ is flat as well. This completes the proof of the proposition.
\end{proof}

\begin{lemma}\label{lemma:functoriality_flatness}
Let $f \colon (X,\cF) \to (Y,\cG)$ be a morphism of foliated manifolds, and let $(\sE,\nabla)$ be a foliated vector bundle on $Y$. 
\begin{enumerate}
\item If $\square$ is a connection $($resp. a projective connection$)$ on $(\sE,\nabla)$, then $f^*\square$ is a connection $($resp. a projective connection$)$ on $(f^*\sE,f^*\nabla)$. 
\item If $(\sE,\nabla)$ is flat $($resp. projectively flat$)$, then $(f^*\sE,f^*\nabla)$ is flat $($resp. projectively flat$)$ as well.
\end{enumerate}
\end{lemma}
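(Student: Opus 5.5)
Both parts will be checked locally, on an open covering, using the local descriptions of (projective) connections fixed earlier. Part (2) will follow from part (1) together with the facts that pull-backs of flat connections are flat and that pull-backs of closed $1$-forms are closed.

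For (1), let $\square$ be given by an open covering $(U_i)_{i\in I}$ of $Y$ and connections $\square_i$ on $\sE|_{U_i}$ satisfying $\square_i=\square_j+\alpha_{ij}\otimes\textup{id}$. In the case of an honest connection there is a single $\square_i=\square$.
\begin{enumerate}
\item As recalled in Paragraph \ref{paragraph:connections}, the pull-backs $f^*\square_i$ on $f^*\sE|_{f^{-1}(U_i)}$ satisfy $f^*\square_i=f^*\square_j+f^*\alpha_{ij}\otimes\textup{id}$, so they define a projective connection $f^*\square$ on $f^*\sE$.
\item By Remark \ref{remark:projective_connection_foliated_vb}, we have $\nabla|_{U_i}=(\square_i)_\cG+\beta_i\otimes\textup{id}$ with $\beta_i\in H^0(U_i,\Omega^1_\cG)$; in the non-projective case $\beta_i=0$.
\item By the same remark, it then suffices to prove the identity
\[
(f^*\square_i)_\cF=f^*\big((\square_i)_\cG\big),
\]
where the right-hand side is the pull-back of a partial connection, defined via $\eta\colon f^*\Omega^1_\cG\to\Omega^1_\cF$.
\item Granting this identity, $f^*\nabla=(f^*\square_i)_\cF+\eta(f^*\beta_i)\otimes\textup{id}$ on $f^{-1}(U_i)$, which is the required compatibility.
\end{enumerate}

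The identity in step (3) is the only genuine point. Both sides satisfy the Leibniz rule with respect to $\dd_\cF$. Hence it is enough to compare them on sections of the form $1\otimes f^{-1}e$. On such a section the left-hand side is the image of $f^*(\square_i e)$ under $f^*\Omega^1_Y\otimes\sE\to\Omega^1_X\otimes\sE\to\Omega^1_\cF\otimes\sE$. The right-hand side is its image under $f^*\Omega^1_Y\to f^*\Omega^1_\cG\overset{\eta}{\to}\Omega^1_\cF$. These two maps agree precisely because $f$ is a morphism of foliated manifolds, which is the defining factorization. For the same reason, $\dd_\cF(h\circ f)=\eta(f^*\dd_\cG h)$ for every local function $h$ on $Y$.

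For (2), if $\square$ is flat, then each $\square_i$ may be chosen flat with closed $\alpha_{ij}$. The $f^*\square_i$ are then flat, since pull-back preserves flatness, and $f^*\alpha_{ij}$ is closed. Hence $f^*\square$ is a flat (projective) connection on $(f^*\sE,f^*\nabla)$ by part (1).

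I expect no real obstacle here. The only care needed is the bookkeeping in step (3), namely checking that the two $\Omega^1_\cF$-valued operators coincide, and this reduces to the factorization that defines a morphism of foliated manifolds.
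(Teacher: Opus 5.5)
Your proposal is correct and follows the same route as the paper, which simply notes that $\nabla$ is the composition of $\square$ with $\textup{pr}\otimes\textup{id}\colon \Omega^1_Y\otimes\sE\to\Omega^1_\cG\otimes\sE$ and says the lemma follows from the definitions of $f^*\nabla$ and $f^*\square$. Your step (3) is exactly the verification the paper leaves implicit. It shows that $(f^*\square_i)_\cF=f^*\big((\square_i)_\cG\big)$ by reducing to the factorization $f^*\Omega^1_Y\to f^*\Omega^1_\cG\to\Omega^1_\cF$ that defines a morphism of foliated manifolds.
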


\begin{proof}
Let $\square$ be a connection on $(\sE,\nabla)$. By definition, $\nabla$ coincides with the composition 
\begin{center}
\begin{tikzcd}
\sE \ar[r, "\square"] & \Omega_Y^1\otimes\sE \ar[r, "{\textup{pr} \otimes \textup{id}}"] & \Omega_\cG^1 \otimes\sE.
\end{tikzcd}
\end{center}
The lemma then follows easily from the very definitions of $f^*\nabla$ and $f^*\square$.
\end{proof}

The following Lemma treat the converse statements in a special setting.

\begin{lemma}\label{lemma:descent}
Let $f \colon X \to Y$ be a surjective submersion with connected fibers between complex manifolds, and let $\cF$ be the regular foliation on $X$ induced by $f$. Suppose that $f$ is proper. Let $\sE$ be a vector bundle on $Y$, and let $\nabla$ be the flat $\cF$-connection on $\sG:=f^*\sE$ as in Example \ref{exmp:canonical_connection_pull_back_foliation}. 
\begin{enumerate}
\item If $(\sG,\nabla)$ admits a connection $($resp. a flat connection$)$, then $\sE$ admits a connection $($resp. a flat connection$)$. 
\item If $(\sG,\nabla)$ admits a projective connection $($resp. a flat projective connection$)$, then $\sE$ admits a projective connection $($resp. a flat projective connection$)$. 
\end{enumerate}
\end{lemma}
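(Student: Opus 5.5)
The idea is to descend the given (projective) connection on $\sG=f^*\sE$ by pushing forward along $f$, using that $f$ is proper with connected fibers, so that $f_*\sO_X=\sO_Y$ and, by the projection formula, $f_*f^*\sE\cong\sE$. I would first treat connections and then reduce the projective case to it via Proposition~\ref{prop:projectively_flat_versus_flat}.

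\emph{Connections.} Let $\square$ be a connection on $(\sG,\nabla)$. The key point is that $\square$ maps the sheaf $f^{-1}\sE\subseteq\sG$ into $f^*\Omega^1_Y\otimes\sG=\sN_\cF^\vee\otimes\sG$, since $\square_\cF=\nabla$ kills $f^{-1}\sE$. I would show that the induced map $f^{-1}\sE\to f^*\Omega^1_Y\otimes\sG$ is constant along the fibers, and hence descends. Work locally: let $V\subseteq Y$ be small and let $(e_1,\dots,e_r)$ be a frame of $\sE$ on $V$. Then
\[
\square(f^*e_k)=\sum_l a_{kl}\otimes f^*e_l,
\]
where the $a_{kl}$ are sections of $f^*\Omega^1_Y$ over $f^{-1}(V)$. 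Choose coordinates $y$ on $V$ and write $a_{kl}=\sum_m g_{klm}\,f^*\dd y_m$ with holomorphic functions $g_{klm}$ on $f^{-1}(V)$. Since $f$ is proper with connected fibers, the $g_{klm}$ are pulled back from $V$. So $a_{kl}=f^*b_{kl}$ for holomorphic $1$-forms $b_{kl}$ on $V$, and the local connections $\nabla_V(e_k)=\sum_l b_{kl}\otimes e_l$ on $\sE|_V$ are intrinsic, given by $f_*\square$ under the identification $f_*\sG\cong\sE$. They therefore glue to a connection $\nabla_\sE$ on $\sE$ with $f^*\nabla_\sE=\square$. If $\square$ is flat, then $f^*\textup{K}(\nabla_\sE)=\textup{K}(\square)=0$. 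Because $f$ is a surjective submersion, $f^*\Omega^2_Y\to\Omega^2_X$ is injective, so $\textup{K}(\nabla_\sE)=0$.

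\emph{Projective connections.} By Proposition~\ref{prop:projectively_flat_versus_flat}, a projective (resp.\ flat projective) connection on $(\sG,\nabla)$ yields a connection (resp.\ flat connection) on $(\sG\otimes\sG^\vee,\nabla\otimes\nabla^\vee)=(f^*(\sE\otimes\sE^\vee),\text{canonical})$. The canonical $\cF$-connection on $f^*(\sE\otimes\sE^\vee)$ is indeed $\nabla\otimes\nabla^\vee$. By the first part, $\sE\otimes\sE^\vee$ then admits a connection (resp.\ flat connection). To return to $\sE$, I would apply Proposition~\ref{prop:projectively_flat_versus_flat} on $Y$ with the foliation by points, i.e.\ $T_\cG=0$. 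This gives a projective connection (resp.\ flat projective connection) on $\sE$. Equivalently, one can rerun the Biswas-type argument directly on $Y$.

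\emph{Main obstacle.} The only subtle step is the fiberwise constancy of the coefficients $g_{klm}$. This uses properness and connectedness of the fibers through $f_*\sO_X=\sO_Y$. It also requires that the $a_{kl}$ genuinely lie in $f^*\Omega^1_Y=\sN^\vee_\cF$, which is exactly the compatibility condition $\square_\cF=\nabla$ together with $\nabla(f^*e_k)=0$.
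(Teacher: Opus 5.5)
Your proposal is correct and follows essentially the same route as the paper: pull back a local frame of $\sE$, observe that the connection matrix of $\square$ has entries in $f^*\Omega^1_Y$ because $\square_\cF=\nabla$, descend its coefficients using properness and connected fibers, glue the local connections by uniqueness, and check flatness by a local curvature computation. The projective case is likewise reduced to the first part via Proposition~\ref{prop:projectively_flat_versus_flat}. Your explicit application of that proposition on $Y$ with the foliation by points (equivalently, Biswas's argument) just spells out the step the paper leaves implicit.
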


\begin{proof}
The second statement follows from the first one together with Proposition \ref{prop:projectively_flat_versus_flat}. Thus, we concentrate on the first assertion. 

Let $\square$ be connection on $(\sG,\nabla)$. Let $(e_1,\ldots,e_r)$ be a local from of $\sE$ on some open subset $U$ of $Y$ with $r$ the rank of $\sE$, and let $V:=f^{-1}(U)$. Let $\dd_V$ be the flat connection on $\sG|_V$ such that $\dd_V (f^*e_i)=0$ for any index $i$. Let also $\textup{A}_V\in H^0(V,\Omega^1_V \otimes \sE\hspace{-0.07cm}\textit{nd}_{\,\sO_V}(\sG|_V))$ such that $\square|_V=\dd_V + \textup{A}_V$. Note that both $\square|_V$ and $\dd_V$ are connections on $(\sG|_V,\nabla|_V)$, so that
$\textup{A}_V\in H^0(V,(f|_V)^*(\Omega^1_U \otimes \sE\hspace{-0.07cm}\textit{nd}_{\,\sO_U}(\sE|_U))) \subseteq H^0(V,\Omega^1_V \otimes \sE\hspace{-0.07cm}\textit{nd}_{\,\sO_V}(\sG|_V))$. This in turn implies that there exists $\textup{B}_U\in H^0(U,\Omega^1_U \otimes \sE\hspace{-0.07cm}\textit{nd}_{\,\sO_U}(\sE|_U))$ such that $\textup{A}_V=f^*\textup{B}_U$ since $f$ is proper with connected fibers. Let $\dd_U$ be the connection on $\sE_U$ such that $\dd_U e_i = 0$ for any index $i$. The connection $\textup{D}_U:=\dd_U+\textup{B}_U$ on $\sE|_U$ then satisfies $\square|_V = (f|_V)^*\textup{D}_U$. Note that $\textup{D}_U$ is uniquely determined by this property. As a consequence, there is a connection $\textup{D}$ on $\sE$ such that $\square=f^*\textup{D}$. Local calculations show that $\square$ is flat if and only if $\textup{D}$ is flat. This finishes the proof of the lemma.
\end{proof}

\section{Transversely projectively flat foliations: definition, examples and a vanishing theorem for Chern classes}\label{section:definition}

The key notion is that of a (transversely) projectively flat foliation.

\begin{defn}
Let $\cF$ be regular foliation with normal bundle $\sN$ on a complex connected manifold $X$. We say that $\cF$ is transversely flat (resp. transversely projectively flat) if the foliated vector bundle $(\sN,\nabla^\textup{B})$ is flat (resp. projectively flat).
\end{defn}

\begin{exmp}\label{example:linear_foliation}
Let $X$ be a complex manifold, and let $\omega_1,\ldots,\omega_q \in H^0(X,\Omega^1_X)$ be pointwise linearly independent closed holomorphic $1$-forms. The annihilator $T_\cF \subseteq T_X$ of $\sN^\vee=\omega_1\sO_X \oplus \cdots \oplus \omega_q \sO_X \subseteq \Omega_X^1$ thus defines a regular foliation $\cF$ of codimension $q$. In addition, we have $\nablaBott^\vee(\omega_i)=0$. The connection $\square$ on $\sN^\vee$ satisfying $\square(\omega_i)=0$ for any index $1 \le i \le q$ is a flat connection on $(\sN^\vee,\nablaBott^\vee)$. In other words, $\cF$ is transversely flat.

\medskip

In particular, linear foliations on complex tori are transversely flat.
\end{exmp}

\begin{exmp}
Let $T$ be a complex tori, and let $\alpha$ be a nonzero holomorphic $1$-form on $T$. Le $Y$ be a complex manifold, and let $\beta$ be nonzero closed $1$-form on $Y$. Then $\textup{pr}_T^* \alpha+ \textup{pr}_Y^*\beta$ defines a transversely flat foliation on $T \times Y$ by Example \ref{example:linear_foliation}.
\end{exmp}

\begin{exmp}
Let $\sL_1,\ldots,\sL_q$ be flat line bundles on a compact Kähler manifold $X$, and let $\omega_i \in H^0(X,\Omega^1_X\otimes \sL_i)$ be pointwise linearly independent holomorphic twisted $1$-forms. Then $\nabla^1(\omega_i)=0 \in H^0(X,\Omega_X^2\otimes \sL_i)$ by Hodge theory for unitary local systems. 
As in Example \ref{example:linear_foliation} above, the annihilator $T_\cF \subseteq T_X$ of $\sN^\vee=\sL_1^\vee \oplus \cdots \oplus \sL_q^\vee \subseteq \Omega_X^1$ then defines a transversely flat foliation $\cF$ of codimension $q$.
\end{exmp}

\begin{exmp}\label{example:submersion}
Let $\cG$ be a regular foliation on a complex manifold $Y$, and let $f \colon X \to Y$ be a submersion. Let also $\cF:=f^{-1}\cG$. We have $(\sN_\cF,\nablaBott^\cF) \cong (f^*\sN_\cG,f^*\nablaBott^\cG)$ by Lemma \ref{lemma:functoriality_Bott_connection}. If $\cG$ is transversely flat (resp. transversely projectively flat), then Lemma \ref{lemma:functoriality_flatness} applies to show that $\cF$ is transversely flat (resp. transversely projectively flat) as well. 
\end{exmp}

\begin{exmp}\label{example:blow_up}
Let $\cG$ be regular foliation of codimension $q$ on a complex manifold $Y$, and let $Z \subseteq Y$ be a closed submanifold of dimension at least $q$. Suppose in addition that $Z$ is everywhere transverse to $\cG$, and let $\beta\colon X \to Y$ be the blow-up of $Y$ along $Z$. Let $f \colon U \to \mathbb{C}^q$ be a submersion defining $\cG$ on some open subset $U \subseteq Y$, and let also $V:=\beta^{-1}(U)$. By our assumptions, the composition $f \circ (\beta|_V) \colon V \to \mathbb{C}^q$ is a submersion as well, so that $\cG$ induces a regular foliation $\cF$ on $X$ with $\sN_\cF \cong \beta^*\sN_\cG$. 

By Lemma \ref{lemma:functoriality_Bott_connection} together with Lemma \ref{lemma:functoriality_flatness} again, if $\cG$ is transversely flat (resp. transversely projectively flat), then so is $\cF$.
\end{exmp}

\begin{exmp}\label{example:conic_bundle}
Let $f \colon X \to Y$ be a conic bundle, where $X$ and $Y$ are complex manifolds. Suppose that the discriminant locus 
\[
\Delta:=\{y \in Y \,|\, f^{-1}(y) \textup{ is a singular conic}\}
\] 
is smooth, or equivalently, that any fiber of $f$ is a (possibly singular) reduced conic (see \cite[Lemma 4.7]{druel_bbcd2}). Let $\cG$ be a regular foliation of dimension $p$ at least $2$ on $Y$. Suppose that $\Delta$ is everywhere transverse to $\cG$. The holomorphic map $f$ factors locally in $Y$ as the blow-up of a smooth conic bundle along of codimension $2$ submanifold. By Examples \ref{example:submersion} and \ref{example:blow_up}, $\cG$ induces a regular foliation $\cF$ on $X$ with $\sN_\cF \cong \beta^*\sN_\cG$, and hence $\cF$ is transversely flat (resp. transversely projectively flat) if $\cG$ is. 
\end{exmp}

The following is the main result of this section. Note that Theorem \ref{thm_intro:vanishing} is an immediate consequence of Theorem \ref{thm:vanishing} below.

\begin{thm}\label{thm:vanishing}
Let $X$ be a complex manifold, and let $\cF$ be regular foliation of codimension $q \ge 2$ with normal bundle $\sN$. If $\cF$ is transversely projectively flat, then $(\sN,\nablaBott)$ admits a connection. In particular, we have $c_i(\sN)=0 \in H^i(X,\Omega_X^i)$ for any index $i \ge 1$.
\end{thm}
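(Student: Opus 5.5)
The plan is to compare two descriptions of the same obstruction. The first uses the canonical local \emph{torsion-free} connections coming from local leaf spaces. The second uses the local flat connections supplied by the projective flatness. The point of the hypothesis $q \ge 2$ is that a tensor of the form $c\otimes \textup{id}$ is never symmetric unless $c=0$.

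\emph{Step 1 (Bott-compatible local connections).} Cover $X$ by open sets $U_i$ on which $\cF$ is given by submersions $g_i\colon U_i\to\mathbb{C}^q$ with coordinates $y^i_1,\dots,y^i_q$. Then $\sN|_{U_i}\cong g_i^*T_{\mathbb{C}^q}$, and the frame $\partial/\partial y^i_k$ is $\nablaBott$-horizontal. Let $\dd_i$ be the flat connection on $\sN|_{U_i}$ killing this frame. It is a connection on $(\sN|_{U_i},\nablaBott|_{U_i})$, as in Lemma \ref{lemma:existence_horizontal_sections}. On overlaps, $\dd_i-\dd_j=\textup{A}_{ij}$ with $\textup{A}_{ij}\in H^0(U_i\cap U_j,\sN^\vee\otimes\sE\hspace{-0.07cm}\textit{nd}(\sN))$, since both restrict to $\nablaBott$ along $T_\cF$. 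Via $\sN\cong g^*T_{\mathbb{C}^q}$, the form $\textup{A}_{ij}$ is the pull-back of $\dd(\text{Jacobian})\cdot\text{Jacobian}^{-1}$ of the transition map $g_i\circ g_j^{-1}$. It is therefore a Hessian, i.e. a section of $\textup{S}^2\sN^\vee\otimes\sN$ (symmetric in the two $\sN^\vee$ slots). The Čech class $[\textup{A}]\in H^1(X,\sN^\vee\otimes\sE\hspace{-0.07cm}\textit{nd}(\sN))$ is exactly the obstruction to the existence of a connection on $(\sN,\nablaBott)$, so the goal is $[\textup{A}]=0$.

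\emph{Step 2 (use projective flatness).} By Lemma \ref{lemma:trans_proj_flat_frames} and Remark \ref{remark:projective_connection_foliated_vb}, there are local connections $\square_i$ on $(\sN|_{U_i},\nablaBott|_{U_i})$ with $\square_i-\square_j=\alpha_{ij}\otimes\textup{id}$. Correcting by $1$-forms extending the $\beta_i$ makes them restrict exactly to $\nablaBott$. Then $\alpha_{ij}$ vanishes on $T_\cF$, so $\alpha_{ij}\in H^0(U_i\cap U_j,\sN^\vee)$. Writing $\square_i=\dd_i+\textup{B}_i$ with $\textup{B}_i\in H^0(U_i,\sN^\vee\otimes\sE\hspace{-0.07cm}\textit{nd}(\sN))$, we get
\[
\textup{A}_{ij}=\alpha_{ij}\otimes\textup{id}-\textup{B}_i+\textup{B}_j .
\]
Now apply the antisymmetrization map $\textup{Alt}\colon\sN^\vee\otimes\sN^\vee\otimes\sN\to\Lambda^2\sN^\vee\otimes\sN$. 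The $\textup{A}_{ij}$ are symmetric, so
\[
\textup{Alt}(\alpha_{ij}\otimes\textup{id})=\textup{Alt}(\textup{B}_i)-\textup{Alt}(\textup{B}_j).
\]
Explicitly, $\textup{Alt}(c\otimes\textup{id})(u,v)=c(u)v-c(v)u$. For $q\ge2$ the map $c\mapsto\textup{Alt}(c\otimes\textup{id})$ is injective and has an $\sO_X$-linear left inverse: contract $\sN$ against the second $\sN^\vee$ slot, which gives $(1-q)c$. Applying this left inverse shows $\alpha_{ij}=\gamma_i-\gamma_j$ with $\gamma_i:=\frac{1}{1-q}\,\textup{contr}(\textup{Alt}(\textup{B}_i))\in H^0(U_i,\sN^\vee)$.

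\emph{Step 3 (gluing and Chern classes).} The connections $\square_i-\gamma_i\otimes\textup{id}$ still restrict to $\nablaBott$ along $T_\cF$, because $\gamma_i$ is a section of $\sN^\vee$. They now agree on overlaps, so they glue to a connection on $(\sN,\nablaBott)$. This connection is not necessarily flat, which is all the statement claims. The existence of a holomorphic connection on $\sN$ means the Atiyah class of $\sN$ vanishes. Since $c_i(\sN)\in H^i(X,\Omega^i_X)$ is, up to a constant, the trace of the $i$-th power of the Atiyah class, we get $c_i(\sN)=0$ for all $i\ge1$.

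The main obstacle is Step 2. One has to check carefully that $\textup{A}_{ij}$ is symmetric with the correct convention for which slot of $\sN^\vee\otimes\sE\hspace{-0.07cm}\textit{nd}(\sN)$ carries the form direction; this amounts to torsion-freeness of $\dd_i$ in the transverse directions. One must also check that the sign and normalization of the contraction identity $(1-q)c$ are right. This is precisely where $q\ge2$ enters: for $q=1$ the correction $\frac{1}{1-q}$ is undefined and the argument, as expected, breaks down.
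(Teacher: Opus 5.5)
Your argument is correct, but it is organized differently from the paper's.

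\textbf{The paper's route.} The paper never introduces the basic connections of local submersions. It works directly with the coframes $\alpha^i_1,\ldots,\alpha^i_q$ of $\sN^\vee\subseteq\Omega^1_X$ supplied by Lemma \ref{lemma:trans_proj_flat_frames}. From these it forms the $(q-1)$-forms $A^i_k=\alpha^i_1\wedge\cdots\wedge\widehat{\alpha^i_k}\wedge\cdots\wedge\alpha^i_q$. Using integrability and the Bott condition, it finds a single $1$-form $\eta_i$ with $\dd A^i_k=\eta_i\wedge A^i_k$ for all $k$ and $\eta_i|_{T_\cF}=(q-1)\beta_i$. It then differentiates $\textbf{A}_i=f_{ij}^{q-1}N_{ij}\cdot\textbf{A}_j$, with $N_{ij}$ locally constant, to get $(q-1)\,\dd f_{ij}/f_{ij}=\eta_i-\eta_j$. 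The connections $\dd_i+\frac{1}{q-1}\eta_i\otimes\textup{id}$ then glue.

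\textbf{Comparison.} Underneath, the mechanism is the same as yours. The form $\eta_i$ is essentially $(q-1)$ times an extension of $\beta_i$, plus the trace of the torsion of the local connection killing $\textbf{a}_i$. The factor $q-1$ there is exactly your contraction constant.

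What your formulation buys is that it is intrinsic and uses less. You only need local connections that are compatible with $\nablaBott$ up to scalars and differ by $\alpha_{ij}\otimes\textup{id}$. You never use flatness of the projective connection, whereas the paper needs flatness to have $\dd N_{ij}=0$. So you actually prove more: for $q\ge 2$, any projective connection on $(\sN,\nablaBott)$ lifts to a connection. You also identify the obstruction as a symmetric (torsion-free) class. The paper's computation, in exchange, is fully explicit in terms of differential forms and avoids the Hessian and torsion bookkeeping.

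\textbf{Two minor points.}
\begin{enumerate}
\item With your stated convention $\textup{Alt}(c\otimes\textup{id})(u,v)=c(u)v-c(v)u$, contracting $\sN$ against the $v$-slot gives $(q-1)c$, not $(1-q)c$. Contracting against the $u$-slot gives $(1-q)c$. The normalization of $\gamma_i$ must match the slot you contract, but only $q-1\neq 0$ matters.
\item The traces of powers of the Atiyah class give the components of the Chern character, not $c_i$ itself. Since the $c_i$ are polynomials in these components, the vanishing still follows; alternatively, cite Atiyah's theorem directly.
\end{enumerate}
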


Before proving the theorem below, we note the following corollary which is a special case of Theorem \ref{thm:vanishing}.

\begin{cor}
Let $X$ be a complex manifold. If $T_X$ is projectively flat, then $T_X$ admits a holomorphic connection.
\end{cor}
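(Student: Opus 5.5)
The plan is to apply Theorem~\ref{thm:vanishing} to the foliation by points. Let $\cF$ be the regular foliation on $X$ with $T_\cF=0$, that is, the foliation of dimension $0$ whose leaves are the points of $X$. Its codimension is $q=\dim X$, and its normal bundle is $\sN=T_X/T_\cF=T_X$. Moreover $\Omega^1_\cF=0$.

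First I will check that the foliated vector bundle $(\sN,\nablaBott)$ is just $T_X$ with no extra structure. Since $\Omega^1_\cF=0$, the Bott partial connection $\nablaBott\colon T_X\to \Omega^1_\cF\otimes T_X=0$ is the zero map. It is trivially flat, and the Leibniz rule holds because $\dd_\cF=0$.

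Next I will check that the compatibility conditions in the definition of a (projective) connection on a foliated vector bundle are vacuous here.
\begin{itemize}
\item A connection $\square$ on $(\sN,\nablaBott)$ only has to make $\square_\cF$, the projection of $\square$ to $\Omega^1_\cF\otimes\sN=0$, agree with $\nablaBott=0$. This holds for every $\square$.
\item A projective connection on $(\sN,\nablaBott)$ only has to make a diagram starting from $T_\cF=0$ commute, which holds for every $\square$. Equivalently, in Remark~\ref{remark:projective_connection_foliated_vb} one takes $\beta_i=0$.
\end{itemize}
Therefore connections (resp. flat projective connections) on $(\sN,\nablaBott)$ are exactly connections (resp. flat projective connections) on $T_X$. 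In particular, if $T_X$ is projectively flat, then $\cF$ is transversely projectively flat.

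When $\dim X\ge 2$, we have $q\ge 2$ and Theorem~\ref{thm:vanishing} applies. It yields a connection on $(\sN,\nablaBott)=(T_X,0)$, which by the identification above is a holomorphic connection on $T_X$. As a by-product, $c_i(T_X)=0$ in $H^i(X,\Omega^i_X)$ for all $i\ge 1$.

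There is no real obstacle beyond the unwinding of definitions above; all the work lies in Theorem~\ref{thm:vanishing}. The one point to flag is the dimension hypothesis. The corollary implicitly requires $\dim X\ge 2$, matching $q\ge 2$ in the theorem. For $\dim X=1$, every line bundle is projectively flat, but on a compact curve $T_X$ admits a holomorphic connection only if $\deg T_X=0$, i.e. only for elliptic curves. So in dimension one the statement can fail, and the corollary should be read with $\dim X\ge 2$.
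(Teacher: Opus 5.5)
Your proposal is correct and is exactly the paper's route: the corollary is stated as the special case of Theorem~\ref{thm:vanishing} for the foliation by points, so $T_\cF=0$, $\sN=T_X$ and $\nablaBott=0$, which makes the compatibility conditions vacuous. Your remark that the statement needs $\dim X\ge 2$ (to have $q\ge 2$) is also right. For instance, $T_{\mathbb{P}^1}$ is projectively flat, being a line bundle, but admits no holomorphic connection; the paper leaves this hypothesis implicit.
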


\begin{proof}[Proof of Theorem \ref{thm:vanishing}] By Lemma \ref{lemma:trans_proj_flat_frames}, there exists a covering $(U_i)_{i \in I}$ of $X$ by analytically open sets as well as frames $(\alpha_1^i,\ldots,\alpha_q^i)$ of $\sN^\vee\subseteq \Omega_X^1$ over $U_i$ satisfying the following properties. There exists $\beta_i\in H^0(U_i,\Omega^1_{\sF}|_{U_i})$ such that
\[
\nablaBott^\vee(\textbf{a}_i)=\beta_i\otimes \textbf{a}_i,
\]
where 
\[
\textbf{a}_i = 
\begin{pmatrix}
\alpha_1^i \\
\vdots \\
\alpha_q^i
\end{pmatrix}
.
\]
Moreover, there exist locally constant matrices $M_{ij} \in \textup{GL}(q,\sO_X(U_i))$ as well as nowhere vanishing holomorphic functions $f_{ij}$ on $U_{ij}:=U_i\cap U_j$ such that 
\[
\textbf{a}_i= f_{ij} M_{ij} \cdot \textbf{a}_j
\]
on $U_{ij}$. We have
\begin{align*}
\beta_i\otimes \textbf{a}_i & =  \nablaBott^\vee(\textbf{a}_i) \\
& = \nablaBott^\vee(f_{ij} M_{ij} \cdot \textbf{a}_j)\\
& =  f_{ij} M_{ij} \cdot  \nablaBott^\vee(\textbf{a}_j) + \dd_\cF f_{ij} \otimes (M_{ij} \cdot \textbf{a}_j) \\
& =  f_{ij} \beta_j \otimes M_{ij} \cdot \textbf{a}_j + \frac{\dd_\cF f_{ij}}{f_{ij}} \otimes \textbf{a}_i \\
& =  \beta_j \otimes \textbf{a}_i + \frac{\dd_\cF f_{ij}}{f_{ij}} \otimes \textbf{a}_i 
\end{align*}
on $U_{ij}$, or, equivalently,
\begin{equation}\label{eq:beta}
{\frac{\dd f_{ij}}{f_{ij}}}\bigg|_{T_{\cF}|_{U_{ij}}}=(\beta_i-\beta_j)|_{U_{ij}}.
\end{equation}

Now, set 
\[
A_k^i=\alpha_1^i \wedge \cdots \wedge \widehat{\alpha_k^i} \wedge \cdots \wedge \alpha_q^i \in H^0(U_i,\wedge^{q-1} \sN^\vee|_{U_i}) \subseteq H^0(U_i,\Omega_{U_i}^{q-1}).
\]
and 
\[
\textbf{A}_i = 
\begin{pmatrix}
A_1^i \\
\vdots \\
A_q^i
\end{pmatrix}
.
\]
Then, we have
\[
\nablaBott^\vee(\textbf{A}_i) = (q-1)\beta_i \otimes \textbf{A}_i,
\] 
where, by abuse of notation, we use the same symbol $\nablaBott^\vee$ to denote the connection on $\wedge^{q-1}\sN^\vee$ induced by $\nablaBott^\vee$. This is because
\begin{multline*}
\nablaBott^\vee(A_k^i)=\nablaBott^\vee(\alpha_1^i) \wedge \cdots \wedge \widehat{\alpha_k^i} \wedge \cdots \wedge \alpha_q^i+\cdots \\ +\alpha_1^i \wedge \cdots \wedge \widehat{\nablaBott^\vee(\alpha_k^i)} \wedge \cdots \wedge \alpha_q^i+\cdots +
\alpha_1^i \wedge \cdots \wedge \widehat{\alpha_k^i} \wedge \cdots \wedge \nablaBott^\vee(\alpha_q^i) = (q-1) \beta_i \otimes A_k^i.
\end{multline*}

On the other hand, there exist $1$-forms $\gamma_{kl}^i \in H^0\big(U_i,\Omega_{U_i}^1\big)$ such that 
\[
\dd\alpha_k^i = \sum_{l=1}^{q} \gamma_{kl}^i \wedge \alpha_l^i
\]
since $T_\cF$ is involutive. This immediately implies that there exist $1$-forms $\eta_{kl}^i \in H^0\big(U_i,\Omega_{U_i}^1\big)$ such that
\[
\dd A_k^i = \sum_{l=1}^{q} \eta_{kl}^i \wedge A_l^i,
\]
so that
\[
\nablaBott^\vee(A_k^i) = \sum_{l=1}^{q} \eta_{kl}^i|_{T_{\cF}|_{U_i}} \otimes A_l^i
\]
since $\nablaBott^\vee(v)(A_k^i)=\cL_v(A_k^i)=\iota_v \dd A_k^i$ for any section $v$ of $T_\cF$ over $U_i$
by the very definition of $\nablaBott^\vee$ (see \ref{exmp:bott_connection}). This in turn implies $\eta_{kl}^i|_{T_{\cF}|_{U_i}}=0$ if $l \neq k$ and $\eta_{kk}^i|_{T_{\cF}|_{U_i}}=(q-1)\beta_i$. In other words, $\eta_{kl}^i \in H^0(U_i,\sN^\vee|_{U_i})$ if $l \neq k$. We may therefore assume without loss of generality
\[
\eta_{kl}^i=0 \textup{ if } l\neq k
\] 
since 
\[
(-1)^{l} \alpha_l^i\wedge A_l^i = - \alpha_1^i\wedge\cdots \wedge \alpha_q^i= (-1)^{m} \alpha_m^i\wedge A_m^i
\]
for any indices $l$ and $m$, and $\alpha_m^i\wedge A_l^i=0$ if $l\neq m$. Note that $\eta_{kk}^i-\eta_{11}^i \in H^0(U_i,\sN^\vee|_{U_i})$ since $\eta_{ll}^i|_{T_{\cF}|_{U_i}}=(q-1)\beta_i$ for any index $l$. Thus, there exists a holomorphic function $c_i$ on $U_i$ such that 
\[
(\eta_{kk}^i-\eta_{11}^i)\wedge A_k^i=c_i \alpha_k^i \wedge A_k^i.
\]
Set 
\[
\eta_i=\eta_{11}^i+\sum_{l=1}^q c_l \alpha_l^i. 
\]
Then, we have
\begin{equation}\label{eq:eta}
\eta_i|_{T_{\cF}|_{U_i}}=\eta_{11}^i|_{T_{\cF}|_{U_i}}=(q-1)\beta_i
\end{equation}
and
\[
\eta_i \wedge A_k^i = \eta_{11}^i \wedge A_k^i + \sum_{l=1}^q c_l \alpha_l^i \wedge A_k^i = \eta_{11}^i \wedge A_k^i + c_k\alpha_k^i \wedge A_k^i = \eta_{kk}^i \wedge A_k^i = \dd A_k^i.
\]
By choice of the $A_k^i$, there exists a locally constant matrix $N_{ij} \in \textup{GL}(q,\sO_X(U_i))$ such that 
\[
\textbf{A}_i= f_{ij}^{q-1} N_{ij} \cdot \textbf{A}_j.
\]
Therefore, we have
\begin{align*}
\eta_i \wedge \textbf{A}_i & =  \dd \textbf{A}_i \\
& = \dd (f_{ij}^{q-1} N_{ij} \cdot \textbf{A}_j) \\
& =  f_{ij}^{q-1} N_{ij} \cdot \dd \textbf{A}_j + (q-1) f_{ij}^{q-2} \dd f_{ij} \wedge (N_{i,j} \cdot \textbf{A}_j) \\
& =  f_{ij}^{q-1} \eta_j \wedge N_{ij} \cdot \textbf{A}_j + (q-1)\frac{\dd f_{ij}}{f_{ij}} \wedge \textbf{A}_i\\
& =  \eta_j \wedge \textbf{A}_i + (q-1)\frac{\dd f_{ij}}{f_{ij}} \wedge \textbf{A}_i
\end{align*}
on $U_{ij}$, or, equivalently,
\begin{equation}\label{eq:almost_coboundary}
\left((q-1)\frac{\dd f_{ij}}{f_{ij}} +\eta_j-\eta_i\right)\wedge A_k^i=0
\end{equation}
for any index $k$. Note that
\[
\left((q-1)\frac{\dd f_{ij}}{f_{ij}} +\eta_j-\eta_i\right)\bigg|_{T_\cF|_{U_{ij}}}=
(q-1)\left(\frac{\dd f_{ij}}{f_{ij}} +\beta_j-\beta_i\right)\bigg|_{T_\cF|_{U_{ij}}}=0
\]
by \eqref{eq:beta} and \eqref{eq:eta}. This implies 
\[
(q-1)\frac{\dd f_{ij}}{f_{ij}} = (\eta_i - \eta_j)|_{U_{ij}}.
\]
Note that $q-1 > 0$ by assumption, and let $\nabla_i^\vee$ be the connection on $\sN^\vee|_{U_i}$ given by $\dd_i+\frac{1}{q-1}\eta_i\otimes \textup{id}$ with $d_i$ the flat connection on $\sN^\vee|_{U_i}$ such that $\dd_i \alpha_k^i=0$ for any index $k$. Then, we have
\begin{align*}
\nabla_j^\vee(\textbf{a}_i) & =\nabla_j^\vee(f_{ij} M_{ij} \cdot \textbf{a}_j)\\
& =f_{ij} M_{ij} \otimes \nabla_j^\vee(\textbf{a}_j)+\dd f_{ij} \otimes M_{ij}\cdot \textbf{a}_j \\ 
& = \frac{1}{q-1} \eta_j\otimes f_{ij} M_{ij} \cdot \textbf{a}_j+\frac{\dd f_{ij}}{f_{ij}}\otimes f_{ij} M_{ij}\cdot \textbf{a}_j\\
& =\frac{1}{q-1} \eta_j \otimes \textbf{a}_i+\frac{1}{q-1}(\eta_i-\eta_j)\otimes \textbf{a}_i=\frac{1}{q-1}\eta_i\otimes \textbf{a}_i\\
& =\nabla_i^\vee(\textbf{a}_i),
\end{align*}
and hence the $\nabla_i^\vee$ glue together defining a connection $\nabla^\vee$ on $\sN^\vee$. In addition, we have $\nabla^\vee|_{T_\cF|_{U_i}} = \nabla_i^\vee|_{T_\cF|_{U_i}} = \nablaBott^\vee|_{U_i}$ by \eqref{eq:eta}, and hence $\nabla:=(\nabla^\vee)^\vee$ is a connection on $(\sN,\nablaBott)$. This finishes the proof of the theorem.
\end{proof}

\begin{rem}
If one drops the assumption that the projective connection on $\sN$ is compatible with the Bott connection, then the conclusion of Theorem \ref{thm:vanishing} may be false. Indeed, let $E$ be an elliptic curve, and let $B$ be an abelian variety of dimension $\dim B =n-1 \ge  1$. Set $A:= E \times B$. Let $\sM$ be a line bundle of degree $2$ on $E$, and set $\sL:=\textup{pr}_E^*\sM^\vee$. Let $s_i, t_i \in H^0(A,\sL^\vee)$ for $i \in \{1,\ldots,n-1\}$ such that the zero sets of the $s_i$ and the $t_i$ are pairwise disjoint. Let $\alpha_1,\ldots,\alpha_n$ be a basis of $H^0(A,\Omega_A^1)$.
The sections $s_i$ and $t_i$ then give an injective map of vector bundles $\sL \subset \sO_A \alpha_i \oplus \sO_A \alpha_{i+1}$. In addition, the induced map of vector bundles $\sN^\vee:=\sL^{\oplus n-1} \to \Omega_A^1$ is injective as well. The annihilator $T_\cF \subset T_X$ of $\sN^\vee \subset \Omega_X^1$ defines a regular foliation $\cF$ of dimension $1$ on $A$ whose normal bundle is projectively flat with non-vanishing first Chern class by construction.
\end{rem}

\section{Transversely projectively flat foliations with compact leaves}\label{section:compact_leaves}

The proof of Theorem \ref{thm:compact_leaves} below makes use of the following characterization of torus quotients. Note that Theorem \ref{thm_intro:projectively_flat_spaces} is an immediate consequence of Theorem \ref{thm:projectively_flat_spaces} below.

\begin{thm}\label{thm:projectively_flat_spaces}
Let X be a compact K\"{a}hler variety of dimension $n$ at least $2$ with klt singularities, and let $U \subseteq X_{\textup{reg}}$ be a Zariski open subset with complement of codimension at least $2$. Suppose that $T_U$ is projectively flat. Then there exists a complex torus $T$, and a quasi-\'etale cover $T \to X$.
\end{thm}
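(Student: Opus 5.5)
The plan is to reduce to the case of a flat tangent sheaf and then invoke \cite[Theorem D]{CGGN22}, which asserts that a compact K\"ahler klt variety whose tangent sheaf is flat on the regular locus (equivalently, with vanishing Chern classes) is a quasi-\'etale quotient of a complex torus. Apply the Corollary to Theorem~\ref{thm:vanishing} to $U$, viewing $T_U$ as the normal bundle of the foliation by points. This is legitimate since $n \ge 2$, and it shows that $T_U$ admits a holomorphic connection. In particular the Atiyah class of $T_U$ vanishes, so the (orbifold/$\mathbb{Q}$-)Chern classes of the reflexive sheaf $T_X$ should vanish in the appropriate sense. First, $c_1$: the connection on $T_U$ induces one on $\det T_U=\omega_U^\vee$, so $K_X$ restricted to $U$ has a holomorphic connection. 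Using that $X\setminus U$ has codimension at least $2$ and $X$ is klt, I would argue that $K_X\equiv 0$ numerically. Concretely, pass to an index one cover so that $K_X$ is Cartier and use a Kähler class $\omega$ to compute $K_X\cdot\omega^{n-1}=0$ via the connection form. An alternative is to note that the projective connection gives a representation $\pi_1(U)\to \mathrm{PGL}(n,\mathbb{C})$ whose determinant twist is torsion up to the line bundle $K_U$.

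Next, I would use the projective flatness: the representation $\rho\colon\pi_1(U)\to \mathrm{PGL}(n,\mathbb{C})$ describes $\mathbb{P}(T_U)$. Since $c_1=0$, the standard argument (as in Lemma~\ref{lemma:projectively_flat_versus_flat}) lifts $\rho$, after a finite cover, to $\mathrm{SL}$ up to a line bundle twist that is numerically trivial. This should show that $T_U$ is flat after passing to a quasi-\'etale cover. To make the covers quasi-\'etale rather than merely generically finite, I would use the finiteness of the obstruction class in $H^2(\pi_1(U),\mu_n)$. That class is killed by a finite-index subgroup only if $\pi_1$ behaves well, so instead I would directly use the fact that $K_X$ is torsion: take the index one cover $X'\to X$, which is quasi-\'etale, so that $\det T_{U'}\cong\mathcal{O}$ on $U'$. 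A projectively flat bundle with trivial determinant has its $\mathrm{PGL}$-structure lifting to $\mathrm{SL}(n)$ because the $\mu_n$ gerbe obstruction is identified with the $n$-torsion part of $\det$, which vanishes. Hence $T_{U'}$ is flat.

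Finally, I would apply \cite[Theorem D]{CGGN22} to $X'$ with the big open subset $U'$ (using that flatness on a big open subset of the regular locus suffices, by codimension-$2$ extension for klt spaces, \cite{CGGN22}). This yields a torus $T$ and a quasi-\'etale cover $T\to X'$, and composing with $X'\to X$ gives the desired cover. I expect the main obstacle to be the first step, namely proving $K_X$ torsion (or numerically trivial) on a possibly singular Kähler klt space from the existence of a connection on $\det T_U$ over a big open set. This requires Hodge-theoretic input on $X$ singular. I would try resolving and using that $c_1$ of a line bundle with a holomorphic connection vanishes in $H^1(\Omega^1)$ on $U$, then extending by the codimension-$2$ restriction isomorphism for $H^2$ of klt spaces as used in \cite{CGGN22}, and concluding torsion via abundance for $K\equiv 0$ on Kähler klt spaces.
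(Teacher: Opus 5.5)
Your proposal is correct, but it does not follow the paper's route. The paper never reduces to the flat case. It reruns the proof of \cite[Theorem 5.1]{CGGN22}, with Lemma~\ref{lemma:isolated_singularities} (resting on \cite[Proposition 4.1]{GKP_proj_flat_JEP}) replacing \cite[Proposition 5.5]{CGGN22}, to pass to a maximally quasi-\'etale cover. It then uses $c_1(X)=0$ and \cite[Corollary 4.2]{CGGN22} to get a quasi-\'etale, hence \'etale, cover $T\times Z\to X$. It excludes $\dim T=0$ via \cite[Theorem 7.2]{CGGN22}, and finally shows $\dim Z=0$ using Beauville--Bogomolov and the triviality of $T_{T\times Z}|_{\{t\}\times Z}$.

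You instead do the projective-to-linear reduction up front:
\begin{enumerate}
\item $c_1(X)=0$ makes $K_X$ torsion, e.g.\ by \cite[Corollary 1.18]{cao_guenancia_paun_2023}, which the paper itself invokes in Section~\ref{section:compact_leaves}.
\item The index one cover $X'\to X$ is quasi-\'etale, \'etale over $U$ by purity, and satisfies $\omega_{X'}\cong\sO_{X'}$.
\item A projectively flat bundle with trivial determinant is flat.
\item Lemma~\ref{lemma:flat_extension} (plain Hartogs on $X'_{\textup{reg}}$, no klt input needed) and \cite[Theorem D]{CGGN22} then conclude.
\end{enumerate}
Your argument is much shorter and uses \cite[Theorem D]{CGGN22} as a black box. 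The price is the extra input that a numerically trivial $K_X$ on a compact K\"ahler klt space is torsion. The paper only needs $c_1(X)=0$ to feed \cite[Corollary 4.2]{CGGN22}, and along the way it obtains structural information such as Lemma~\ref{lemma:isolated_singularities}.

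Two points to tidy.

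\emph{The lifting step.} Your justification is garbled. The obstruction in $H^2(U',\mu_n)$ is the Kummer image of $[\det T_{U'}]$, i.e.\ it measures whether the determinant is an $n$-th power in $\textup{Pic}(U')$, not its ``$n$-torsion part''. It does vanish here, but a cleaner proof avoids gerbes altogether. Take frames $\textbf{e}_i=f_{ij}M_{ij}\cdot\textbf{e}_j$ as in Lemma~\ref{lemma:proj_flat_frames} on simply connected $U_i$. Rescale each $\textbf{e}_i$ so that $e^i_1\wedge\cdots\wedge e^i_n$ is a fixed generator of $\det T_{U'}$. Then $f_{ij}^n\det M_{ij}=1$, so the $f_{ij}$ are locally constant and $T_{U'}$ is flat.

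\emph{The step $c_1(X)=0$.} The paper simply asserts this from Theorem~\ref{thm:vanishing}, but your first suggestion for it does not work. A global index one cover presupposes that $K_X$ is torsion, and $K_X\cdot\omega^{n-1}=0$ for a single K\"ahler class does not give $K_X\equiv0$. Your last suggestion is the right one:
\begin{enumerate}
\item The curvature of the induced connection on $\det T_U$ is a closed holomorphic $2$-form on $U$.
\item By Hartogs and the extension theorem for klt singularities, it extends to a resolution $\widetilde X\to X$.
\item The pulled-back class of $mK_X$ therefore differs from a class of type $(2,0)$ by a combination of exceptional divisors.
\item Comparing Hodge types, then applying the negativity lemma and injectivity of pull-back on $H^2$, forces $c_1(X)=0$.
\end{enumerate}
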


\begin{proof}
The arguments used in the proof of \cite[Theorem 5.1]{CGGN22} show that $X$ admits a quasi-\'etale cover $\eta \colon Y \to X$ satisfying the following properties. There exists an open subset $X^\circ \subseteq X$ whose complement is a union of finitely many points such that $Y^\circ:=\eta^{-1}(X^\circ) \to X^\circ$ is a maximally quasi-\'etale cover with $Y^\circ$ smooth (we refer the reader to \cite[Definition 5.3]{CGGN22} and the references therein for this notion). One only needs to replace the use of \cite[Proposition 5.5]{CGGN22} by Lemma \ref{lemma:isolated_singularities} below. Then \cite[Proposition 5.9]{CGGN22} applies to show that $Y$ admits a maximally quasi-\'etale cover. Therefore, replacing $X$ by a quasi-\'etale cover, if necessary, we may assume without loss of generality that $X$ is maximally quasi-\'etale.

Note that $X$ has only isolated quotient singularities by Lemma \ref{lemma:isolated_singularities} again. Moreover, $c_1(X)=0 \in H^2(X,\mathbb{C})$ by Theorem \ref{thm:vanishing}. By \cite[Corollary 4.2]{CGGN22}, there exists a complex torus $T$ as well as a compact K\"ahler variety $Z$ with canonical singularities and a quasi-\'etale cover $\gamma \colon T \times Z \to X$. In addition, we have $\omega_Z\cong \sO_Z$ and $\tilde{q}(Z)=0$. Note also that $\gamma$ is \'etale since $X$ is maximally quasi-\'etale by our current assumption, and hence $T \times Y$ has only isolated singularities. 

We now prove $\dim T \ge 1$. We argue by contradiction and assume $\dim T =0$. By Lemma \ref{lemma:flat_extension}, $T_{X_{\textup{reg}}}$ is projectively flat. Let 
\[
\rho \colon \pi_1\big(X_{\textup{reg}}\big) \to \textup{PGL}(n,\mathbb{C})
\]
be a representation that defines the projectively flat structure on $T_{X_{\textup{reg}}}$. Then $\rho$
factors through $\pi_1\big(X_{\textup{reg}}\big) \to \pi_1(X)$ (see \cite[Remark 5.4]{CGGN22}).
Moreover, the induced representation
\[
\pi_1(Z) \to \pi_1(X) \to \textup{PGL}(n,\mathbb{C}) 
\]
has finite image by \cite[Theorem 7.2]{CGGN22}. Replacing, $Z$ by a finite \'etale cover, if necessary, we may therefore assume that $\Omega_Z^{[1]} \cong \sL^{\oplus \dim Z}$, where $\sL$ is reflexive of rank $1$. Then 
$\sL^{[\otimes \dim Z]}\cong \sO_Z$ since $\omega_Z\cong \sO_Z$. Hence, replacing $Z$ by a further quasi-\'etale cover, we may assume that $\sL \cong \sO_Z$. This gives a contradiction since $\tilde{q}(Z)=0$ by assumption. 

Note that $Z$ is smooth since $T \times Z$ has only isolated singularities and $\dim T \ge 1$. By the Beauville - Bogomolov decomposition theorem, we may assume that $Z$ is simply connected. Let $t \in T$ and set $Z_t = \{t\}\times Z \subset T\times Z$. The tangent bundle $T_{T \times Z}$ is projectively flat with zero first Chern class by assumption. Therefore, its restriction to $Z_t$ is the trivial vector bundle. This in turn implies that $\Omega^1_{Z_t}$ is generated by its global sections, and hence $\dim Z_t =0$ since $q(Z_t)=0$.
This finishes the proof of the theorem.
\end{proof}

\begin{lemma}\label{lemma:isolated_singularities}
Let $X$ be a complex space with klt singularities. Suppose that the vector bundle $T_{X_{\textup{reg}}}$ is projectively flat. Suppose in addition that $X$ admits a maximally quasi-\'etale cover $\eta \colon Y \to X$. Then $Y$ has only isolated, cyclic quotient singularities.
\end{lemma}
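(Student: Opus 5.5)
The plan is to transfer the projective flatness to $Y$, then analyse the local fundamental groups at singular points of $Y$ using that the cover is maximally quasi-\'etale.

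\emph{Transfer to $Y$.} Since $\eta$ is quasi-\'etale, it is \'etale over $X_{\textup{reg}}$ away from a codimension $\ge 2$ subset. Hence $T_{Y_{\textup{reg}}}$ is projectively flat on a big open subset $Y^\circ \subseteq Y_{\textup{reg}}$, given by a representation $\pi_1(Y^\circ)\to \textup{PGL}(n,\mathbb{C})$. Because removing a codimension $\ge 2$ analytic subset of a manifold does not change $\pi_1$, this representation extends to all of $Y_{\textup{reg}}$. The same argument gives projective flatness of $T_{U_y \cap Y_{\textup{reg}}}$ for a small contractible Stein neighbourhood $U_y$ of any point $y \in Y$. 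So $\mathbb{P}(T_{Y_{\textup{reg}}})$ restricted to $U_y^\circ := U_y\cap Y_{\textup{reg}}$ comes from a representation $\rho_y\colon \pi_1(U_y^\circ) \to \textup{PGL}(n,\mathbb{C})$.

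\emph{Local picture.} Since $Y$ is maximally quasi-\'etale, the map $\widehat{\pi}_1(U_y^\circ)\to \widehat{\pi}_1(U_y)$ is an isomorphism on profinite completions. For klt singularities the local fundamental group $\pi_1(U_y^\circ)$ is finite, by Braun's theorem, and $U_y$ is contractible. So $\pi_1(U_y^\circ)$ is trivial, i.e. $Y$ has no nontrivial local quasi-\'etale covers near $y$. The projectively flat structure on $U_y^\circ$ is therefore trivial: $\mathbb{P}(T_{U_y^\circ})$ is a trivial flat $\mathbb{P}^{n-1}$-bundle. Lifting to $\textup{GL}$ (the obstruction lies in $H^2$ of a simply connected punctured neighbourhood, handled as in Lemma \ref{lemma:proj_flat_frames}), we get $T_{U_y^\circ}\cong \sL\otimes \sO^{\oplus n}$ for some line bundle $\sL$ on $U_y^\circ$. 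By Theorem \ref{thm:vanishing}, applied to the foliation by points with $q=n\ge 2$, $T_{U_y^\circ}$ in fact carries a holomorphic connection. This gives local data of the form $\Omega^{[1]}_{U_y}\cong (\sL^{\vee})^{[\oplus n]}$ with $\sL$ reflexive of rank one, and $\sL$ is torsion in the local class group since klt singularities are $\mathbb{Q}$-factorial up to torsion locally. Taking the index-one cover associated to $\sL$ is quasi-\'etale. By maximality it is trivial, so $\sL$ is invertible and $T_{U_y}$ extends as a locally free sheaf $\sO^{\oplus n}$ near $y$. The Lipman--Zariski conjecture for klt spaces (Greb--Kebekus--Kov\'acs--Peternell, Druel) then forces $Y$ to be smooth at $y$.

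This reasoning seems to show that $Y$ is smooth, which is stronger than claimed. The discrepancy must come from the fact that maximal quasi-\'etaleness only controls \emph{finite} quotients of $\pi_1$, and the isomorphism with $\pi_1(U_y)$ only holds in the profinite sense for finitely many points. So I expect the actual argument to be as follows. Away from finitely many points the above gives smoothness. At the remaining points, the local representation $\rho_y$ of the finite group $\pi_1(U_y^\circ)$ linearises the singularity: the universal cover of $U_y^\circ$ extends to a smooth germ $(\mathbb{C}^n,0)$, on which $\pi_1(U_y^\circ)$ acts freely in codimension one via a lift of $\rho_y$. Projective flatness, with trivial lift on $\mathbb{C}^n$, makes the action of the group $G$ on $T_{\mathbb{C}^n,0}$ scalar, i.e. $G\subseteq \mathbb{C}^*\cdot\textup{id}$. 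So $G$ is cyclic acting by scalars, and the singularity is an isolated cyclic quotient singularity $\mathbb{C}^n/\mu_m$.

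The main obstacle is the step showing that the linearised action is by scalars. This requires identifying the local projective flat structure on $T_{\mathbb{C}^n\setminus\{0\}}$ (trivial, since the punctured germ is simply connected for $n\ge 2$) with the $G$-equivariant one, and checking that $g\in G$ acts on $\mathbb{P}(T_0)$ trivially. I would first try this by comparing $\rho_y(g)$ with the differential $dg_0$ via the flat frames given by Lemma \ref{lemma:proj_flat_frames}.
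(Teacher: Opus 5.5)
Your transfer of the projectively flat structure to $Y_{\textup{reg}}$ is fine, but there is a genuine gap: you never use the maximally quasi-\'etale hypothesis correctly, and the conclusion depends on it.

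Maximal quasi-\'etaleness is a \emph{global} condition, $\widehat{\pi}_1(Y_{\textup{reg}})\cong\widehat{\pi}_1(Y)$. It does not give $\widehat{\pi}_1(U_y^\circ)\cong\widehat{\pi}_1(U_y)$ for a small neighbourhood $U_y$. It also says nothing about purely local quasi-\'etale covers, such as the index-one cover of $\mathscr{L}$ over $U_y$. An isolated cyclic quotient point, which the statement explicitly allows, has local fundamental group $\mu_m\neq 1$. So both appeals to ``maximality'' in your local step are false, and that is why you seem to prove smoothness.

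The proposed repair does not fix this. The claim that the first argument works away from finitely many points is unsupported, and isolatedness is part of what must be proved. The smoothness of the local universal cover is asserted, not shown. In the scalar-action step, comparing $dg_0$ with $\rho_y(g)$ through flat frames only gives $[dg_0]=\rho_y(g)$ in $\textup{PGL}(n,\mathbb{C})$, and nothing local makes $\rho_y$ trivial. For example, take $\mathbb{C}^3/\mu_2$ with $\mu_2$ acting by $\textup{diag}(-1,-1,1)$. The quotient is klt and the trivial connection descends, so the tangent bundle of the smooth locus is flat, yet the singular locus is a curve. Likewise, $\mathbb{C}^2/G$ with $G\subset\textup{SL}(2,\mathbb{C})$ binary icosahedral is not cyclic.

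The missing idea is how the global hypothesis kills the local \emph{monodromy}, not the local fundamental group. Consider the representation $\pi_1(Y_{\textup{reg}})\cong\pi_1(\eta^{-1}(X_{\textup{reg}}))\to\pi_1(X_{\textup{reg}})\to\textup{PGL}(n,\mathbb{C})$. Its image is linear, and finitely generated linear groups are residually finite, so maximal quasi-\'etaleness forces it to factor through $\pi_1(Y)$ (cf. \cite[Remark 5.4]{CGGN22}). Restricting to a contractible neighbourhood $U$ of $y$, the local representation of $\pi_1(U\cap Y_{\textup{reg}})$ then factors through $\pi_1(U)=1$. Hence $\Omega^{[1]}_U\cong\mathscr{L}^{\oplus n}$ with $\mathscr{L}$ reflexive of rank one, and the paper concludes by \cite[Proposition 4.1]{GKP_proj_flat_JEP}.

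That result uses your own tools, but in the right direction:
\begin{enumerate}
\item $\mathscr{L}$ is $\mathbb{Q}$-Cartier because $\mathscr{L}^{[n]}\cong\omega_U$, not because klt germs are $\mathbb{Q}$-factorial (they need not be).
\item Its index-one cover $\tilde U\to U$ is quasi-\'etale with $\Omega^{[1]}_{\tilde U}$ locally free, so $\tilde U$ is smooth by Lipman--Zariski.
\item The cyclic Galois group acts on $\Omega^{1}_{\tilde U}\cong\mathscr{M}^{\oplus n}$, where $\mathscr{M}$ is the reflexive pull-back of $\mathscr{L}$, through a character. So it acts by scalars at the fixed point.
\end{enumerate}
The index-one cover is therefore not trivialized by maximality; it is the uniformizing chart. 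Once $\rho_y$ is known to be trivial, your local-universal-cover route would also work, provided you prove that cover is smooth by the same Lipman--Zariski argument on the simply connected cover.
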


\begin{proof}
Let $n$ be the dimension of $X$, and let 
\[
\pi_1\big(X_{\textup{reg}}\big) \to \textup{PGL}(n,\mathbb{C})
\]
be a representation that defines the projectively flat structure on $T_{X_{\textup{reg}}}$. 
By the Nagata-Zariski purity theorem, we have $\eta^{-1}\big(X_{\textup{reg}}\big) \subseteq Y_{\textup{reg}}$.
The inclusion now induces an isomorphism $\pi_1\big(\eta^{-1}\big(X_{\textup{reg}}\big)\big)\cong 
\pi_1\big(Y_{\textup{reg}}\big)$ of fundamental groups since $Y_{\textup{reg}}\setminus \eta^{-1}\big(X_{\textup{reg}}\big)$ is a closed subset of codimension at least $2$. The composition
\[
\pi_1\big(Y_{\textup{reg}}\big) \cong \pi_1\big(\eta^{-1}\big(X_{\textup{reg}}\big)\big) \to \pi_1\big(X_{\textup{reg}}\big) \to \textup{PGL}(n,\mathbb{C})
\]
then factors through $\pi_1\big(Y_{\textup{reg}}\big) \to \pi_1(Y)$ since $Y$ is maximally quasi-\'etale by assumption. Let $y \in Y$ be any point. Then there exists an open neighborhood $U$ of $y$ over which $\Omega_U^{[1]} \cong \sL^{\oplus n}$, where $\sL$ is reflexive of rank $1$. The claim now follows from \cite[Proposition 4.1]{GKP_proj_flat_JEP}.
\end{proof}

\begin{lemma}\label{lemma:flat_extension}
Let $X$ be a complex manifold, and let $\sE$ be a vector bundle on $X$. Let $X^\circ \subseteq X$ be a Zariski open subset with complement of codimension at least $2$. If $\sE|_{X^\circ}$ is flat $($resp. projectively flat$)$, then $\sE$ is flat $($resp. projectively flat$)$ as well.
\end{lemma}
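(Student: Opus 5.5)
The plan is to reduce the question to a statement about local systems on $X^\circ$, using the correspondence between flat (resp. projectively flat) bundles and representations of the fundamental group recalled in Section \ref{section:notation}. The key input is the purity of the fundamental group. Since $X$ is a complex manifold and $Z:=X\setminus X^\circ$ is an analytic subset of codimension at least $2$, the inclusion induces an isomorphism $\pi_1(X^\circ)\cong\pi_1(X)$ (for $X$ connected; otherwise argue componentwise). This holds because a real codimension $\ge 4$ subset can be avoided by loops and by homotopies between loops.

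\emph{The flat case.} A flat connection on $\sE|_{X^\circ}$ gives a local system of rank $r$ on $X^\circ$, hence a representation $\rho\colon\pi_1(X^\circ)\to \textup{GL}(r,\mathbb{C})$. Via the isomorphism above, $\rho$ extends to $\pi_1(X)$ and thus defines a flat vector bundle $(\sE',\nabla')$ on $X$ together with an isomorphism $\phi\colon\sE'|_{X^\circ}\cong\sE|_{X^\circ}$ compatible with the connections. Both $\sE$ and $\sE'$ are locally free, hence reflexive, so $\phi$ and $\phi^{-1}$ extend across $Z$ by Hartogs' theorem ($\textup{codim}\, Z\ge 2$). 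This yields an isomorphism $\sE'\cong\sE$ on $X$. Transporting $\nabla'$ gives a connection on $\sE$; its curvature vanishes on the dense open set $X^\circ$ and hence vanishes everywhere. An alternative, more local route is to write the connection matrix in a local frame of $\sE$ near $z\in Z$: it is a holomorphic matrix of $1$-forms on $U\setminus Z$, so it extends by Hartogs, and flatness and the Leibniz rule persist by continuity.

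\emph{The projectively flat case.} The same local Hartogs argument works well with the description of a projective connection as a splitting of the pushed-out Atiyah sequence. The splitting $\square\colon T_{X^\circ}\to \sD^1(\sE)/\sO\,\textup{id}$ is a section over $X^\circ$ of the vector bundle $\sH om(T_X,\sD^1(\sE)/\sO_X\textup{id})$. It therefore extends uniquely to $X$, because sections of locally free sheaves extend across codimension $\ge 2$. The identity $\overline{\sigma}\circ\square=\textup{id}$ holds on $X^\circ$ and hence on $X$. For flatness, the obstruction to being a Lie algebra morphism is the $\sO_X$-linear map $\wedge^2T_X\to \sE nd(\sE)/\sO_X\textup{id}$, $(v,w)\mapsto[\square v,\square w]-\square[v,w]$. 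This is a section of a vector bundle that vanishes on $X^\circ$, so it vanishes identically.

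The same local argument also gives the flat case directly, applied to the Atiyah sequence itself. So I would present the local extension argument as the main proof and use the $\pi_1$ purity only as a remark. The only point needing care is that $\sD^1(\sE)/\sO_X\textup{id}$ is locally free; this holds because it is an extension of $T_X$ by $\sE nd(\sE)/\sO_X\textup{id}$, both locally free. I expect no genuine obstacle.
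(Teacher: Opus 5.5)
Your proposal is correct. In the flat case your local argument is the paper's: the flat connection on $\sE|_{X^\circ}$ extends across the codimension-$2$ complement by Hartogs, and its curvature, vanishing on a dense open set, vanishes everywhere.

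The projectively flat case follows a genuinely different route. The paper never extends the projective connection itself. It applies Proposition~\ref{prop:projectively_flat_versus_flat}, with the zero foliation, in three steps:
\begin{enumerate}
\item projective flatness of $\sE|_{X^\circ}$ gives flatness of $(\sE\otimes\sE^\vee)|_{X^\circ}$;
\item the flat case then gives flatness of $\sE\otimes\sE^\vee$ on $X$;
\item the converse direction of that proposition, the Biswas-type splitting argument, returns a flat projective connection on $\sE$.
\end{enumerate}
You instead extend the splitting $\square$ directly, as a section of the vector bundle $\Omega^1_X\otimes\big(\sD^1(\sE)/\sO_X\textup{id}\big)$. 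You then kill the projective curvature by the identity principle.

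Your route is more elementary and self-contained. It needs only two facts. First, $\sD^1(\sE)/\sO_X\textup{id}$ is locally free. Second, the projective curvature is an $\sO_X$-bilinear tensor with values in $\sE\hspace{-0.07cm}\textit{nd}_{\,\sO_X}(\sE)/\sO_X\textup{id}$. You should write out the second check explicitly:
\[
[f\square(v),\square(w)]=f[\square(v),\square(w)]-w(f)\square(v),
\]
which matches
\[
\square([fv,w])=f\square([v,w])-w(f)\square(v).
\]
Your argument avoids the nontrivial converse in Proposition~\ref{prop:projectively_flat_versus_flat}, whose proof the paper only sketches. The paper's route is a one-liner given that proposition, and it reduces everything to the single flat case. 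The $\pi_1$-purity argument is not needed, so relegating it to a remark is the right choice.
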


\begin{proof}
Suppose that $\sE|_{X^\circ}$ is flat, and let $\nabla^\circ$ be a flat connection on $\sE|_{X^\circ}$. By the Hartogs extension theorem, $\nabla^\circ$ extends to a flat connection $\nabla$ on $\sE$. If $\sE|_{X^\circ}$ is projectively flat, then $\sE$ is projectively flat as well by Proposition \ref{prop:projectively_flat_versus_flat} together with the previous case.
\end{proof}

Next, we address transversely projectively flat foliations with compact leaves. Note that Theorem \ref{thm_intro:compact_leaves} is an immediate consequence of Theorem \ref{thm:compact_leaves} below together with Theorem \ref{thm:vanishing}.

\begin{thm}\label{thm:compact_leaves}
Let $X$ be a compact K\"ahler manifold, and let $\cF$ be regular foliation on $X$ with normal bundle $\sN$. Suppose that $\cF$ is transversely projectively flat with compact leaves. Suppose furthermore that $c_1(\sN)=0$. Then there exist a complex torus $T$ and a smooth morphism $f \colon Y \to T$ as well as a finite \'etale cover $\gamma\colon Y \to X$ such that $\gamma^{-1}\cF$ is induced by $f$. 
\end{thm}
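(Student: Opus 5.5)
Since $X$ is compact Kähler and all leaves of $\cF$ are compact, I would start from the Holmann/Epstein--Millett structure theory for compact-leaf foliations on compact Kähler manifolds. It gives a compact Kähler orbifold (a normal complex space with quotient singularities) $Y_0$, the leaf space, and a proper map $\pi\colon X \to Y_0$ whose fibres are the leaves. The holonomy groups are finite, and only finitely many of them are nontrivial generically. Concretely, near each leaf $L$, $\cF$ is the quotient of a product $\wt L \times B$ by the finite holonomy group $G_L$ acting on $\wt L$ (a finite étale cover of $L$) and linearly on a ball $B \subset \mathbb{C}^q$.

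The first step is to transport the transverse projective structure to $Y_0$. On the orbifold chart $B$, the pullback of $\cF$ to $\wt L\times B$ is given by the projection to $B$, and its normal bundle with the Bott connection is the pullback of $T_B$ with its canonical flat partial connection, by Lemma \ref{lemma:functoriality_Bott_connection}. Lemma \ref{lemma:descent} then shows that a flat projective connection on $(\sN,\nablaBott)$ descends to a $G_L$-invariant flat projective connection on $T_B$. The descended connections glue, because the descent is unique. Hence $T_{Y_0^{\textup{reg}}}$ is projectively flat. Moreover $Y_0$ has quotient singularities, so it is klt. The condition $c_1(\sN)=0$ is not strictly needed here, since Theorem \ref{thm:projectively_flat_spaces} supplies $c_1=0$ through Theorem \ref{thm:vanishing}. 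If $q=\dim Y_0 \ge 2$, I would apply Theorem \ref{thm:projectively_flat_spaces} directly.

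To do that I need the singular locus of $Y_0$ to have codimension $\ge 2$, or else I pass to the orbifold with its branch divisor. This is the main obstacle. Codimension-one orbifold points correspond to reflections in the holonomy, and there the coarse space is smooth but $T_{Y_0}$ differs from the orbifold tangent sheaf. My plan is to work with the orbifold tangent bundle, which is what actually carries the descended projective structure. I would use the projectively flat representation of the orbifold fundamental group together with the argument of Theorem \ref{thm:projectively_flat_spaces} to produce a finite orbifold cover $T \to Y_0$ by a complex torus that is étale in the orbifold sense. The first thing I would try is to show that the codimension-one orbifold divisor must be empty. The reason is that a projectively flat orbifold tangent bundle with $c_1=0$ forces the orbifold canonical class $K_{Y_0}+\Delta$ to be numerically trivial, while the abundance-type conclusion of \cite{CGGN22} yields a torus quotient only in the pair setting. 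The cases $q=1$ (orbifold curves with a projective structure and $c_1=0$, which are elliptic with no orbifold points) and $q=0$ are handled by hand.

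Finally, I would form the normalized fibre product $Y := X \times_{Y_0} T$. Because $T\to Y_0$ is orbifold-étale, it exactly unwinds the finite holonomy groups, so $\gamma\colon Y\to X$ is finite étale. The induced map $f\colon Y\to T$ is then smooth with fibres the étale covers $\wt L$ of the leaves, and $\gamma^{-1}\cF$ is the foliation induced by $f$. Checking that $\gamma$ is étale reduces to the local models $\wt L\times B$, using $\wt L\times B\to (\wt L\times B)/G_L$ and the fact that $T\to Y_0$ is locally $B\to B/G_L$. Connectedness of $Y$ can be arranged by taking a component.
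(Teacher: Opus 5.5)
There is a genuine gap, and it lies exactly at the point you call the main obstacle: the codimension-one part of the orbifold structure on the leaf space, i.e.\ multiple fibres of $X\to Y_0$ coming from reflections in the holonomy.

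Your claim that $T_{Y_0^{\textup{reg}}}$ is projectively flat is false in general. Pushing the structure down through $z\mapsto z^{m}$ creates logarithmic poles along the branch divisor. Moreover, the codimension-one orbifold divisor need not be empty, so the first thing you propose to try cannot succeed. For instance, let $E,E_1,E_2$ be elliptic curves, let $\tau\in E$ be a point of order $2$, and let $\mathbb{Z}/2$ act freely on $E\times E_1\times E_2$ by $(x,a,b)\mapsto(x+\tau,-a,b)$. The foliation by the curves $E\times\{(a,b)\}$ descends to a transversely flat foliation on the quotient, of codimension $2$, with compact leaves and $c_1(\sN)=0$. Its leaf space, however, is $\mathbb{P}^1\times E_2$ with orbifold divisor $\frac12\sum_{k=1}^{4}\{p_k\}\times E_2$, where the $p_k$ are the branch points of $E_1\to\mathbb{P}^1$. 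And $T_{\mathbb{P}^1\times E_2}\cong\textup{pr}_1^{*}\sO_{\mathbb{P}^1}(2)\oplus\sO$ is not projectively flat. A bielliptic surface likewise shows that for $q=1$ the leaf space can be $\mathbb{P}^1$ with four points of order $2$, contrary to what you assert.

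Your fallback, an orbifold version of the argument of Theorem~\ref{thm:projectively_flat_spaces}, is also not available. That theorem and the results of \cite{CGGN22} behind it concern klt varieties and quasi-\'etale covers. A quasi-\'etale cover of the coarse space is \'etale over its smooth locus, in particular over the general point of each branch divisor. So it can never unwind the multiple fibres, and your final map $Y\to T$ would not be smooth.

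The missing idea is the use of the hypothesis $c_1(\sN)=0$, which you set aside. For $q\ge2$ it is indeed automatic by Theorem~\ref{thm:vanishing} on $X$. But it must be used to control the orbifold pair; it cannot be recovered from Theorem~\ref{thm:projectively_flat_spaces} on $Y_0$, whose hypothesis fails.

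Here is how the paper uses it. Write $f^*B_i=m_iD_i$ for the multiple fibres in codimension one and set $\Delta=\sum_i\frac{m_i-1}{m_i}B_i$. Then $\det\sN^\vee\cong\sO_X\big(f^*K_Y+\sum_i(m_i-1)D_i\big)$, so $c_1(\sN)=0$ gives $K_Y+\Delta\equiv0$. By \cite[Corollary 1.18]{cao_guenancia_paun_2023} this class is torsion. The index-one cover $Y_1\to Y$ of the klt pair $(Y,\Delta)$ is branched to order exactly $m_i$ along $B_i$. Hence the normalized fibre product $X_1\to X$ is \'etale in codimension one, hence \'etale by purity, and $X_1\to Y_1$ is smooth off a subset of codimension at least $2$. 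Only after this step do your remaining arguments apply: Lemma~\ref{lemma:descent} on $Y_1^\circ$, Theorem~\ref{thm:projectively_flat_spaces} with $U=Y_1^\circ$, and a final fibre product that is \'etale by purity.

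For $q\ge2$ you could alternatively rescue your fundamental-group idea. Along a loop in a leaf, the projective monodromy is the class of the linear holonomy, which is nontrivial for a reflection. By Selberg's lemma, pass to the finite \'etale cover of $X$ attached to the preimage of a torsion-free finite-index subgroup of $\rho(\pi_1(X))$. On that cover only scalar holonomy remains, hence only isolated multiple fibres. Nothing of this kind appears in your proposal, however.
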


\begin{proof}
The arguments used in the proof of \cite[Corollary 2.11]{hoering_split} show that $\cF$ is induced by an equidimensional morphism $f \colon X \to Y$ onto a compact analytic space with only quotient singularities. In particular, the complex variety $Y$ is $\mathbb{Q}$-factorial with klt singularities. Note also that any fiber of $f$ with reduced analytic structure is smooth since $\cF$ is regular by assumption.

Let $(B_i)_{i \in I}$ be the possibly empty set of prime divisors $B$ in $Y$ such that $f^*B$ is not integral. Then $f^*B_i=m_i D_i$ for some integer $m_i \ge 2$ and some prime divisor $D_i$. By the Reeb local stability theorem, the pair $\big(Y,\sum_{i\in I}\frac{m_i-1}{m_i}B_i\big)$ is klt. Moreover, a simple computation shows that $\det \sN^\vee \cong \sO_Y(f^*K_Y+\sum_{i\in I}(m_i-1) D_i)$, so that $K_Y+\sum_{i\in I}\frac{m_i-1}{m_i}B_i \equiv 0$. Then \cite[Corollary 1.18]{cao_guenancia_paun_2023} applies to show that $K_Y+\sum_{i\in I}\frac{m_i-1}{m_i}B_i$ is torsion. Let $\eta_1 \colon Y_1 \to Y$ be the index one cover of $\big(Y,\sum_{i\in I}\frac{m_i-1}{m_i}B_i\big)$
(see \cite[Section 2.4]{shokurov_log_flips}). By construction, we have
\[
K_{Y_1} \sim_\mathbb{Q} \eta_1^*\left(K_Y+\sum_{i\in I}\frac{m_i-1}{m_i}B_i\right) \sim_\mathbb{Q} 0.
\]
Let also $X_1$ be the normalization of the product $Y_1 \times_Y X$. A simple computation then shows that the natural morphism $\gamma_1 \colon X_1 \to X$ is \'etale in codimension $1$, and hence \'etale by purity of the branch locus.

Set $\cF_1=\gamma_1^{-1}\cF$. Then $\cF_1$ is a regular foliation with normal bundle $\sN_1 \cong \gamma_1^*\sN$. Moreover, $\cF_1$ is transversely projectively flat (see Example \ref{example:submersion}), and $c_1(\sN_1)=0$. By the Reeb local stability theorem again, the morphism $f_1\colon X_1 \to Y_1$ is smooth over an open subset $Y_1^\circ \subseteq Y_1$ with complement of codimension at least $2$. Hence, there is an identification $\sN_1|_{X_1^\circ} \cong (f_1|_{X_1^\circ})^* T_{Y_1^\circ}$, where $X_1^\circ:=f_1^{-1}(Y_1^\circ)$. This in turn implies that $T_{Y_1^\circ}$ is projectively flat by Lemma \ref{lemma:descent}. By Theorem \ref{thm:projectively_flat_spaces}, replacing $Y_1$ by a quasi-\'etale cover, we may assume that $Y_1$ is a complex torus. Then we have $\sN_1 \cong f_1^*T_{Y_1}$ since both are reflexives sheaves and agree away from a codimension $2$ subset. This implies that $f_1$ is a submersion since $\cF_1$ is a regular foliation, completing the proof of the proposition. 
\end{proof}

\section{Birational geometry of transversely projectively flat foliations}\label{section:birational_geometry}

In this section we study the birational geometry of transversely projectively flat foliations on complex projective manifolds. Let $\cF$ be a regular foliation on a complex projective manifold $X$ with normal bundle $\sN$. If $c_1(\sN)=0$, then $K_\cF\equiv K_X$ by the adjunction formula. By \cite{bchm}, we can run a $K_\cF$-MMP. 

\medskip

The following result extends \cite[Proposition 2.1 (1)]{jahnke_radloff_13} to leaf spaces of transversely projectively flat foliations. 

\begin{lemma}\label{lemma:simply_connected_subvarieties_tangent}
Let $\cF$ be regular foliation on a complex manifold $X$. Suppose that the normal bundle $\sN$ of $\cF$ is projectively flat with $c_1(\sN)=0$. Let $f \colon Y \to X$ be a holomorphic map from a simply connected compact K\"{a}hler manifold $Y$ to $X$. Then $f(Y)$ is tangent to $\cF$.
\end{lemma}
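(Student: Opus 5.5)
The plan is to pull everything back to $Y$ and use the simple connectedness of $Y$ to trivialize $f^*\sN$. Since $\sN$ is projectively flat, so is $f^*\sN$ (pull-back of a flat projective connection, as recalled after Lemma \ref{lemma:proj_flat_frames}). Let $q$ be the rank of $\sN$. The projective structure on $f^*\sN$ comes from a representation $\pi_1(Y)\to \textup{PGL}(q,\mathbb{C})$, which is trivial because $\pi_1(Y)=\{1\}$. Hence $\mathbb{P}(f^*\sN)\cong Y\times\mathbb{P}^{q-1}$, and so $f^*\sN\cong \sL^{\oplus q}$ for some line bundle $\sL$ on $Y$. Alternatively, one can argue with Lemma \ref{lemma:proj_flat_frames}: the locally constant $M_{ij}$ define a $\textup{PGL}$-valued cocycle which becomes a coboundary on a simply connected space.

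Next I would show that $\sL$ is trivial. We have $c_1(f^*\sN)=q\,c_1(\sL)$, and $c_1(f^*\sN)=f^*c_1(\sN)=0$, so $c_1(\sL)=0$ in $H^2(Y,\mathbb{Q})$, hence in $H^2(Y,\mathbb{Z})$ modulo torsion. Since $Y$ is simply connected, $H^1(Y,\mathbb{Z})=0$ and $H_1(Y,\mathbb{Z})=0$, so $H^2(Y,\mathbb{Z})$ is torsion-free and $\textup{Pic}^0(Y)=0$ (indeed $H^1(Y,\sO_Y)=0$ by Hodge symmetry on the compact Kähler manifold $Y$). Therefore $\sL\cong\sO_Y$, and $f^*\sN\cong\sO_Y^{\oplus q}$ is trivial. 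Some care is needed about which cohomology $c_1(\sN)=0$ refers to. If it only holds in $H^1(X,\Omega^1_X)$, then its pull-back to $Y$ vanishes in $H^{1,1}(Y)$, and on a compact Kähler manifold the map from rational classes into $H^{1,1}$ is injective, so the argument still goes through.

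Now consider the composite morphism
\[
T_Y \xrightarrow{\dd f} f^*T_X \to f^*\sN\cong \sO_Y^{\oplus q}.
\]
Dualizing gives $q$ global holomorphic $1$-forms on $Y$, namely the pull-backs under $f$ of a trivializing frame of $f^*\sN^\vee$. Since $Y$ is compact Kähler and simply connected, $H^0(Y,\Omega^1_Y)=0$, because $h^{1,0}=\tfrac12 b_1=0$. The composite map is therefore zero, which means that $\dd f(T_Y)\subseteq f^*T_\cF$. In other words, $f(Y)$ is tangent to $\cF$.

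The main obstacle is the triviality of $f^*\sN$, namely passing from projective triviality to honest triviality via the vanishing of $c_1$. This rests on the torsion-freeness of $H^2(Y,\mathbb{Z})$ and the vanishing of $\textup{Pic}^0(Y)$ for simply connected compact Kähler $Y$, together with the cohomological interpretation of $c_1(\sN)=0$ discussed above. Once $f^*\sN$ is trivial, the conclusion is immediate from $h^{1,0}(Y)=0$.
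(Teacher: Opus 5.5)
Your proposal is correct and follows the paper's proof. The paper asserts $f^*\sN\cong\sO_Y^{\oplus q}$ ``by our assumptions'' and then kills the composite $T_Y\to f^*\sN$ using $h^0(Y,\Omega^1_Y)=0$; your first two steps (trivial $\textup{PGL}$-monodromy giving $f^*\sN\cong\sL^{\oplus q}$, then $\sL\cong\sO_Y$ from $c_1=0$, torsion-freeness of $H^2(Y,\mathbb{Z})$ and $H^1(Y,\sO_Y)=0$) supply exactly the justification the paper omits, with one wording fix: the map from $H^2(Y,\mathbb{Q})$ to $H^1(Y,\Omega^1_Y)$ is injective on classes of type $(1,1)$ such as $c_1(\sL)$, not on all rational classes (it fails on a K3 surface with Picard number $20$), and that is all you use.
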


\begin{proof}
Let $q$ be the codimension of $\cF$. Note that $\sN|_Y \cong \sO_Y^{\oplus q}$ by our assumptions. The composition 
\[
T_Y \to T_X|_Y\to \sN|_Y\cong \sO_Y^{\oplus q}
\]
then vanishes identically since $h^0(Y,\Omega_Y^1) = h^1(X,\sO_X) = 0$ by Hodge theory. This proves that $f(Y)$ is tangent to $\cF$.
\end{proof}

Next, we describe extremal contractions of transversely projectively flat foliations of dimension at most $2$ on complex projective manifolds. 

\begin{prop}\label{prop:rational_leaves}
Let $\cF$ be regular foliation of dimension $1$ on a compact K\"ahler manifold $X$. Suppose that the normal bundle $\sN$ of $\cF$ is projectively flat with $c_1(\sN)=0$. If $X$ contains a rational curve $C$, then $\cF$ is induced by a $\mathbb{P}^1$-bundle structure $f\colon X \to Y$ onto a finite \'etale quotient of a complex torus. In addition, $C$ is a fiber of $f$.
\end{prop}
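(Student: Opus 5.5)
First I show that $C$ is a leaf of $\cF$. Let $\nu\colon \mathbb{P}^1 \to X$ be the normalization of $C$. Since $\mathbb{P}^1$ is a simply connected compact K\"ahler manifold, Lemma \ref{lemma:simply_connected_subvarieties_tangent} shows that $\nu(\mathbb{P}^1)$ is tangent to $\cF$. Because $\cF$ is regular of dimension $1$, $C$ is then contained in a leaf, and the leaf through any point of $C$ contains $C$. A compact curve tangent to a regular rank-one foliation is itself a closed leaf, so $C$ is a compact leaf. It is also smooth, since a regular foliation has smooth leaves. Hence $C\cong\mathbb{P}^1$ and $T_\cF|_C \cong T_C\cong \sO(2)$. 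The normal bundle of $C$ in $X$ is $\sN|_C$, which is trivial because $\sN$ is projectively flat with $c_1(\sN)=0$ and $C$ is simply connected.

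Next I spread this out to all leaves. Since $\sN_{C/X}\cong\sO_C^{\oplus q}$, the deformations of $C$ are unobstructed and the Douady space is smooth of dimension $q$ at $[C]$. The deformations of $C$ cover an open neighbourhood of $C$ and are rational curves, so by the first step they are leaves. Using Reeb local stability with trivial holonomy (the leaf $C$ is simply connected), the set of points whose leaf is a smooth rational curve with trivial normal bundle is open. It is also closed: a limit of such leaves is a connected curve of bounded degree with respect to a K\"ahler class (the degree is constant in the family), all of whose components are rational. By the first step each component is tangent to $\cF$, and regularity then forces the limit to be a single leaf $\mathbb{P}^1$. Hence all leaves are compact and simply connected, with trivial holonomy. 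Reeb stability then gives a smooth morphism $f\colon X\to Y$ onto a compact complex manifold whose fibres are the leaves. This makes $f$ a $\mathbb{P}^1$-bundle in the analytic sense, by Fischer--Grauert, and $C$ is one of its fibres. I expect this closedness/compactness step to be the main obstacle. It needs a uniform volume bound for the leaves near a limit point, which the constant K\"ahler degree in the family should supply, and Bishop's theorem should then give convergence.

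Finally I identify the base. Since $f$ is smooth with connected fibres, $\sN\cong f^*T_Y$, and the Bott partial connection on $\sN$ is the canonical one of Example \ref{exmp:canonical_connection_pull_back_foliation}. By Lemma \ref{lemma:descent}(2), $T_Y$ is projectively flat. As $f$ is proper, Lemma \ref{lemma:descent} together with Theorem \ref{thm:vanishing} also shows that $T_Y$ admits a connection, so $c_1(Y)=0$.

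The base $Y$ is a compact K\"ahler manifold, being a smooth image of a K\"ahler manifold with $\mathbb{P}^1$ fibres. If $\dim Y\ge 2$, Theorem \ref{thm:projectively_flat_spaces} applies with $U=Y$, and since $Y$ is smooth the quasi-\'etale cover $T\to Y$ from a torus is \'etale. If $\dim Y=1$, then $c_1(Y)=0$ forces $Y$ to be an elliptic curve. If $\dim Y=0$, then $X=\mathbb{P}^1$ and $Y$ is a point, which is a trivial torus. In every case $Y$ is a finite \'etale quotient of a complex torus, which proves the proposition.
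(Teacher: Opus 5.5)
Your first two steps are sound and match the paper. $C$ is a leaf by Lemma \ref{lemma:simply_connected_subvarieties_tangent}, and your Douady/Reeb/Bishop argument sketches what the paper imports directly from \cite[Corollary 2.11]{hoering_split}. For the volume bound, work on the connected component of your open saturated set, on which all leaves are homologous.

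The gap is in the identification of the base. Lemma \ref{lemma:descent}(2) requires a flat projective connection on the \emph{foliated} bundle $(f^*T_Y,\nabla)$, i.e.\ one compatible with the Bott partial connection. Theorem \ref{thm:vanishing} requires $\cF$ to be transversely projectively flat and $q\ge 2$. The proposition only assumes that $\sN$ is projectively flat as a bare vector bundle, and the paper keeps the two notions apart deliberately: in the remark after Theorem \ref{thm:vanishing}, compatibility fails and $c_1(\sN)\neq 0$. Note also that you only use $c_1(Y)=0$ when $\dim Y=1$, i.e.\ when $q=1$. That is exactly the case excluded by Theorem \ref{thm:vanishing}.

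Both points are easy to repair here.
\begin{itemize}
\item Compatibility is automatic. The difference between the fibrewise part of a projective connection on $f^*T_Y$ and the canonical partial connection is a global section of $\Omega^1_{X/Y}\otimes f^*\bigl(\sE\hspace{-0.07cm}\textit{nd}_{\,\sO_Y}(T_Y)/\sO_Y\textup{id}\bigr)$. Its restriction to each fibre lies in $H^0\bigl(\mathbb{P}^1,\sO_{\mathbb{P}^1}(-2)^{\oplus N}\bigr)=0$, so Lemma \ref{lemma:descent}(2) does apply. Alternatively, descend the representation using $\pi_1(X)\cong\pi_1(Y)$.
\item $c_1(Y)=0$ follows directly from $f^*c_1(Y)=c_1(\sN)=0$. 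Indeed $f^*$ is injective on real cohomology, since integrating $f^*\alpha\wedge\omega$ along the fibres returns $\textup{vol}(F)\,\alpha$.
\end{itemize}

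The paper's route is more economical: it never descends projective flatness to $Y$. It applies Fact \ref{fact:chern_classes} to $\sN$ on $X$ to get $c_1(\sN)=c_2(\sN)=0$, hence $c_1(Y)=c_2(Y)=0$ via $\sN\cong f^*T_Y$. It then concludes with \cite[Chapter IV Corollary 4.15]{kobayashi_diff_geom_vb} on the compact K\"ahler manifold $Y$. This avoids both Theorem \ref{thm:projectively_flat_spaces} and the machinery of \cite{CGGN22} behind it.
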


\begin{proof}
By Lemma \ref{lemma:simply_connected_subvarieties_tangent}, the curve $C$ is leaf of $\cF$. In particular, $C$ is a smooth rational curve. Then \cite[Corollary 2.11]{hoering_split} applies to show that $\cF$ is induced by a submersion $f \colon X \to Y$ onto a compact complex manifold. But then $\sN \cong f^*T_Y$. In particular, we have $c_1(Y)=0$ and $c_2(Y)=0$ by Fact \ref{fact:chern_classes}. By \cite[Chapter IV Corollary 4.15]{kobayashi_diff_geom_vb}, $Y$ is then covered by a complex torus. This finishes the proof of the proposition.
\end{proof}

\begin{prop}\label{prop:dimension_two_mmp}
Let $\cF$ be regular foliation of dimension $2$ on a complex projective manifold $X$. Suppose that the normal bundle $\sN$ of $\cF$ is projectively flat with $c_1(\sN)=0$. Suppose furthermore that $K_X$ is not nef, and let $f \colon X \to Y$ be $K_X$-negative extremal contraction. Then one of the following holds.
\begin{enumerate}
\item The map $f$ is a smooth morphism onto a finite \'etale quotient of an abelian variety, and the foliation $\cF$ is induced by $f$.
\item The variety $Y$ is smooth, and $f$ is the blow-up of $Y$ along a smooth subvariety $Z$ of codimension $2$. Moreover, $Z$ is a finite \'etale quotient of an abelian variety. In addition, there is a regular foliation $\cG$ on $Y$ whose normal bundle $\sN_\cG$ is projectively flat with $c_1(\sN_\cG)=0$ such that $Z$ is everywhere transverse to $\cG$ and $\cF=f^{-1}\cG$.
\item The morphism $f$ is a conic bundle with smooth discriminant locus $\Delta$. Moreover, any connected component of $\Delta$ is a finite \'etale quotient of an abelian variety. In addition, there is a regular foliation $\cH$ on $Y$ whose normal $\sN_\cH$ is projectively flat with $c_1(\sN_\cH)=0$ such that $\Delta$ is everywhere transverse to $\cH$ and $\cF=f^{-1}\cH$.
\end{enumerate}
\end{prop}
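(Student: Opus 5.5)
Let $R$ be the extremal ray contracted by $f$, and let $C$ be a rational curve with $[C]\in R$ of minimal anticanonical degree. Lemma \ref{lemma:simply_connected_subvarieties_tangent} applies to any rational curve, and more generally to the normalization of any rationally connected subvariety, and shows that every rational curve in $X$ is tangent to $\cF$. In particular, every fiber of $f$ (or at least every rational curve in it, and fibers of $K_X$-negative contractions are rationally chain connected) is contained in a leaf of $\cF$. Since $\cF$ has dimension $2$, we get $\dim F \le 2$ for every fiber $F$ of $f$. We then split according to the type of $f$ using Wiśniewski's inequality $\dim E + \dim F \ge \dim X + \ell(R) - 1$, where $E$ is the exceptional locus. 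We also use that $K_\cF\cdot C = K_X\cdot C<0$, since $c_1(\sN)=0$.

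\emph{Case 1: $f$ of fiber type with $\dim X-\dim Y=2$.} General fibers are surfaces contained in leaves, so they are leaves, and $\cF$ is induced by $f$ on a dense open set. The foliation then has compact leaves. By the Reeb stability argument as in the proof of Theorem \ref{thm:compact_leaves}, it is induced by an equidimensional morphism, which must coincide with $f$. Theorem \ref{thm:compact_leaves} then shows that after a finite étale cover $f$ becomes smooth onto a torus. We want to descend this: $Y$ is then a quotient with $T_Y$ projectively flat and $c_1=0$, and we argue as in Proposition \ref{prop:rational_leaves} and Theorem \ref{thm:projectively_flat_spaces} to get (1). Smoothness of $f$ should follow because all fibers are leaves and $\cF$ is regular, so there are no multiple fibers for an extremal contraction of relative Picard number one; this needs checking.

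\emph{Case 2: $f$ of fiber type with $\dim X-\dim Y=1$.} This is a conic bundle since $\ell(R)\le 3$ and fibers have dimension at most $2$. A $2$-dimensional fiber would be a leaf; I would rule these out by a local computation, or by showing that the discriminant is smooth. The curve fibers are tangent to $\cF$, so $\cF$ contains $T_{X/Y}$ on the smooth locus and hence projects to a rank-one foliation $\cH$ on $Y$ with $\sN_\cF\cong f^*\sN_\cH$ generically. By Lemma \ref{lemma:descent}, $\sN_\cH$ is projectively flat away from codimension $2$; Lemma \ref{lemma:flat_extension} then extends this, with $c_1=0$ coming from the projection formula. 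Singular fibers lie in leaves, so the leaves of $\cF$ over $\Delta$ contain the whole singular conic. Regularity of $\cF$ at the node forces $\Delta$ to be smooth and transverse to $\cH$, giving (3). Components of $\Delta$ carry $\sN_\cH|_\Delta\cong T_\Delta$ up to twist, which is projectively flat with trivial $c_1$. Applying Theorem \ref{thm:projectively_flat_spaces} (or the Kähler–Einstein argument) shows each component is an étale quotient of an abelian variety.

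\emph{Case 3: $f$ birational.} Fibers are contained in leaves and have dimension at most $2$. Wiśniewski's inequality gives $E$ a divisor with $1$-dimensional fibers, or fibers of dimension $2$ in codimension-$3$ loci. I expect to show $f$ is the blow-up of a smooth codimension-$2$ center $Z$ (Ando's theorem), with the $\mathbb{P}^1$ fibers tangent to $\cF$. Then $\cF$ descends to $\cG$ on $Y$ regular near $Z$ and transverse to it, as in Example \ref{example:blow_up} read backwards; $\sN_\cG$ is projectively flat by the same descent/extension argument. $Z$ is transverse, so $\sN_\cG|_Z$ contains $N_{Z/Y}$-complementary data. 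The key is that $T_Z$ maps isomorphically onto $\sN_\cG|_Z$, given $\dim Z=\dim X-2=q$ and transversality, so $T_Z$ is projectively flat with $c_1=0$ and $Z$ is a torus quotient.

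The main obstacle is excluding the small contractions and divisorial contractions to points or with $2$-dimensional fibers, and proving regularity of the descended foliations. For these I would use that the leaf through such a fiber is the fiber itself, hence compact with trivial holonomy-type normal bundle, contradicting negativity of $E|_F$.
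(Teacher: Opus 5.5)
Your skeleton matches the paper's. Fibres lie in leaves, so $\dim X-\dim Y\le 2$; you reduce to blow-ups and conic bundles, descend $\cF$, and conclude from $T_Z\cong\sN_\cG|_Z$ (resp.\ $T_\Delta\cong\sN_\cH|_\Delta$) and $c_1=c_2=0$. But the step you call the main obstacle is a genuine gap, and it must be settled before Ando's or Wi\'sniewski's classification can be used. By Wi\'sniewski's inequality, every nontrivial fibre of a small contraction is $2$-dimensional. In Case~2, ``conic bundle since $\ell(R)\le 3$'' does not follow without knowing that all fibres are curves. Your suggested contradiction with ``negativity of $E|_F$'' is not carried out, and it has no divisor $E$ to use in the small case.

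The missing argument is short. Suppose some fibre has a $2$-dimensional component $F$. The whole connected fibre lies in one $2$-dimensional leaf, so $F$ is that leaf (hence smooth) and is the whole fibre. It is simply connected: the paper gets $\mathbb{P}^2$ or a Hirzebruch surface from \cite[Theorem 1.9]{andreatta_wisniewski_view}, or one can note it is a smooth rationally chain connected surface. A compact simply connected leaf forces $\cF$ to be induced by a submersion $g$, by \cite[Corollary 2.11]{hoering_split}. The rigidity lemma then makes $f$ contract every fibre of $g$, so $\dim X-\dim Y=2$, a contradiction. Now \cite[Theorem 1.2]{wisn_crelle} gives $Y$ smooth and $f$ a blow-up along a smooth codimension-$2$ centre or a conic bundle. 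The same observation settles your Case~1 directly: general fibres are simply connected leaves, so $\cF$ is given by a submersion, there are no multiple fibres, and $\sN\cong f^*T_Y$ yields $c_1(Y)=c_2(Y)=0$. This also avoids Theorem~\ref{thm:compact_leaves}, which assumes $\cF$ transversely projectively flat; here only $\sN$ is assumed projectively flat.

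The second gap is regularity of the descended foliations and transversality. You assert these (``Example~\ref{example:blow_up} read backwards'', ``regularity at the node forces\dots'') without proof. The paper argues as follows. Let $F\cong\mathbb{P}^1$ be a fibre of $E\to Z$, or a component of a singular conic. Then $T_F\subset T_\cF|_F$ and $K_\cF\cdot F=K_X\cdot F=-1$ give $T_\cF|_F\cong\sO_{\mathbb{P}^1}(2)\oplus\sO_{\mathbb{P}^1}(-1)$. The trivial bundle $\sN|_F$ cannot surject onto $N_{E/X}|_F\cong\sO_{\mathbb{P}^1}(-1)$, so the map $T_\cF|_F\to N_{E/X}|_F$, which kills $T_F$, is nonzero. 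Hence it is an isomorphism on the $\sO_{\mathbb{P}^1}(-1)$ summand, and $E$ is everywhere transverse to $\cF$. In the conic bundle case one gets $T_X|_F\cong\sO_{\mathbb{P}^1}(2)\oplus\sO_{\mathbb{P}^1}(-1)\oplus\sO_{\mathbb{P}^1}^{\oplus n-2}$ with $n=\dim X$, and \cite[Lemma 4.7]{druel_bbcd2} gives reduced fibres and smooth $\Delta$.

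Regularity comes from $c_1(\sN)=0$. It gives $\det\sN\cong f^*\sL$ with $\sL\cong\det\sN_\cG$. The twisted $(n-2)$-form defining $\cG$ pulls back to the nowhere vanishing form defining $\cF$, so it vanishes nowhere.

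Finally, your descent of projective flatness in Case~2 is misstated, though fixable. $f$ fails to be smooth over the divisor $\Delta$, not over a codimension-$2$ set, so Lemma~\ref{lemma:flat_extension} does not apply. Lemma~\ref{lemma:descent} also needs a projective connection compatible with the partial connection along the fibres, which is not among the hypotheses. The paper instead observes that the representation defining the projective structure on $\sN$ factors through $\pi_1(X)\to\pi_1(Y)$.
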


\begin{proof}
Let $n$ be the dimension of $X$. Any fiber of $f$ is rationally chain connected by \cite[Corollary 1.4]{hacon_mckernan}, and therefore contained in a leaf of $\cF$ by Lemma \ref{lemma:simply_connected_subvarieties_tangent}. This immediately implies $\dim X - \dim Y \le 2$. 

\medskip

If $\dim X - \dim Y = 2$, then $\cF$ is induced by $f$. Arguing as in the proof of Proposition \ref{prop:rational_leaves} above, we see that $f$ is a smooth morphism onto a finite \'etale quotient of an abelian variety.

\medskip

Suppose from now on $\dim X - \dim Y \le 1$. 

\medskip

Let $F$ be an irreducible component of a fiber of $f$. If $\dim F = 2$, then $F$ is a leaf of $\cF$. In particular, $F$ is smooth. By \cite[Theorem 1.9]{andreatta_wisniewski_view}, either $F$ is a projective plane, or $F$ is a Hirzebruch surface. In either case, $F$ is simply connected. Then \cite[Corollary 2.11]{hoering_split} applies to show that $\cF$ is induced by a submersion onto a complex projective manifold. Moreover, any leaf of $\cF$ is a fiber of $f$ by the Rigidity Lemma. In other words, $\cF$ is induced by $f$. This implies $\dim X - \dim Y = 2$, a contradiction. This shows that any fiber of $f$ has dimension at most $1$. By \cite[Theorem 1.2]{wisn_crelle}, we conclude that $Y$ is smooth, and either $f$ is the blow-up of $Y$ along a smooth subvariety $Z$ of codimension $2$, or $f$ is a conic bundle. 

\medskip

Suppose first that $f$ is the blow-up of $Y$ along a smooth subvariety $Z$ of codimension $2$. Let $E$ be its exceptional locus, and let $F\cong \mathbb{P}^1$ be any fiber of $f|_E \colon E \to Z$. Then $T_\cF|_F\cong \sO_{\mathbb{P}^1}(2)\oplus \sO_{\mathbb{P}^1}(-1)$ since $T_F \subset T_\cF$ by Lemma \ref{lemma:simply_connected_subvarieties_tangent} and $K_\cF \cdot F = K_X \cdot F = -1$. In particular, $E$ is everywhere transverse to $\cF$. Let $\cG$ be the (possibly singular) foliation on $Y$ induced by $\cF$. Since $c_1(\sN)=0$, there exists a line bundle $\sL$ on $Y$ such that $\det \sN \cong f^* \sL$. Note that $\sL \cong \det \sN_\cG $. Let $\omega \in H^0(Y,\Omega_Y^{n-2}\otimes \sL)$ be a twisted $(n-2)$-form defining $\cG$.
Then $f^*\omega \in H^0(X,\Omega_X^{n-2}\otimes \det \sN)$ is a twisted $(n-2)$-form defining $\cF$. This immediately implies that $\omega$ is nowhere vanishing. Therefore, $\cG$ is a regular foliation, and $\sN \cong f^*\sN_\cG$. Let 
\[
\rho \colon \pi_1(X) \to \textup{PGL}(n-2,\mathbb{C})
\]
be a representation that defines the projectively flat structure on $\sN$. Then $\rho$
factors through $\pi_1(X) \twoheadrightarrow \pi_1(Y)$, and the induced representation
\[
\rho \colon \pi_1(Y) \to \textup{PGL}(n-2,\mathbb{C})
\]
defines a projectively flat structure on $\sN_\cG$. Also, local calculations show that $Z$ is everywhere transverse to $\cG$ since $E$ is everywhere transverse to $\cF$. This implies that $T_Z \cong \sN_\cG|_Z$. In particular, we have $c_1(Z)=0$ and $c_2(Z)=0$ by Fact \ref{fact:chern_classes}. By \cite[Chapter IV Corollary 4.15]{kobayashi_diff_geom_vb}, $Z$ is a finite \'etale quotient of an abelian variety.

\medskip

Suppose finally that $f$ is a conic bundle structure on $X$ with discriminant locus $\Delta$. Let $F$ be an irreducible component of some fiber of $f$ over the discriminant locus. Then $F$ is a smooth rational curve with $-K_X \cdot F =1$. Moreover, $F$ is contained in a leaf of $\cF$ by Lemma \ref{lemma:simply_connected_subvarieties_tangent}. Thus $T_\cF|_F\cong \sO_{\mathbb{P}^1}(2)\oplus \sO_{\mathbb{P}^1}(-1)$ and $\sN|_F \cong \sO_{\mathbb{P}^1}^{\oplus n -2}$. This in turn implies 
$T_X|_F\cong \sO_{\mathbb{P}^1}(2)\oplus \sO_{\mathbb{P}^1}(-1)\oplus \sO_{\mathbb{P}^1}^{\oplus n -2}$. 
By \cite[Lemma 4.7]{druel_bbcd2}, we conclude that any fiber of $f$ is reduced and that $\Delta$ is smooth.
Moreover, $\cF$ is transverse to $f^{-1}(\Delta)$ at a general point of $F$.

By Lemma \ref{lemma:simply_connected_subvarieties_tangent} again, irreducible components of fibers of $f$ are tangent to $\cF$. This implies that there is a (possibly singular) foliation $\cH$ of dimension $1$ on $Y$ such that $\cF = f^{-1}\cH$. Since $c_1(\sN)=0$, there exists a line bundle $\sM$ on $Y$ such that $\det \sN \cong f^* \sM$. Note that $\cH$ is generically transverse to $\Delta$ since $\cF$ is transverse to $f^{-1}(\Delta)$ at a general point of $F$. This easily implies $\sM \cong \det \sN_\cH$. Let $\alpha \in H^0(Y,\Omega_Y^{n-2}\otimes \sM)$ be a twisted $(n-2)$-form defining $\cH$. Then $f^*\omega \in H^0(X,\Omega_X^{n-2}\otimes \det \sN)$ is a twisted $(n-2)$-form defining $\cF$. This immediately implies that $\alpha$ is nowhere vanishing. Thus $\cH$ is a regular foliation, and $\sN\cong f^*\sN_\cH$. Moreover, $\cH$ is everywhere transverse to $\Delta$ since $\cF$ is transverse to $f^{-1}(\Delta)$ at a general point of any fiber $F$ of $f$ over the discriminant locus. Let 
\[
\rho \colon \pi_1(X) \to \textup{PGL}(n-2,\mathbb{C})
\]
be a representation that defines the projectively flat structure on $\sN$. Then $\rho$
factors through $\pi_1(X) \twoheadrightarrow \pi_1(Y)$, and the induced representation
\[
\rho \colon \pi_1(Y) \to \textup{PGL}(n-2,\mathbb{C})
\]
defines a projectively flat structure on $\sN_\cH$. 
Note that $T_\Delta \cong \sN_\cH|_\Delta$ since $\cH$ is everywhere transverse to $\Delta$. Thus, we have $c_1(\Delta)=0$ and $c_2(\Delta)=0$ by Fact \ref{fact:chern_classes}.  By \cite[Chapter IV Corollary 4.15]{kobayashi_diff_geom_vb} again, any connected component of $\Delta$ is a finite \'etale quotient of an abelian variety. This finishes the proof of the proposition.
\end{proof}

Finally, we prove a special case of the abundance conjecture.

\begin{prop}\label{prop:abundance}
Let $\cF$ be regular foliation of dimension $p \ge 1$ on a complex projective manifold $X$. Suppose that the normal bundle $\sN$ of $\cF$ is projectively flat with $c_1(\sN)=0$, and that $K_X$ is nef. Suppose in addition that the abundance conjecture holds for complex projective manifolds of dimension at most $p-1$, and that $\cF$ is not algebraically integrable. Then $K_X$ is semiample.
\end{prop}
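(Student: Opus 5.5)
The plan is to reduce to the algebraic part of $\cF$ and then use either the abundance hypothesis in dimension at most $p-1$ or a flatness argument. Since $c_1(\sN)=0$, adjunction gives $K_\cF\equiv K_X$, and it suffices to show that $K_X$ is semiample up to numerical equivalence. Numerical equivalence is enough here: once some multiple of $K_X$ is numerically equivalent to a semiample divisor, the nefness of $K_X$ together with the standard fact that a nef divisor numerically equivalent to a semiample one on a projective manifold is semiample (via Kawamata's abundance for $\nu=\kappa$) finishes the argument.

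\textbf{Step 1 (algebraic reduction).} Consider the algebraic part $\cF^{\textup{alg}}\subseteq\cF$, the maximal algebraically integrable subfoliation. Because $\cF$ is not algebraically integrable, its rank $r$ satisfies $r\le p-1$.

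\begin{itemize}
\item \textbf{Case $r=0$.} The leaves of $\cF$ contain no algebraic subvarieties through general points. I would then use Theorem \ref{thm:vanishing} and the projectively flat structure on $\sN$ to show that $T_\cF$ is semistable with $K_\cF$ nef and pseudo-effective. By the Campana--P\u{a}un/Bogomolov--McQuillan criterion, failure of $K_\cF\equiv0$ would produce algebraic leaves. I then expect $K_\cF\equiv 0$, hence $K_X\equiv0$, and so $K_X$ is torsion by Kawamata (abundance for $\kappa=0$ with $c_1=0$).
\item \textbf{Case $r\ge1$.} I take the family of leaves of $\cF^{\textup{alg}}$, giving a rational map $X\dashrightarrow Z$ whose general fibres are closures of leaves. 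These have dimension $r\le p-1$.
\end{itemize}

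\textbf{Step 2 (geometry of the general leaf).} For the general leaf closure $F$, I aim to show that $F$ is smooth and that $K_X|_F\equiv K_F$. The input is that $\sN|_F$ is projectively flat with $c_1=0$ and that $T_\cF/T_{\cF^{\textup{alg}}}$ restricted to $F$ is flat via the Bott connection. Since $K_X$ is nef, $K_F$ is nef, and abundance in dimension $\le p-1$ then gives that $K_F$ is semiample.

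\textbf{Step 3 (descending to the base).} By Lemma \ref{lemma:simply_connected_subvarieties_tangent}, rational curves are tangent to $\cF$, and $K_X$ nef forbids uniruledness issues. I would therefore apply a canonical bundle formula, in the style of Ambro/Fujino--Mori, to the relative Iitaka fibration over $Z$. This writes $K_X$ as the pullback of $K_Z+B_Z+M_Z$ up to semiample fibrewise data. The transverse projective flatness should force the moduli part to be numerically trivial, i.e.\ the family is isotrivial thanks to the flat transverse structure. It should also force $K_Z+B_Z\equiv0$, since the induced foliation on $Z$ is transversely flat with no algebraic part, as in the $r=0$ case. It would then follow that $K_X$ is numerically the pullback of a torsion class plus a relatively semiample class, hence semiample.

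\textbf{Main obstacle.} I expect the hardest part to be Step 3, specifically controlling the moduli part and showing that the fibration defined by the algebraic part is (after étale cover) isotrivial with $K_X$ relatively semiample. The $r=0$ vanishing $K_\cF\equiv 0$ in Step 1 is the second delicate point. My first attempt there would be to combine Theorem \ref{thm:vanishing} with Bogomolov--McQuillan algebraicity applied to positive slopes of $T_\cF$.
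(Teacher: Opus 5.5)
\textbf{There is a genuine gap.} It already shows up in your case $r=0$, and that case is not marginal: for $p=1$ it is the whole proposition, since the abundance hypothesis is then vacuous. Your expectation $K_\cF\equiv 0$ there is false.

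Let $C$ be a curve of genus $g\ge 2$, and let $B=\mathbb{C}^b/\Lambda$ be an abelian variety with linear coordinates $z_1,\ldots,z_b$. Choose $\omega_1,\ldots,\omega_b\in H^0(C,\Omega^1_C)$ generically, so that the period map $H_1(C,\mathbb{Z})\to B$, $\gamma\mapsto(\int_\gamma\omega_1,\ldots,\int_\gamma\omega_b)$, has dense image. The closed forms $\textup{pr}_B^*\dd z_i-\textup{pr}_C^*\omega_i$ define a foliation $\cF$ of dimension $1$ on $X=C\times B$ with trivial normal bundle (Example~\ref{example:linear_foliation}). All its leaves are dense, so $\cF^{\textup{alg}}=0$. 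Yet $K_\cF\cong K_X\cong \textup{pr}_C^*K_C\not\equiv 0$; this is case (3) of Theorem~\ref{thm:dimension_one}.

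The criterion you invoke cannot help. Bogomolov--McQuillan and Campana--P\u{a}un extract algebraic, rationally connected leaves from subsheaves of $T_\cF$ of positive (minimal) slope. Since $K_X$ is nef, $X$ is not uniruled, so $T_X$ has no such subsheaves; a nef, non-trivial $K_\cF$ yields nothing.

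The same example refutes both expectations of Step~3:
\begin{enumerate}
\item On $C\times B\times Y$ with the pulled-back foliation, $Z=C\times B$; the induced foliation on $Z$ has no algebraic part, but $K_Z\not\equiv 0$.
\item Pulling back to $S\times B$ along a smooth non-isotrivial fibration $S\to C$ (a Kodaira fibration) makes the family of leaves of $\cF^{\textup{alg}}$ non-isotrivial.
\end{enumerate}
Beyond these counterexamples, the structural problem is that abundance on the general fibre of an arbitrary fibration does not give abundance of $X$. You need a base of a special kind, for instance of general type or of maximal Albanese dimension, and the leaf space of $\cF^{\textup{alg}}$ has no such property. Moreover, a canonical bundle formula presupposes $K_F\sim_{\mathbb{Q}}0$ on fibres, or an existing relative Iitaka fibration, neither of which you have.

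\textbf{The missing idea} is to work with the monodromy $\rho\colon\pi_1(X)\to\textup{PGL}(q,\mathbb{C})$ of $\sN$ rather than with $\cF^{\textup{alg}}$, because it produces exactly such bases. The paper proceeds as follows.
\begin{enumerate}
\item Pass to a finite \'etale cover so that the Zariski closure $G$ of $\rho(\pi_1(X))$ is connected and $\rho_1\colon\pi_1(X)\to G/\textup{Rad}\,G$ has torsion-free image.
\item Take the $\rho_1$-Shafarevich map. By Campana--Claudon--Eyssidieux its base may be assumed of general type, so abundance reduces to a general fibre $F$.
\item Iterate Stein factorizations of Albanese maps, using Hu's result over bases of maximal Albanese dimension, to reach a positive-dimensional $F_2$ with $q(F_2)=0$. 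There the solvable, torsion-free monodromy is trivial, so $\sN|_{F_2}\cong\sL^{\oplus q}$ with $\sL$ torsion.
\item After the associated cyclic cover and one more Albanese reduction, one gets $F_4$ with $q(F_4)=0$ and $\sN|_{F_4}$ trivial. Then $T_{F_4}\to\sN|_{F_4}$ vanishes, so $F_4$ is tangent to $\cF$.
\item Since $\cF$ is not algebraically integrable, $\dim F_4\le p-1$. Only at this point is the abundance hypothesis used.
\end{enumerate}
In the example above, $\rho$ is trivial and $X$ has maximal Albanese dimension, so the argument terminates immediately.
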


The following example shows to what extend the above result is optimal.

\begin{exmp}
Let $A$ be an abelian variety, and let $Y$ be a complex projective manifold. Let $\cF$ be the pull-back on $X:=Y \times A$ of a linear foliation of codimension at most $1$ on $A$. Then $\cF$ is transversely flat (see Examples \ref{example:linear_foliation} and \ref{example:submersion}). On the other hand, $K_X$ is semiample if and only if $K_Y$ is semiample.
\end{exmp}

\begin{proof}[Proof of Proposition \ref{prop:abundance}]
By \cite[Proposition 2.1]{kawamata}, in order to prove that $K_X$ is semiample, it suffices to prove that $K_X$ is abundant. 

Let $q$ be the codimension of $\cF$, and let
\[
\rho\colon \pi_1(X) \to \textup{PGL}(q,\mathbb{C})
\]
be a representation that defines the projectively flat structure on the normal bundle $\sN$ of $\cF$. Let $\textup{G} \subseteq \textup{PGL}(q,\mathbb{C})$ be the Zariski closure of $\rho(\pi_1(X))$. This is a linear algebraic group which has finitely many connected components. Applying Selberg's Lemma and passing to an appropriate finite \'etale cover of $X$, we may assume without loss of generality that $\textup{G}$ is connected and that the image of the induced representation
\[
\rho_1 \colon \pi_1(X) \to \textup{G} \to \textup{G}/\textup{Rad}\,\textup{G}
\]
is torsion-free, where $\textup{Rad}\,\textup{G}$ denotes the radical of $\textup{G}$. Let 
\[
\textup{sh}_{\rho_1}\colon X \dashrightarrow Y
\]
be the $\rho_1$-Shafarevich map, where $\textup{sh}_{\rho_1}$ is dominant and
$Y$ is a smooth projective variety. The rational map $\textup{sh}_{\rho_1}$ is almost proper with connected fibers. By \cite[Th\'eor\`eme 1]{CCE15}, we may assume without loss of generality that $Y$ is of general type, and that the representation $\rho_1$ factorizes through $\textup{sh}_{\rho_1}$. 

If $\dim Y = \dim X$, then $X$ is of general type and the claim follows from the base-point-free theorem. Suppose from now on that $\dim Y < \dim X$.

Let $F$ be a general (smooth) fiber of $\textup{sh}_{\rho_1}$. By \cite[Proposition 2.23]{druel_proj_flat}, in order to prove that $K_X$ is abundant, it suffices to prove that $K_F$ is abundant. Let $a \colon F \to \textup{A}$ be the Albanese morphism. By construction, we have $\dim \textup{A} = q(F)$. Let $F_1$ be a general (smooth) fiber of the Stein factorization of $F \to a(F)$. 
By \cite[Theorem 1.1]{hu_log_abundance}, in order to prove that $K_F$ is abundant, it suffices to prove that $K_{F_1}$ is abundant. In particular, it is enough to consider the case where $\dim F_1 >0$. Repeating the process finitely many times, we see that it suffices to prove that $K_{F_2}$ is abundant for some smooth projective subvariety $F_2 \subseteq F_1$ (passing through a general point) of dimension $\dim F_2 >0$ and irregularity $q(F_2)=0$. Let 
\[
\rho_2\colon \pi_1(F_2) \to \pi_1(X) \to \textup{PGL}(q,\mathbb{C}).
\]
be the induced representation. By construction, $\rho_2(\pi_1(F_2))$ is solvable and torsion-free. It follows  that $\rho_2$ is the trivial representation since $q(F_2)=0$. Therefore, there exists a line bundle 
$\sL_2$ on $F_2$ such that $\sN|_{F_2}\cong \sL_2^{\oplus q}$. Note that $c_1(\sL_2)=0$ since $c_1(\sN)=0$ by assumption, and hence the line bundle $\sL_2$ is torsion as $q(F_2)=0$. Let $F_3 \to F_2$ be the corresponding cyclic \'etale cover. Then $K_{F_2}$ is abundant if and only if $K_{F_3}$ is. Arguing as above, we see that it remains to prove that $K_{F_4}$ is abundant for some smooth projective subvariety (passing through a general point) $F_4 \subseteq F_3$ of dimension $\dim F_4 >0$ and irregularity $q(F_4)=0$. Now, the composition 
\[
T_{F_4} \to T_X|_{F_4}\to \sN|_{F_4}\cong \sO_{F_4}^{\oplus q} 
\]
vanishes identically, and hence $T_{F_4} \subseteq T_\cF|_{F_4}$. This implies $\dim F_4 \le p-1$ since $\cF$ is not algebraically integrable, and hence $K_{F_4}$ is abundant since the abundance conjecture holds for complex projective manifolds of dimension at most $p-1$ by assumption. This finishes the proof of the proposition.
\end{proof}

\section{Transversely flat foliations of dimension 1}\label{dimension_one}

In this section we describe the structure of regular foliations of dimension $1$ whose normal bundle is projectively flat with zero first Chern class. Before we give the proof of Theorem \ref{thm_intro:dimension_one}, we need the following auxiliary results.

\begin{prop}\label{prop:av}
Let $X$ be a compact K\"ahler manifold with $c_1(X)=0$, and let $\cF$ be regular foliation on $X$. Suppose that the normal bundle $\sN$ of $\cF$ is projectively flat with $c_1(\sN)=0$. Then there exists a complex torus $T$ as well as a simply connected compact K\"ahler manifold $Y$ with $c_1(Y)=0$, and a finite \'etale cover $\gamma \colon T \times Y \to X$ such that $\gamma^{-1}\cF$ is the pull-back of a linear foliation on $T$.
\end{prop}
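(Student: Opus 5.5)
By the Beauville--Bogomolov decomposition theorem, after replacing $X$ by a finite \'etale cover we may assume $X \cong T \times Y$. Here $T$ is a complex torus and $Y$ is a simply connected compact K\"ahler manifold with $c_1(Y)=0$, namely a product of Calabi--Yau and irreducible holomorphic symplectic manifolds. Finite \'etale pull-backs preserve all hypotheses: by Example \ref{example:submersion}, $\gamma^{-1}\cF$ is regular with normal bundle $\gamma^*\sN$, which is projectively flat with $c_1=0$. The goal is then to show that $\cF$ is the pull-back under $\textup{pr}_T$ of a linear foliation on $T$.

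\emph{Step 1: the fibers of $\textup{pr}_T$ are tangent to $\cF$.} For every $t \in T$, apply Lemma \ref{lemma:simply_connected_subvarieties_tangent} to the inclusion $Y \cong \{t\}\times Y \hookrightarrow X$. Since $Y$ is simply connected and compact K\"ahler, and $\sN$ is projectively flat with $c_1(\sN)=0$, each slice $\{t\}\times Y$ is tangent to $\cF$. Hence $T_{X/T}=\textup{pr}_Y^*T_Y \subseteq T_\cF$. It follows that the composition $\textup{pr}_T^*T_T \to T_X \to \sN$ is surjective, and $\sN$ is a quotient of the trivial bundle $\sO_X\otimes H^0(T,T_T)$.

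\emph{Step 2: $\cF$ descends to $T$ and is linear.} Dually, $\sN^\vee \subseteq \textup{pr}_T^*\Omega^1_T \cong \sO_X \otimes V$ with $V:=H^0(T,\Omega^1_T)$. This gives a holomorphic map $\phi\colon X \to \textup{Gr}(q,V)$, sending $x$ to the fiber $\sN^\vee_x \subseteq V$. Restricted to a slice $\{t\}\times Y$, the map $\phi$ is constant, because $Y$ is compact and simply connected. One way to see this: $\sN^\vee|_{\{t\}\times Y}$ is trivial there, since a projectively flat bundle with $c_1=0$ on a simply connected manifold is trivial (as used in the proof of Lemma \ref{lemma:simply_connected_subvarieties_tangent}), and a trivial subbundle of a trivial bundle on a compact manifold corresponds to a constant map to the Grassmannian. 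Alternatively, since $Y$ is compact, any holomorphic map $Y\to \textup{Gr}$ pulling back the tautological bundle to a trivial one is constant. Therefore $\phi$ factors through $\textup{pr}_T$, and $\sN^\vee = \textup{pr}_T^*\sM$ for a subbundle $\sM\subseteq \Omega^1_T$, so $\cF = \textup{pr}_T^{-1}\cG$ for the regular foliation $\cG$ on $T$ annihilated by $\sM$.

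It remains to show that $\cG$ is linear, i.e. that $\psi\colon T \to \textup{Gr}(q,V)$, $t\mapsto \sM_t$, is constant. First, $\sN_\cG$ is projectively flat with $c_1=0$: projective flatness descends by Lemma \ref{lemma:descent}(2), or directly because $\pi_1(X)\to\pi_1(T)$ is an isomorphism. Second, $\sN_\cG^\vee=\sM$ is a subbundle of the trivial bundle $\Omega^1_T$. Since $\sM$ is projectively flat with $c_1(\sM)=0$, all its Chern classes vanish by Fact \ref{fact:chern_classes}, so it is numerically flat, and in particular nef with nef dual. Suppose $\psi$ were nonconstant. Then $\psi^*\sO_{\textup{Gr}}(1)=\det \sM^\vee$ would be nonzero on some curve, contradicting $c_1(\sM)=0$. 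Hence $\psi$ is constant, which means $\sM = \sO_T\otimes W$ for a fixed subspace $W\subseteq V$, and $\cG$ is the linear foliation defined by the closed forms in $W$.

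The main obstacle is the last step, ruling out a nonconstant $\psi$ on a non-algebraic torus, together with checking that the Grassmannian argument is rigorous on Kähler manifolds. The fact needed is that $\psi^*\sO(1)$ is numerically trivial, which forces $\psi$ to be constant because $\sO(1)$ is ample. On a Kähler manifold this can be argued with a Kähler form: the pull-back of the Fubini--Study/Plücker form is a semipositive $(1,1)$-form representing $c_1(\sM^\vee)=0$, so it vanishes identically. This forces $d\psi=0$.
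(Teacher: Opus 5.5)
Your proposal is correct and is essentially the paper's proof: Beauville--Bogomolov, Lemma~\ref{lemma:simply_connected_subvarieties_tangent} applied to the slices $\{t\}\times Y$, descent of $\cF$ to a foliation $\cG$ on $T$ with $c_1(\sN_\cG)=0$, then linearity of $\cG$, where your Pl\"ucker argument (a semipositive pull-back of a Fubini--Study form that is cohomologous to zero must vanish) spells out what the paper dismisses as ``easily implies''. That argument only uses $c_1(\sM)=0$ and $\sM\subseteq\sO_T\otimes V$, so you can drop the detours through projective flatness of $\sN_\cG$ and numerical flatness of $\sM$; in particular, the inference ``all Chern classes vanish, hence numerically flat'' is not valid in general.
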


\begin{proof}
By the Beauville - Bogomolov decomposition theorem, there exists a complex torus $T$ as well as a simply connected compact K\"ahler manifold $Y$ with $c_1(Y)=0$, and a finite \'etale cover $\gamma \colon T \times Y=:X_1 \to X$. The normal bundle $\sN_1$ of the regular foliation $\cF_1:= \gamma^{-1}\cF$ on $X_1$ satisfies $\sN_1 \cong \gamma^*\sN$. In particular, $\sN_1$ is projectively flat with $c_1(\sN_1)=0$. Let $t \in T$ be a point, and set $Y_t=\{t\} \times Y \subseteq T \times Y=X_1$. By Lemma \ref{lemma:simply_connected_subvarieties_tangent}, $Y_t$ is tangent to $\cF_1$. This implies that $\cF_1$ is the pull-back of a regular foliation $\cG$ on $T$ via the projection morphism $\textup{pr}_T\colon X_1 \to T$. Then 
$\sN_1 \cong \textup{pr}_T^{\,*}\sN_{\cG}$, and hence $c_1(\sN_{\cG})=0$. This easily implies that $\cG$ is a linear foliation, completing the proof of the proposition.
\end{proof}

\begin{lemma}\label{lemma:ab_schemes}
Let $f \colon X \to Y$ be an abelian scheme over a smooth projective base, and let $\cF$ be regular foliation with $T_\cF \subseteq T_{X/Y}$. Suppose that the normal bundle $\sN$ of $\cF$ is projectively flat with $c_1(\sN)=0$. Then there exists an abelian variety $A$ as well as a simply connected complex projective manifold $Z$ with $c_1(Z)=0$, and a finite \'etale cover $\gamma \colon A \times Z \to X$ such that $\gamma^{-1}\cF$ is the pull-back of a linear foliation on $A$. In particular, we have $c_1(X)=0$.
\end{lemma}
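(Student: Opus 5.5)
The plan is to reduce to Proposition \ref{prop:av} by first showing that $c_1(X)=0$. From the relative tangent sequence
\[
0\to T_{X/Y}\to T_X\to f^*T_Y\to 0
\]
and the fact that $T_{X/Y}\cong f^*(\textup{Lie}(X/Y))$ is pulled back from the base, we see that $K_X \equiv 0$ is equivalent to $K_Y + \det \textup{Lie}(X/Y)^\vee \equiv 0$ on $Y$.

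To compute $K_Y$, we use the hypothesis $T_\cF\subseteq T_{X/Y}$. This inclusion gives a surjection
\[
\sN = T_X/T_\cF \twoheadrightarrow f^*T_Y,
\]
whose kernel is $T_{X/Y}/T_\cF$. Set $\sA := T_{X/Y}/T_\cF$.

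We next show that $\sA$ is translation invariant on the fibers, so that it descends to $Y$. Any leaf of $\cF$ lies in a fiber $X_y$, an abelian variety. On $X_y$, the restriction $\sN|_{X_y}$ is projectively flat with $c_1=0$. The induced foliation on $X_y$ has normal bundle $\sN_{\cF|X_y} = \sA|_{X_y}$, which is a subbundle of $\sN|_{X_y}$. Indeed, $\sN|_{X_y}$ is an extension of the trivial bundle $\sO^{\oplus \dim Y}$ by $\sA|_{X_y}$.

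Projective flatness with $c_1=0$ on an abelian variety gives numerical flatness. Hence $\sA|_{X_y}$ is a subbundle of a numerically flat bundle with quotient trivial, and so it is itself numerically flat, with $c_1(\sA|_{X_y})=0$. A regular foliation on an abelian variety whose normal bundle has $c_1=0$ is linear, using the same argument as at the end of Proposition \ref{prop:av}. Thus $\cF|_{X_y}$ is a linear foliation. This makes $T_\cF = f^*\sV$ for a subbundle $\sV\subseteq \textup{Lie}(X/Y)$, and $\sA\cong f^*(\textup{Lie}(X/Y)/\sV)$.

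Now take determinants. We get
\[
c_1(\sN) = f^*\bigl(c_1(\textup{Lie}/\sV) - K_Y\bigr) = 0,
\]
so $K_Y \equiv c_1(\textup{Lie}/\sV)$ on $Y$ (pullback is injective on $H^2$ for a smooth proper surjection with a section, e.g. the zero section).

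The key step, and the main obstacle, is to show $c_1(\sV)$ and $c_1(\textup{Lie}/\sV)$ individually, and hence $c_1(\textup{Lie}(X/Y))$, in a way that forces $K_X\equiv 0$. My approach is via the zero section $s\colon Y\to X$. Pull back along $s$:
\[
s^*\sN \cong \textup{Lie}(X/Y)/\sV \oplus T_Y .
\]
The splitting comes from $s$: $s^*T_X = \textup{Lie}\oplus T_Y$. Then $s^*\sN$ is projectively flat with $c_1=0$. By Lemma \ref{lemma:first_chern_class_direct_summand}, both direct summands have $c_1=0$, so $K_Y\equiv 0$ and $c_1(\textup{Lie}/\sV)=0$.

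It remains to control $c_1(\sV)$. The Hodge bundle $\textup{Lie}(X/Y)^\vee$ is nef (a result of Fujita/Kollár), hence $\textup{Lie}/\sV$ is a quotient of an antinef bundle with $c_1=0$. I expect this to force the variation of Hodge structure to be isotrivial in the relevant directions. Failing a direct argument, I would instead use Theorem \ref{thm:vanishing}: $\sN$ carries a connection, so $s^*\sN$ does too, and therefore $T_Y$ as a direct summand admits a holomorphic connection. Since $K_Y\equiv 0$ and $Y$ is projective, Beauville–Bogomolov makes $Y$ a $c_1=0$ manifold. A Griffiths–Kodaira–Spencer period-map argument on $Y$ (no nonconstant maps from simply connected factors with trivial tangent bundle to period domains; torus factor handled by Hodge-bundle nefness and $\deg=0$) then gives that, after a finite étale base change, the abelian scheme becomes isotrivial and hence trivial. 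This yields $c_1(\textup{Lie})=0$ and $c_1(X)=0$.

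Once $c_1(X)=0$ is known, Proposition \ref{prop:av} applies directly. It gives $A\times Z\to X$ finite étale with $\gamma^{-1}\cF$ pulled back from a linear foliation on $A$. Here $A$ is a torus, and it is projective since $X$ is, so $A$ is an abelian variety; $Z$ is projective as well.
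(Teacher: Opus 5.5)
Your strategy is the paper's. The splitting $s^*\sN\cong s^*(T_{X/Y}/T_\cF)\oplus T_Y$ along the zero section, together with Lemma \ref{lemma:first_chern_class_direct_summand}, gives $c_1(Y)=0$. One then shows the abelian scheme becomes trivial after a finite \'etale base change, so $c_1(X)=0$, and Proposition \ref{prop:av} finishes. For the middle step the paper imposes a level-three structure and cites \cite[Lemma 5.9.3]{kollar_sh_inventiones} for the constancy of the moduli map $Y\to\sA(3)$. Your fiberwise-linearity and descent step is unnecessary, because the zero-section splitting uses only $T_\cF\subseteq T_{X/Y}$.

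The isotriviality step, however, is not justified as you wrote it. There are two problems.

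First, Theorem \ref{thm:vanishing} does not apply. The lemma only assumes that $\sN$ is projectively flat as a vector bundle, not that $(\sN,\nablaBott)$ is, and the remark following Theorem \ref{thm:vanishing} shows the distinction is real. This does no harm only because you never actually use the resulting connection on $T_Y$.

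Second, and more seriously, handling the torus factor ``by Hodge-bundle nefness and $\deg=0$'' is circular. The only vanishing you have is $c_1(\textup{Lie}(X/Y)/\mathscr{V})=0$, but you need $c_1(\textup{Lie}(X/Y))=0$, and nefness of $\textup{Lie}(X/Y)^\vee$ does not supply it.

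A correct version of your period-map idea runs as follows.
\begin{enumerate}
\item Pass to a Beauville--Bogomolov cover $Y'=T\times W$, whose universal cover is $\mathbb{C}^m\times W$.
\item The period map of the weight-one variation $R^1f_*\mathbb{Z}$, polarized by an ample class on $X$, is a holomorphic map from $\mathbb{C}^m\times W$ into the Siegel upper half-space. That space is biholomorphic to a bounded domain.
\item Hence the period map is constant: by Liouville in the $\mathbb{C}^m$ direction, and by the maximum principle on the compact factor $W$.
\item The monodromy then fixes a point of the Siegel space. So it lies in a compact subgroup of a discrete group, and is therefore finite.
\item After a further finite \'etale base change $Y''\to Y$ the variation is trivial, so $X\times_Y Y''\cong Y''\times A$. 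This gives $c_1(X)=0$.
\end{enumerate}
This is exactly what Koll\'ar's lemma provides in the paper's argument.
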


\begin{proof}
Let $\Sigma\subseteq X$ be the neutral section of $f$. Note that 
\[
\sN|_\Sigma \cong (T_{X/Y}/ T_\cF)|_\Sigma \oplus T_\Sigma
\]
since the exact sequence 
\[
0 \to T_{X/Y}|_\Sigma \to T_X|_\Sigma \to f^*T_Y|_\Sigma \cong T_\Sigma \to 0
\]
is split and $T_\cF\subseteq T_{X/Y}$ by assumption. By Lemma \ref{lemma:first_chern_class_direct_summand}, we have $c_1(Y)=0$. Replacing $X$ by a finite \'etale cover, if necessary, we may assume without loss of generality that $f$ is equipped with a level three structure. Let $\sA(3)$ be the fine moduli space of polarized abelian varieties with a level three structure. The moduli map $Y \to \sA(3)$ is then the constant morphism by \cite[Lemma 5.9.3]{kollar_sh_inventiones}. In particular, we have $c_1(X)=0$. The claim now follows from 
Proposition \ref{prop:av} above.
\end{proof}

Finally, we describe the structure of transversely projectively flat foliations of dimension $1$ on complex projective manifold. Note that Theorem \ref{thm_intro:dimension_one} is a consequence of Theorem \ref{thm:dimension_one} below together with Theorem \ref{thm:vanishing}.

\begin{thm}\label{thm:dimension_one}
Let $\cF$ be regular foliation of dimension $1$ on a complex projective manifold $X$. Suppose that the normal bundle $\sN$ of $\cF$ is projectively flat with $c_1(\sN)=0$. Then one of the following holds.
\begin{enumerate}
\item The foliation $\cF$ is induced by a $\mathbb{P}^1$-bundle structure $f\colon X \to Y$ onto a finite \'etale quotient of an abelian variety.
\item There exists an abelian variety $A$ as well as a finite \'etale morphism $\gamma\colon A \to X$ such that $\gamma^{-1}\cF$ is a linear foliation.
\item There exist a smooth projective curve $C$ and an abelian variety $B$ as well as a finite \'etale mrophism $\gamma\colon C \times B \to X$ such that $\gamma^{-1}\cF$ induces a flat Ehresmann connection on the projection morphism $\textup{pr}_C \colon C \times B \to C$.
\end{enumerate}
\end{thm}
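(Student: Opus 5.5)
We split according to the sign of $K_X$, using that $K_\cF \equiv K_X$ since $c_1(\sN)=0$.

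\emph{Case 1: $X$ contains a rational curve.} Proposition \ref{prop:rational_leaves} applies directly. It shows that $\cF$ is induced by a $\mathbb{P}^1$-bundle structure onto a finite \'etale quotient of a complex torus, which is projective because $X$ is. This is case (1).

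\emph{Case 2: $X$ contains no rational curve.} Then $K_X$ is nef by the cone theorem. We next need $K_X$ semiample; there are two subcases.
\begin{enumerate}
\item If $\cF$ is not algebraically integrable, we apply Proposition \ref{prop:abundance} with $p=1$. The required abundance hypothesis in dimension $0$ is vacuous, so $K_X$ is semiample.
\item If $\cF$ is algebraically integrable, its leaves are compact. Theorem \ref{thm:compact_leaves} (or directly \cite[Corollary 2.11]{hoering_split} if codimension issues arise) then gives, after a finite \'etale cover, a smooth morphism $f\colon Y\to T$ to a torus inducing $\cF$. Its fibers are curves, of genus $\ge 1$ since there are no rational curves. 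A smooth family of genus $\ge 1$ curves over a torus with transversely projectively flat structure should be isotrivial, so this is absorbed into the analysis below. Alternatively, in this subcase we run the argument below with the Iitaka fibration directly; semiampleness follows from the fibration $f$ and canonical bundle formula considerations.
\end{enumerate}

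Now consider the Iitaka fibration $\phi\colon X\to Z$ of $K_X$.

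\emph{Subcase $\kappa(X)=0$.} Then $K_X$ is torsion. After a finite \'etale cover, Proposition \ref{prop:av} gives $X\cong A\times Y$ with $\gamma^{-1}\cF$ pulled back from a linear foliation on $A$. Since $\cF$ has dimension $1$ and contains $T_Y$, we get $\dim Y=0$. So $X$ is covered by an abelian variety and $\cF$ is linear: case (2).

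\emph{Subcase $\kappa(X)>0$.} A general fiber $F$ of $\phi$ has $K_F$ torsion. By Lemma \ref{lemma:simply_connected_subvarieties_tangent} applied to the Beauville--Bogomolov factors of $F$, those factors are tangent to $\cF$. Hence they are trivial or have dimension $\le 1$, and a simply connected factor cannot be a curve. So $F$ is, up to \'etale cover, an abelian variety. I expect the fibration to become an abelian scheme after removing codimension-two loci and base changing. We must then decide whether $\cF$ is tangent to the fibers.

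\begin{enumerate}
\item If $T_\cF\subseteq T_{X/Z}$, Lemma \ref{lemma:ab_schemes} forces $c_1(X)=0$. This contradicts $\kappa>0$.
\item Otherwise $\cF$ is transverse to the general fiber. Then $\dim Z=1$ and $\cF$ projects to $Z$, giving a flat Ehresmann connection generically. Since $K_X\cdot$ (leaf) relates to $K_Z$ and $\cF$ is regular, the fibration must be smooth, making $\cF$ an everywhere flat Ehresmann connection. Its monodromy acts on the abelian fiber $B$ by automorphisms preserving $\sN|_B\cong T_B$ with projectively flat structure. After a finite \'etale base change killing the linear part of the monodromy, translations remain, and these are absorbed by a further isogeny, yielding $C\times B$: case (3).
\end{enumerate}

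The main obstacle is this last subcase: proving that $\phi$ is smooth and isotrivial with $\dim Z=1$. For that I would use $\sN\cong \phi^*T_Z\oplus T_{X/Z}$ along leaves and Lemma \ref{lemma:first_chern_class_direct_summand}, which forces vanishing of degrees, together with Lemma \ref{lemma:ab_schemes}-type moduli arguments (\cite[Lemma 5.9.3]{kollar_sh_inventiones}).
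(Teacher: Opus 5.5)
Your Case 1, the subcase $\kappa(X)=0$, and the use of Proposition \ref{prop:abundance} with $p=1$ when $\cF$ is not algebraically integrable are all fine. The proof breaks down, however, exactly where you locate the main obstacle, and the tools you propose there do not apply.

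\begin{itemize}
\item Lemma \ref{lemma:ab_schemes} needs an honest abelian scheme over a smooth projective base. Its proof uses the neutral section, Lemma \ref{lemma:first_chern_class_direct_summand} on the compact base, and \cite[Lemma 5.9.3]{kollar_sh_inventiones}. Your Iitaka fibration $\phi$ is only known to have general fibre \'etale-covered by an abelian variety. Removing codimension-two loci destroys the compactness these arguments require.
\item Smoothness of $\phi$, the equality $\dim Z=1$, and the existence of a section are asserted, not proved.
\item In (ii), translations cannot be ``absorbed by an isogeny''. In case (3) the translation part of the monodromy may be infinite: take $C\times E$, with $E$ an elliptic curve, and the foliation defined by $\textup{pr}_E^*\dd z-\textup{pr}_C^*\omega$ for $\omega\in H^0(C,\Omega^1_C)$. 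What must be shown is that the underlying holomorphic $B$-torsor over $C$ becomes trivial after a finite \'etale cover, and that requires a section or a multisection.
\item Your algebraically integrable subcase is also unjustified. Theorem \ref{thm:compact_leaves} assumes $\cF$ \emph{transversely} projectively flat, i.e.\ a projective connection compatible with $\nablaBott$; this is what Lemma \ref{lemma:descent} uses. Theorem \ref{thm:dimension_one} only assumes that $\sN$ is projectively flat. The remarks ``should be isotrivial'' and ``canonical bundle formula considerations'' are not arguments.
\end{itemize}

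The missing idea is Chern-theoretic, and it makes abundance and the Iitaka fibration unnecessary. By Fact \ref{fact:chern_classes}, $c_1(\sN)=0$ forces $c_i(\sN)=0$ for all $i$, so $0\to T_\cF\to T_X\to\sN\to 0$ gives $c_2(X)=0$.

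Once $K_X$ is nef, $X$ is a minimal projective manifold satisfying Miyaoka's equality. Then \cite[Theorem 1.2]{iwai_matsumura_muller_2025} says that $X$ is \'etale-covered in one of two ways:
\begin{itemize}
\item by an abelian variety, in which case Proposition \ref{prop:av} gives case (2);
\item or by an abelian scheme $f\colon Z\to C$ over a curve of genus at least $2$.
\end{itemize}

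In the latter case, set $\cG=\gamma^{-1}\cF$ and argue as follows.
\begin{itemize}
\item Lemma \ref{lemma:ab_schemes} genuinely applies and yields $T_\cG\not\subseteq T_{Z/C}$.
\item On a general fibre $F$, the nonzero map $T_\cG|_F\to f^*T_C|_F\cong\sO_F$ goes between numerically trivial line bundles, because $K_\cG\equiv K_Z$ and $K_Z|_F\equiv 0$. Hence it is an isomorphism, so $\cG$ is transverse along $F$ and $f$ is isotrivial.
\item The zero section then lets you pass to $C\times B$ after a finite \'etale cover.
\item Finally, $K_\cG\equiv\textup{pr}_C^*K_C$ shows that $\cG$ is a flat Ehresmann connection.
\end{itemize}
This route supplies the smoothness, the equality $\dim Z=1$, and the section that your approach leaves open, and the algebraically integrable case needs no separate treatment.
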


\begin{proof}
Suppose first that $K_X$ is not nef. Then $X$ contains a rational curve by the cone theorem and Proposition \ref{prop:rational_leaves} applies to show that $\cF$ is induced by a $\mathbb{P}^1$-bundle structure $f \colon X \to Y$, where $Y$ is a finite \'etale quotient of an abelian variety. 

Suppose from now on that $K_X$ is nef. Note that $K_\cF \equiv K_X$ by the adjunction formula.
The exact sequence
\[
0 \to T_\cF \to T_X \to \sN \to 0 
\]
gives $c_2(X) = 0$ since $c_1(\sN)=0$ and $c_2(\sN)=0$ (see Fact \ref{fact:chern_classes}). By \cite[Theorem 1.2]{iwai_matsumura_muller_2025}, either $X$ is a finite \'etale quotient of an abelian variety, or there exists an abelian scheme $Z \to C$ over a smooth projective curve $C$ of genus at least $2$ as well as a finite \'etale morphism $\gamma\colon Z \to X$.

If $X$ is a finite \'etale quotient of an abelian variety, then Poposition \ref{prop:av} easily implies that we are in case (2) of Theorem \ref{thm_intro:dimension_one}.

Suppose finally that there exist an abelian scheme $Z \to C$ over a smooth projective curve $C$ of genus at least $2$ and a finite \'etale morphism $\gamma\colon Z \to X$. Set $\cG:=\gamma^{-1}\cF$. Lemma \ref{lemma:ab_schemes} then implies that $T_\cG \not\subseteq T_{Z/C}$. Let $F$ be a general fiber of $f$. Then $K_\cG|_F \equiv K_Z|_F \equiv 0$, and therefore $\cG$ is transverse to $f$ at any point in $F$. This in turn implies that $f$ is isotrivial. Replacing $Z$ by a further finite \'etale cover, we may therefore assume that $Z/C=C \times B/C$, where $B$ is an abelian variety. Note also that
\[
K_\cG \equiv K_{C \times B} \equiv \textup{pr}_C^* K_C.
\]
This immediately implies that $\cG$ induces a (flat) Ehresmann connection on the projection morphism $\textup{pr}_C \colon C \times B \to C$, completing the proof of the theorem.
\end{proof}

The following is an immediate consequence of Theorem \ref{thm_intro:dimension_one}. 

\begin{cor}
Let $\cF$ be regular foliation of dimension $1$ on a complex projective manifold $X$. Suppose that the normal bundle $\sN$ of $\cF$ is projectively flat with $c_1(\sN)=0$. Then there exists a finite \'etale cover $\gamma \colon Y \to X$ such that $\gamma^{-1}\cF$ is defined by $\dim X - 1$ pointwise linearly independent holomorphic $1$-forms.
\end{cor}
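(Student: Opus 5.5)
We would go through the three cases of Theorem \ref{thm:dimension_one}, which applies because $c_1(\sN)=0$ in our setting. In each case we exhibit $\dim X-1$ pointwise linearly independent holomorphic $1$-forms on a finite \'etale cover $Y$ that cut out the conormal bundle $\sN_{\gamma^{-1}\cF}^\vee \subseteq \Omega^1_Y$. Since $\sN_{\gamma^{-1}\cF}\cong\gamma^*\sN$, it is enough to produce an \'etale cover $\gamma\colon Y\to X$ such that $\gamma^*\sN^\vee$ is a trivial vector bundle whose trivializing sections are global holomorphic $1$-forms.

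\emph{Case (1).} The foliation is induced by a $\mathbb{P}^1$-bundle $f\colon X\to Y$, and $Y$ has a finite \'etale cover $A\to Y$ by an abelian variety. Set $X':=X\times_Y A$, so that $X'\to X$ is finite \'etale. The pulled-back foliation is induced by $f'\colon X'\to A$, hence its conormal bundle is $f'^*\Omega^1_A\cong\sO_{X'}^{\oplus(\dim X-1)}$. A basis $\alpha_1,\dots,\alpha_{n-1}$ of $H^0(A,\Omega^1_A)$ pulls back to pointwise independent $1$-forms $f'^*\alpha_i$ that define $f'^{-1}\{\text{points}\}$, which is the foliation.

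\emph{Case (2).} The cover is an abelian variety $A$ and $\gamma^{-1}\cF$ is a linear foliation. Its conormal bundle is then spanned by constant $1$-forms: the annihilator of a linear subspace of $H^0(A,T_A)$ is a $(\dim X-1)$-dimensional subspace of $H^0(A,\Omega^1_A)$, and any basis of it works.

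\emph{Case (3).} We have $\gamma\colon C\times B\to X$, and $\cG:=\gamma^{-1}\cF$ is a flat Ehresmann connection on $\textup{pr}_C$. So $T_\cG$ is a line subbundle mapping isomorphically onto $\textup{pr}_C^*T_C$. Take a basis $\beta_1,\dots,\beta_{d}$ of $H^0(B,\Omega^1_B)$, where $d=\dim B=\dim X-1$, and set $\omega_i:=\textup{pr}_B^*\beta_i+\textup{pr}_C^*\theta_i$ for suitable $\theta_i\in H^0(C,\Omega^1_C)$.

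The key point is that the connection is given by a holomorphic lift $v\mapsto v+\phi(v)$ with $\phi\colon \textup{pr}_C^*T_C\to \textup{pr}_B^*T_B\cong \sO^{\oplus d}$. Such a $\phi$ is a global section of $\textup{pr}_C^*\Omega^1_C\otimes\sO^{\oplus d}$. By K\"unneth and compactness of $B$ we have $H^0(C\times B,\textup{pr}_C^*\Omega^1_C)=H^0(C,\Omega^1_C)$, so $\phi$ is given by $d$ holomorphic $1$-forms $\phi_i$ on $C$. Writing $T_B$ in the flat frame $\partial_i$ dual to $\beta_i$, we get $T_\cG$ spanned locally by $v+\sum_i\phi_i(v)\partial_i$. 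The forms $\omega_i:=\textup{pr}_B^*\beta_i-\textup{pr}_C^*\phi_i$ then vanish on $T_\cG$. They are pointwise independent because their restrictions to $T_B$ are. They therefore span the conormal bundle of $\cG$, and they are closed holomorphic.

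The main obstacle is this last step: one has to check that the Ehresmann splitting is really a global section of $\textup{pr}_C^*\Omega^1_C\otimes T_B$ with $T_B$ trivial, and so comes from $C$. The projection formula handles this.
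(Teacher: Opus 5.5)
Your proposal is correct and is the case-by-case deduction from Theorem \ref{thm:dimension_one} that the paper intends when it calls the corollary an immediate consequence. In case (3) you write the Ehresmann splitting as $\phi\in H^0(C\times B,\textup{pr}_C^*\Omega^1_C)^{\oplus \dim B}=H^0(C,\Omega^1_C)^{\oplus \dim B}$ and take $\omega_i=\textup{pr}_B^*\beta_i-\textup{pr}_C^*\phi_i$, which correctly supplies the detail the paper leaves implicit.
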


\begin{question}\label{question:abundance}
Let $\cF$ be regular foliation on a complex projective manifold $X$ with normal bundle $\sN$. Suppose that $\cF$ is transversely projectively flat with $c_1(\sN)=0$. One may ask if there exists a finite \'etale cover $\eta\colon Y \to X$ such that $\eta^* \sN$ is the trivial vector bundle. 
\end{question}

\begin{rem}
Suppose the answer to Question \ref{question:abundance} is yes if $p=2$. Then Theorem \ref{thm_intro:vanishing} and Propositions \ref{prop:dimension_two_mmp} and \ref{prop:abundance} together with the results of Hao, Wang and Zhang \cite{hao_al} on good minimal models with nowhere vanishing holomorphic $1$-forms (see also \cite{church}) then give the structure of transversely projectively flat foliations of dimension $2$ on complex projective manifolds of dimension at least $4$.
\end{rem}

Finally, we prove a special case of an analogue of the abundance conjecture for the space of leaves of a codimension 1 regular foliation. This shows that the answer to Question \ref{question:abundance} is yes if the codimension of $\cF$ is $1$.

\begin{thm}
Let $\cF$ be regular foliation of codimension $1$ on a complex projective manifold $X$. Suppose that the first Chern class of the normal bundle $\sN$ of $\cF$ vanishes. Then $\sN$ is a torsion line bundle.
\end{thm}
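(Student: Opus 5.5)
We prove that the line bundle $\sN$, whose class is numerically trivial, is torsion. There are two steps: first show $\sN$ is flat and unitary, then separate a torsion case from a non-torsion case and rule out the latter with foliation theory.

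\emph{Step 1: $\sN$ is unitary flat.} Since $X$ is projective and $c_1(\sN)=0$, the line bundle $\sN$ lies in $\textup{Pic}^0(X)$. It therefore carries a flat unitary connection, i.e. it comes from a character $\chi\colon \pi_1(X)\to U(1)$. The claim is that $\chi$ has finite image.

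\emph{Step 2: exploit the foliation.} The foliation is given by a nowhere vanishing twisted form $\omega\in H^0(X,\Omega^1_X\otimes\sN)$. We argue in two cases.

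\emph{Case (a): $\omega$ is $\nabla$-closed.} Here $\nabla$ denotes the flat unitary connection on $\sN$. As in the example of twisted $1$-forms with values in flat line bundles, Hodge theory for unitary local systems gives $\nabla^1\omega=0$ automatically, since $X$ is compact Kähler. So we are always in this case, and the integrability condition is automatic. Consequently, on the universal cover (or on the cover $X_\chi$ trivializing $\sN$), $\omega$ becomes a closed holomorphic $1$-form $d g$. Here $g$ is a holomorphic function satisfying $g\circ\sigma=\chi(\sigma)g$ (up to a constant) for deck transformations $\sigma$. If $\chi$ has infinite image, this gives a transversely affine, indeed transversely Euclidean, structure for $\cF$ with linear holonomy group $\chi(\pi_1(X))\subset U(1)$ infinite.

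\emph{Case (b): reduction to the torsion case.} Suppose $\chi$ is non-torsion. We use the Albanese map. A unitary non-torsion character of $\pi_1(X)$ that lies in $\textup{Pic}^0$ pulls back from $\textup{Pic}^0(\textup{Alb}(X))$. A nonzero $H^0(X,\Omega^1_X\otimes\sN)$ for a non-torsion $\sN\in\textup{Pic}^0(X)$ means $\sN$ lies in the cohomology jump locus $V^1_0$-type set $\{L : h^0(\Omega^1_X\otimes L)\neq 0\}$. By Simpson's theorem (and Green--Lazarsfeld / generic vanishing), every component of this locus is a torsion translate of a subtorus. Two consequences follow.
\begin{enumerate}
\item If $\sN$ lies on a positive-dimensional component, the standard structure result for positive-dimensional components yields a morphism $h\colon X\to C$ onto a curve of genus at least $2$ (or onto a higher-dimensional variety of Albanese general type) with $\sN\cong h^*\sM\otimes\tau$, where $\tau$ is torsion. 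In that case $\omega$ is pulled back from $C$, twisted by $\tau$.
\item Hence $\cF$ is induced by $h$ after an étale cover. A nowhere vanishing $h^*$ twisted form then forces $h$ to be smooth and $\sN\cong h^*T_C\otimes\tau$. This contradicts $c_1(\sN)=0$, because $\deg T_C<0$.
\end{enumerate}
The remaining situation is that $\sN$ is an isolated point of the jump locus, and Simpson's theorem says such points are torsion. Either way, $\sN$ is torsion.

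The main obstacle is Step 2(b). There we must correctly invoke the structure of the jump loci $\{L\in\textup{Pic}^0(X): H^0(X,\Omega^1_X\otimes L)\neq0\}$, namely that they are torsion translates of subtori, together with the characterization of positive-dimensional components through fibrations over curves. We also have to check that the nowhere-vanishing condition on $\omega$ rules out the fibration case. The first thing to try is the Beauville/Simpson description for $H^0(\Omega^1\otimes L)$, read off via Hodge symmetry from $H^1(X,L^{-1})$.
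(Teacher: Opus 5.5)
Your outline is the paper's: $\sN$ is unitary flat with character $\chi$, and $\omega$ is $\nabla$-closed by Hodge theory. A non-torsion $\chi$ carrying nonzero twisted $1$-forms must come from a fibration over a curve, and that fibration is incompatible with $c_1(\sN)=0$. Placing $\sN$ directly in $\{L : h^0(\Omega^1_X\otimes L)\neq 0\}$ is a clean substitute for the paper's check that the translation cocycle of the affine holonomy is not a coboundary.

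\textbf{The main gap.} It is the sentence ``In that case $\omega$ is pulled back from $C$.'' The structure theorems you invoke describe the \emph{locus}. They give $\sN\cong h^*\sM\otimes\tau$, which is perfectly compatible with $c_1(\sN)=0$. They say nothing about which sections of $\Omega^1_X\otimes\sN$ exist, hence nothing about $\cF$. You need a statement at the point $\chi$ itself: the class of $\omega$ in $H^1(X,\mathbb{C}_\chi)$ must come from the (orbifold) base, so that $\cF$ is the foliation by fibres of $h$. This requires a genuine argument. Two options:
\begin{enumerate}
\item Use the Leray sequence $0\to H^1(C,h_*\mathbb{C}_\chi)\to H^1(X,\mathbb{C}_\chi)\to H^0(C,R^1h_*\mathbb{C}_\chi)$. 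Show the last term vanishes for non-torsion $\chi$: a rank one summand of the $\mathbb{Z}$-variation of Hodge structure $R^1h_*\mathbb{C}$ over the smooth locus of $h$ has finite monodromy. Then conclude with uniqueness of holomorphic representatives.
\item Do as the paper does and use the theorem of Artal Bartolo--Cogolludo--Matei. It says the full affine holonomy $\rho\colon\pi_1(X)\to\textup{Aff}(\mathbb{C})$, not only its linear part $\chi$, factors through an orbifold map to a curve. The developing map is then constant on fibres and $\cF$ is algebraically integrable.
\end{enumerate}

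\textbf{The list of pencils is incomplete.} This breaks your final contradiction as written.
\begin{itemize}
\item For this jump locus the target is always a curve. The alternative ``higher-dimensional variety of Albanese general type'' belongs to higher jump loci, and you never treat it anyway.
\item Torsion-translated components also come from fibrations over elliptic curves with multiple fibres, i.e. orbifold curves of genus $\ge 1$ with negative orbifold Euler characteristic.
\end{itemize}
In that case a nowhere vanishing $\omega$ does not force $h$ to be smooth. Near a fibre $h^*p_i=m_iD_i$ the foliation by fibres is still regular, with $\sN^\vee\cong h^*\Omega^1_C\big(\sum_i(m_i-1)D_i\big)$. Moreover $\deg T_C=0$ when $g(C)=1$, so your degree argument gives nothing.

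The contradiction must instead come from the orbifold canonical class. Numerically, $c_1(\sN)$ is $-\big(2g(C)-2+\sum_i(1-1/m_i)\big)$ times the fibre class. This is nonzero because the orbifold Euler characteristic is negative. The paper's last step is exactly this: $c_1(\sN)=0$ forces an orbifold base of Euler characteristic zero, for which $\sN\cong f^*T_Y$ is torsion.
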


\begin{proof}
Let $\nabla$ be a (holomorphic) unitary flat connection on $\sN$, and let 
\[\chi \colon \pi_1(X) \to \mathbb{C^*}
\] 
be its monodromy representation. Note that $\sN$ is torsion if and only if $\chi (\pi_1(X))$ is finite. We will denote by $\mathbb{C}_\chi$ the $\pi_1(X)$-module $\mathbb{C}$ equipped with the action defined by $\chi$.

Let now $\omega \in H^0(X,\Omega_X^1\otimes \sN)$ be a twisted $1$-form defining $\cF$. Then $\nabla^1(\omega)=0 \in H^0(X,\Omega_X^2\otimes \sN)$ by Hodge theory for unitary local systems. As a consequence, there exists 
an open covering $(U_i)_{i \in I}$ of $X$ as well as submersions $f_i \colon U_i \to \mathbb{C}$ and flat sections $s_i$ of $\sN|_{U_i}$ such that $\omega|_{U_i}=\dd f_i \otimes s_i$. The $f_i$ define $\cF$ and satisfy $f_i=a_{ij}f_j+b_{ij}$ on $U_i\cap U_j$ with $a_{ij} \in \mathbb{C}^*$ and $b_{ij} \in \mathbb{C}$. In other words, $\cF$ is transversely affine. Let $i_0 \in I$ and let
\[
\rho \colon \pi_1(X) \to \textup{Aff}(\mathbb{C})
\]
be the representation obtained by analytic continuation of $f_{i_0}$ along loops based at some point $x_0 \in U_{i_0}$, where $\textup{Aff}(\mathbb{C})$ is the group of affine automorphism of $\mathbb{C}$. There exists a $1$-cocycle $\tau \colon \pi_1(X) \to \mathbb{C}$ such that, for any $\gamma \in \pi_1(X)$ and any $z \in \mathbb{C}$, we have 
\[
\rho(\gamma)(z)=\chi^{-1}(\gamma)z+\tau(\gamma).
\]
Suppose that $[\tau]=0 \in H^1(\pi_1(X),\mathbb{C}_\chi)$. In other words, $\tau$ is a $1$-coboundary. Then $\rho$ fixes a point $z_0 \in \mathbb{C}$. This implies that $\cF$ has a developing map $X \to \mathbb{C}$, giving a contradiction since $X$ is compact by assumption. This shows that $[\tau] \neq 0 \in H^1(\pi_1(X),\mathbb{C}_\chi)$.

We argue by contradiction and assume that $\chi$ is not torsion. By \cite[Theorem 5.1]{bartolo_al_2013}, $\rho$ factors through a map $f \colon X \to Y$ to an orbifold $Y$ of dimension one. This implies that $\cF$ is algebraically integrable. Moreover, there is an identification $\sN \cong f^*T_Y$. It follows that either $Y$ is a (smooth) projective curve of genus $1$ or the coarse moduli space of $Y$ is isomorphic to $\mathbb{P}^1$ since $c_1(\sN)=0$ by assumption. In either case, $\sN$ is torsion. This gives a contradiction, finishing the proof of the theorem.
\end{proof}

\begin{question}\label{question:abundance_2}
Let $\cF$ be regular foliation on a complex projective manifold $X$ with normal bundle $\sN$. Suppose $c_1(\sN)=0$. One may ask if $\det \sN$ is torsion. 
\end{question}

\bibliographystyle{amsalpha}
\bibliography{foliation}

@UNPUBLISHED {church,
    AUTHOR = {Church, Benjamin},
     TITLE = {Nowhere vanishing 1-forms on varieties admitting a good minimal model},
      NOTE = {preprint {\tt arXiv:2410.22753}},
      YEAR = {2024},
 }

@UNPUBLISHED {hao_al,
    AUTHOR = {Hao, Feng and Wang, Zichang and Zhang, Lei},
     TITLE = {Good Minimal Models with Nowhere Vanishing Holomorphic 1-forms},
      NOTE = {Ann. Sc. Norm. Super. Pisa Cl. Sci., to appear},
 }

@article {bartolo_al_2013,
    AUTHOR = {Artal Bartolo, Enrique and Cogolludo-Agust\'in, Jos\'e{}
              Ignacio and Matei, Daniel},
     TITLE = {Characteristic varieties of quasi-projective manifolds and
              orbifolds},
   JOURNAL = {Geom. Topol.},
  FJOURNAL = {Geometry \& Topology},
    VOLUME = {17},
      YEAR = {2013},
    NUMBER = {1},
     PAGES = {273--309},
      ISSN = {1465-3060,1364-0380},
   MRCLASS = {14F35 (14B05 32S20 32S50 58K65)},
MRREVIEWER = {Dmitry\ Kerner},
       DOI = {10.2140/gt.2013.17.273},
       URL = {https://doi.org/10.2140/gt.2013.17.273},
}

@article {biswas_semistability,
    AUTHOR = {Biswas, Indranil},
     TITLE = {Semistability and restrictions of tangent bundle to curves},
   JOURNAL = {Geom. Dedicata},
  FJOURNAL = {Geometriae Dedicata},
    VOLUME = {142},
      YEAR = {2009},
     PAGES = {37--46},
      ISSN = {0046-5755,1572-9168},
   MRCLASS = {14F05 (32L10)},
MRREVIEWER = {Graeme\ Wilkin},
       DOI = {10.1007/s10711-009-9356-3},
       URL = {https://doi.org/10.1007/s10711-009-9356-3},
}

@article{iwai_matsumura_muller_2025,
author = {Iwai, Masataka and Matsumura, Shin-ichi and Müller, Niklas},
title = {Minimal projective varieties satisfying Miyaoka's equality},
journal = {Proceedings of the London Mathematical Society},
volume = {131},
number = {6},
pages = {e70104},
doi = {https://doi.org/10.1112/plms.70104},
url = {https://londmathsoc.onlinelibrary.wiley.com/doi/abs/10.1112/plms.70104},
eprint = {https://londmathsoc.onlinelibrary.wiley.com/doi/pdf/10.1112/plms.70104},
year = {2025}
}

@article {cao_guenancia_paun_2023,
    AUTHOR = {Cao, Junyan and Guenancia, Henri and P{\u a}un, Mihai},
     TITLE = {Variation of singular {K}\"ahler-{E}instein metrics: {K}odaira
              dimension zero},
      NOTE = {With an appendix by Valentino Tosatti},
   JOURNAL = {J. Eur. Math. Soc. (JEMS)},
  FJOURNAL = {Journal of the European Mathematical Society (JEMS)},
    VOLUME = {25},
      YEAR = {2023},
    NUMBER = {2},
     PAGES = {633--679},
      ISSN = {1435-9855,1435-9863},
   MRCLASS = {14J10 (14E30 14J32 32Q20)},
MRREVIEWER = {Ruadha\'i\ Dervan},
       DOI = {10.4171/jems/1184},
       URL = {https://doi-org.ezproxy.math.cnrs.fr/10.4171/jems/1184},
}

@article {langer_simpson,
    AUTHOR = {Langer, Adrian and Simpson, Carlos},
     TITLE = {Rank 3 rigid representations of projective fundamental groups},
   JOURNAL = {Compos. Math.},
  FJOURNAL = {Compositio Mathematica},
    VOLUME = {154},
      YEAR = {2018},
    NUMBER = {7},
     PAGES = {1534--1570},
      ISSN = {0010-437X,1570-5846},
   MRCLASS = {14F35 (14D06 14D07 14E20)},
MRREVIEWER = {Azniv\ Kasparian},
       DOI = {10.1112/s0010437x18007182},
       URL = {https://doi-org.ezproxy.math.cnrs.fr/10.1112/s0010437x18007182},
}

@article {CGGN22,
    AUTHOR = {Claudon, Beno\^{i}t and Graf, Patrick and Guenancia, Henri and
              Naumann, Philipp},
     TITLE = {K\"{a}hler spaces with zero first {C}hern class: {B}ochner
              principle, {A}lbanese map and fundamental groups},
   JOURNAL = {J. Reine Angew. Math.},
  FJOURNAL = {Journal f\"{u}r die Reine und Angewandte Mathematik. [Crelle's
              Journal]},
    VOLUME = {786},
      YEAR = {2022},
     PAGES = {245--275},
      ISSN = {0075-4102,1435-5345},
   MRCLASS = {53B35},
MRREVIEWER = {Marta\ Teofilova},
       DOI = {10.1515/crelle-2022-0001},
       URL = {https://doi.org/10.1515/crelle-2022-0001},
}

@article {druel_IMJ,
    AUTHOR = {Druel, St\'ephane},
     TITLE = {Projectively flat foliations},
   JOURNAL = {J. Inst. Math. Jussieu},
  FJOURNAL = {Journal of the Institute of Mathematics of Jussieu. JIMJ.
              Journal de l'Institut de Math\'ematiques de Jussieu},
    VOLUME = {24},
      YEAR = {2025},
    NUMBER = {3},
     PAGES = {763--812},
      ISSN = {1474-7480,1475-3030},
   MRCLASS = {37F75 (14E20 14E30 32Q26 32Q30)},
       DOI = {10.1017/S1474748024000574},
       URL = {https://doi.org/10.1017/S1474748024000574},
}

@article {hoering_split,
    AUTHOR = {H\"oring, Andreas},
     TITLE = {Uniruled varieties with split tangent bundle},
   JOURNAL = {Math. Z.},
  FJOURNAL = {Mathematische Zeitschrift},
    VOLUME = {256},
      YEAR = {2007},
    NUMBER = {3},
     PAGES = {465--479},
      ISSN = {0025-5874,1432-1823},
   MRCLASS = {32J27 (32Q30)},
MRREVIEWER = {M.\ G.\ Soares},
       DOI = {10.1007/s00209-006-0072-5},
       URL = {https://doi.org/10.1007/s00209-006-0072-5},
}

@book {molino,
    AUTHOR = {Molino, Pierre},
     TITLE = {Riemannian foliations},
    SERIES = {Progress in Mathematics},
    VOLUME = {73},
      NOTE = {Translated from the French by Grant Cairns,
              With appendices by Cairns, Y. Carri\`ere, \'E. Ghys, E. Salem
              and V. Sergiescu},
 PUBLISHER = {Birkh\"auser Boston, Inc., Boston, MA},
      YEAR = {1988},
     PAGES = {xii+339},
      ISBN = {0-8176-3370-7},
   MRCLASS = {53C12 (57R30 58A35 58F18)},
MRREVIEWER = {James\ J.\ Hebda},
       DOI = {10.1007/978-1-4684-8670-4},
       URL = {https://doi.org/10.1007/978-1-4684-8670-4},
}

@book {moerdijk_mrcun_foliations,
    AUTHOR = {Moerdijk, I. and Mr{\v{c}}un, J.},
     TITLE = {Introduction to foliations and {L}ie groupoids},
    SERIES = {Cambridge Studies in Advanced Mathematics},
    VOLUME = {91},
 PUBLISHER = {Cambridge University Press, Cambridge},
      YEAR = {2003},
     PAGES = {x+173},
      ISBN = {0-521-83197-0},
   MRCLASS = {58H05 (17B99 57R30)},
MRREVIEWER = {Jan\ Kubarski},
       DOI = {10.1017/CBO9780511615450},
       URL = {https://doi.org/10.1017/CBO9780511615450},
}

@article {aubin_KE,
    AUTHOR = {Aubin, Thierry},
     TITLE = {\'{E}quations du type {M}onge-{A}mp\`ere sur les
              vari\'{e}t\'{e}s k\"{a}hleriennes compactes},
   JOURNAL = {C. R. Acad. Sci. Paris S\'{e}r. A-B},
  FJOURNAL = {Comptes Rendus Hebdomadaires des S\'{e}ances de l'Acad\'{e}mie
              des Sciences. S\'{e}ries A et B},
    VOLUME = {283},
      YEAR = {1976},
    NUMBER = {3},
     PAGES = {Aiii, A119--A121},
      ISSN = {0151-0509},
   MRCLASS = {58G99 (32C10 35J60)},
}

@article {Yau_KE,
    AUTHOR = {Yau, Shing Tung},
     TITLE = {Calabi's conjecture and some new results in algebraic
              geometry},
   JOURNAL = {Proc. Nat. Acad. Sci. U.S.A.},
  FJOURNAL = {Proceedings of the National Academy of Sciences of the United
              States of America},
    VOLUME = {74},
      YEAR = {1977},
    NUMBER = {5},
     PAGES = {1798--1799},
      ISSN = {0027-8424},
   MRCLASS = {53C55 (14M20 32C10)},
       DOI = {10.1073/pnas.74.5.1798},
       URL = {https://doi.org/10.1073/pnas.74.5.1798},
}

@article {druel_proj_flat,
    AUTHOR = {Druel, St\'ephane},
     TITLE = {Projectively flat log smooth pairs},
   JOURNAL = {Ann. Fac. Sci. Toulouse Math. (6)},
  FJOURNAL = {Annales de la Facult\'e{} des Sciences de Toulouse.
              Math\'ematiques. S\'erie 6},
    VOLUME = {33},
      YEAR = {2024},
    NUMBER = {3},
     PAGES = {611--645},
      ISSN = {0240-2963,2258-7519},
   MRCLASS = {14E20 (14E30 32Q26 32Q30 53B10)},
MRREVIEWER = {Rong\ Du},
       DOI = {10.5802/afst.1783},
       URL = {https://doi.org/10.5802/afst.1783},
}

@article {kollar_sh_inventiones,
    AUTHOR = {Koll{\'a}r, J{\'a}nos},
     TITLE = {Shafarevich maps and plurigenera of algebraic varieties},
   JOURNAL = {Invent. Math.},
  FJOURNAL = {Inventiones Mathematicae},
    VOLUME = {113},
      YEAR = {1993},
    NUMBER = {1},
     PAGES = {177--215},
      ISSN = {0020-9910},
   MRCLASS = {14E20 (14E30 14J10)},
MRREVIEWER = {Alessio Corti},
       DOI = {10.1007/BF01244307},
       URL = {https://doi.org/10.1007/BF01244307},
}

@article {CCE15,
    AUTHOR = {Campana, Fr\'{e}deric and Claudon, Beno\^{\i}t and Eyssidieux,
              Philippe},
     TITLE = {Repr\'{e}sentations lin\'{e}aires des groupes k\"{a}hl\'{e}riens:
              factorisations et conjecture de {S}hafarevich lin\'{e}aire},
   JOURNAL = {Compos. Math.},
  FJOURNAL = {Compositio Mathematica},
    VOLUME = {151},
      YEAR = {2015},
    NUMBER = {2},
     PAGES = {351--376},
      ISSN = {0010-437X},
   MRCLASS = {32Q15 (14D07 14E20 14F35 32Q30)},
       DOI = {10.1112/S0010437X14007751},
       URL = {https://doi.org/10.1112/S0010437X14007751},
}

@article {jahnke_radloff_13,
    AUTHOR = {Jahnke, Priska and Radloff, Ivo},
     TITLE = {Semistability of restricted tangent bundles and a question of
              {I}. {B}iswas},
   JOURNAL = {Internat. J. Math.},
  FJOURNAL = {International Journal of Mathematics},
    VOLUME = {24},
      YEAR = {2013},
    NUMBER = {1},
     PAGES = {1250122, 15},
      ISSN = {0129-167X},
   MRCLASS = {32Q26 (14E20 14E30 32J27 32L10)},
       DOI = {10.1142/S0129167X12501224},
       URL = {https://doi.org/10.1142/S0129167X12501224},
}

@article {hu_log_abundance,
    AUTHOR = {Hu, Zhengyu},
     TITLE = {Log canonical pairs over varieties with maximal {A}lbanese
              dimension},
   JOURNAL = {Pure Appl. Math. Q.},
  FJOURNAL = {Pure and Applied Mathematics Quarterly},
    VOLUME = {12},
      YEAR = {2016},
    NUMBER = {4},
     PAGES = {543--571},
      ISSN = {1558-8599},
   MRCLASS = {14E30},
MRREVIEWER = {Kenta Hashizume},
       DOI = {10.4310/PAMQ.2016.v12.n4.a5},
       URL = {https://doi.org/10.4310/PAMQ.2016.v12.n4.a5},
}

@article {GKP_proj_flat_JEP,
    AUTHOR = {Greb, Daniel and Kebekus, Stefan and Peternell, Thomas},
     TITLE = {Projectively flat klt varieties},
   JOURNAL = {J. \'{E}c. polytech. Math.},
  FJOURNAL = {Journal de l'\'{E}cole polytechnique. Math\'{e}matiques},
    VOLUME = {8},
      YEAR = {2021},
     PAGES = {1005--1036},
      ISSN = {2429-7100},
   MRCLASS = {14E30 (14E20 32Q26 32Q30 53B10)},
       DOI = {10.5802/jep.164},
       URL = {https://doi.org/10.5802/jep.164},
}

@book {kobayashi_diff_geom_vb,
    AUTHOR = {Kobayashi, Shoshichi},
     TITLE = {Differential geometry of complex vector bundles},
    SERIES = {Publications of the Mathematical Society of Japan},
    VOLUME = {15},
      NOTE = {Kan\^{o} Memorial Lectures, 5},
 PUBLISHER = {Princeton University Press, Princeton, NJ; Princeton
              University Press, Princeton, NJ},
      YEAR = {1987},
     PAGES = {xii+305},
      ISBN = {0-691-08467-X},
   MRCLASS = {53C55 (32-02 32L05 32L10 32L20)},
       DOI = {10.1515/9781400858682},
       URL = {https://doi.org/10.1515/9781400858682},
}

@article {shokurov_log_flips,
    AUTHOR = {Shokurov, V. V.},
     TITLE = {Three-dimensional log perestroikas},
   JOURNAL = {Izv. Ross. Akad. Nauk Ser. Mat.},
  FJOURNAL = {Rossi\u{\i}skaya Akademiya Nauk. Izvestiya. Seriya
              Matematicheskaya},
    VOLUME = {56},
      YEAR = {1992},
    NUMBER = {1},
     PAGES = {105--203},
      ISSN = {1607-0046},
   MRCLASS = {14E05 (14E35)},
       URL = {https://doi.org/10.1070/IM1993v040n01ABEH001862},
}

@article {GL,
    AUTHOR = {Gongyo, Yoshinori and Lehmann, Brian},
     TITLE = {Reduction maps and minimal model theory},
   JOURNAL = {Compos. Math.},
  FJOURNAL = {Compositio Mathematica},
    VOLUME = {149},
      YEAR = {2013},
    NUMBER = {2},
     PAGES = {295--308},
      ISSN = {0010-437X},
   MRCLASS = {14E30},
}

@incollection {andreatta_wisniewski_view,
    AUTHOR = {Andreatta, Marco and Wi{\'s}niewski, Jaros{\l}aw A.},
     TITLE = {A view on contractions of higher-dimensional varieties},
 BOOKTITLE = {Algebraic geometry---{S}anta {C}ruz 1995},
    SERIES = {Proc. Sympos. Pure Math.},
    VOLUME = {62},
     PAGES = {153--183},
 PUBLISHER = {Amer. Math. Soc., Providence, RI},
      YEAR = {1997},
   MRCLASS = {14E30 (14J45)},
MRREVIEWER = {M. Kh. Gizatullin},
}

@article {atiyah57,
    AUTHOR = {Atiyah, M. F.},
     TITLE = {Complex analytic connections in fibre bundles},
   JOURNAL = {Trans. Amer. Math. Soc.},
  FJOURNAL = {Transactions of the American Mathematical Society},
    VOLUME = {85},
      YEAR = {1957},
     PAGES = {181--207},
      ISSN = {0002-9947},
   MRCLASS = {53.3X},
MRREVIEWER = {F. Hirzebruch},
}

@article {bchm,
    AUTHOR = {Birkar, Caucher and Cascini, Paolo and Hacon, Christopher D.
              and McKernan, James},
     TITLE = {Existence of minimal models for varieties of log general type},
   JOURNAL = {J. Amer. Math. Soc.},
  FJOURNAL = {Journal of the American Mathematical Society},
    VOLUME = {23},
      YEAR = {2010},
    NUMBER = {2},
     PAGES = {405--468},
      ISSN = {0894-0347},
   MRCLASS = {14E30 (14E05)},
MRREVIEWER = {Mark Gross},
       DOI = {10.1090/S0894-0347-09-00649-3},
       URL = {http://dx.doi.org/10.1090/S0894-0347-09-00649-3},
}

@article {druel_bbcd2,
    AUTHOR = {Druel, St\'ephane},
     TITLE = {Some remarks on regular foliations with numerically trivial canonical class},
   JOURNAL = {EPIGA},
  FJOURNAL = {EPIGA},
    VOLUME = {1},
      YEAR = {2017},
}

@article {hacon_mckernan,
    AUTHOR = {Hacon, Christopher D. and McKernan, James},
     TITLE = {On {S}hokurov's rational connectedness conjecture},
   JOURNAL = {Duke Math. J.},
  FJOURNAL = {Duke Mathematical Journal},
    VOLUME = {138},
      YEAR = {2007},
    NUMBER = {1},
     PAGES = {119--136},
      ISSN = {0012-7094},
     CODEN = {DUMJAO},
   MRCLASS = {14E30 (14E05 14J45)},
MRREVIEWER = {Mihnea Popa},
       DOI = {10.1215/S0012-7094-07-13813-4},
       URL = {http://dx.doi.org/10.1215/S0012-7094-07-13813-4},
}

@article {kawamata,
    AUTHOR = {Kawamata, Y.},
     TITLE = {Pluricanonical systems on minimal algebraic varieties},
   JOURNAL = {Invent. Math.},
  FJOURNAL = {Inventiones Mathematicae},
    VOLUME = {79},
      YEAR = {1985},
    NUMBER = {3},
     PAGES = {567--588},
      ISSN = {0020-9910},
     CODEN = {INVMBH},
   MRCLASS = {14C20 (14J10)},
       DOI = {10.1007/BF01388524},
       URL = {http://dx.doi.org/10.1007/BF01388524},
}

@article{wisn_crelle,
    author = {J.~A.~Wi{\'s}niewski},
    title  = {On Contractions of Extremal Rays of {F}ano manifolds},
    journal = {J.~Reine Angew.~Math.},
    year = 1991,
    volume = 417,
    pages = {141--157}
 }
\end{document}